\theoremstyle{definition}
\newtheorem{theorem}{Theorem}[section]
\newtheorem{prop}[theorem]{Proposition}
\newtheorem{definition}[theorem]{Definition}
\newtheorem{remark}[theorem]{Remark}
\newtheorem{lemma}[theorem]{Lemma}
\newtheorem{coro}[theorem]{Corollary}
\numberwithin{equation}{section}
\newcommand{\abs}[1]{\left\lvert#1\right\rvert}
\newcommand{\norm}[1]{\left\|#1\right\|}
\newcommand{\R}{\mathbb R}
\newcommand{\N}{\mathbb N}
\newcommand{\BE}{\mathbb{E}}
\newcommand{\pv}{\mathrm{p.v.}}
\renewcommand{\epsilon}{\varepsilon}
\renewcommand{\S}{\mathcal S}
\newcommand{\dx}{\mathrm d}
\newcommand{\id}{\mathrm{Id}}
\newcommand{\tr}{\mathrm{Tr}}
\newcommand{\T}{\mathcal T}
\newcommand{\F}{\mathcal F}
\newcommand{\A}{\mathcal A}
\newcommand{\B}{\mathcal B}
\renewcommand{\phi}{\varphi}
\DeclareMathOperator{\supp}{supp}
\begin{document}
\allowdisplaybreaks

\title[Fractional Kolmogorov equation with variable density]{Kinetic maximal $L^p_\mu(L^p)$-regularity for the fractional Kolmogorov equation with variable density}

\date{\today}
\author{Lukas Niebel}
\email{lukas.niebel@uni-ulm.de}
\thanks{The  author is supported by a graduate
scholarship (''Landesgraduiertenstipendium'') granted by the State of Baden-Wuerttemberg, Germany (grant number 1902 LGFG-E)}
\address[Lukas Niebel]{Institut f\"ur Angewandte Analysis, Universit\"at Ulm, Helmholtzstra\ss{}e 18, 89081 Ulm, Germany.}

\maketitle

\begin{abstract}
	We consider the Kolmogorov equation, where the right-hand side is given by a non-local integro-differential operator comparable to the fractional Laplacian in velocity with possibly time, space and velocity dependent density. We prove that this equation admits kinetic maximal $L^p_\mu$-regularity under suitable assumptions on the density and on $p$ and $\mu$. We apply this result to prove short-time existence of strong $L^p_\mu$-solutions to quasilinear non-local kinetic partial differential equations.
\end{abstract}

\vspace{1em}
{\centering \textbf{AMS subject classification.} 35K59, 35K65, 45K05 \par}
\vspace{1em}
\textbf{Keywords.} kinetic maximal $L^p$-regularity, fractional Kolmogorov equation, non-local integro-differential kinetic equations, quasilinear kinetic equations

\section{Introduction}

In Carleman coordinates the Boltzmann equation can be written as the sum of a quasilinear non-local integro-differential operator in velocity and a lower order term \cite{villani}. The nonlinearity comes from the fact that the density of this non-local operator operator depends on the solution $u$ itself. In particular, this density depends on $t,x,v$. In recent years the prototype for the Boltzmann equation, the fractional Kolmogorov equation, has been studied in many directions. In particular $L^p$-estimates for the fractional Kolmogorov equation were proven in \cite{huangpriola_degenerate_nonlocal} and \cite{chen_lp-maximal_2018}. 

In this note we extend these results to study a family of kinetic partial differential equations related to the fractional Kolmogorov equation with variable density of the type
\begin{equation} 	
\begin{cases}
		\partial_t u +v \cdot \nabla_x u = A^m_{t,x,v} u + B_{t,x,v}u+cu +f \\
		u(0) = g 
	\end{cases}\label{eq:frackolvarcoeff}
\end{equation}
with $t \in [0,T]$, $x,v \in \R^n$, where $A^m_{t,x,v}$ is a non-local integro-differential operator of order $\beta \in (0,2)$ in velocity of the form
\begin{equation*}
	[A^m_{t,x,v} u](t,x,v) = \pv \int_{\R^n} ({u(t,x,v+h)-u(t,x,v)}) \frac{m(t,x,v,h)}{\abs{h}^{n+\beta}} \dx h,
\end{equation*}
with suitable assumptions made on the density $m$. The operator $B_{t,x,v}$ denotes a lower-order term acting in $x$ and $v$ and $c $ is assumed to be a bounded function. 

We are going to tackle this problem by means of the concept of kinetic maximal $L^p_\mu$-regularity, recently introduced in \cite{niebel_kinetic_nodate,niebel_kinetic_nodate-1}. Shortly speaking, kinetic maximal $L^p_\mu(L^q)$-regularity, with $p,q \in (1,\infty)$ and $\mu \in (1/p,1]$, characterizes the data $f$ and $g$ in terms of function spaces such that $\partial_t u +v \cdot \nabla_x u \in L^p_\mu((0,T);L^q(\R^{2n}))$ and $u \in L^p_\mu((0,T); H^{\beta,q}_v(\R^{2n}))$. In particular, this allows to strictly separate the study of the inhomogeneous problem with $g = 0$ and of the initial value problem with $f = 0$. The initial value problem corresponding to \eqref{eq:frackolvarcoeff} can be studied by an analysis of the trace space which has already been done in \cite{niebel_kinetic_nodate-1}.   

The more involved task is to study $L^p_\mu(L^p)$-estimates for the inhomogeneous problem. Let us comment on the existing literature concerning such estimates. For $m = m(h)$ the desired $L^p(L^p)$-estimates were provided in \cite{chen_lp-maximal_2018} and \cite{huangpriola_degenerate_nonlocal}. Moreover, the case $m = m(t,h)$ has also been considered in \cite{chen_lp-maximal_2018} under the assumption that the $t$-dependence is merely measurable. In the case of the fractional Laplacian, i.e. $m = 1$, these results were extended to $L^p_\mu(L^q)$-estimates in \cite{niebel_kinetic_nodate-1}. A complete characterization of weak $L^2$-solutions to the fractional Kolmogorov equation, i.e. $m = 1$, has been given first in \cite{niebel_kinetic_nodate-1}. For $p =q= 2$ and $\mu = 1$ the operator $-m(t,x,v)(-\Delta_v)^\frac{\beta}{2} $ has been studied in \cite{alexandre_fractional_2012} where, under restrictive assumptions on the coefficient, the author was able to prove $L^2$-estimates. We note that it is easy to show that kinetic maximal $L^p_\mu(L^q)$-regularity is satisfied if $m$ is bounded and $(t,x,v) \mapsto m(t,x+tv,v)$ is uniformly continuous by the argument outlined in \cite[Remark 8.3]{niebel_kinetic_nodate-1}. We want to also mention \cite{limitweight}, where an $L^2$-analysis under the assumption of growth bounds on $m = m(v)$ is performed.  

The case that $m$ depends on $v$ and $h$ simultaneously is the most interesting. This is for example the case for the frozen nonlinearities in the Boltzmann equation (see \cite{villani}). This situation is also the most complicated one. As mentioned above the cases $m = m(t,x,v)$, $m = m(h)$ and $m = m(t,h)$ can be studied rather easily or have been considered before. However, the combination $m = m(t,x,v,h)$ turns out to be much more involved. Let us further note that, when thinking about quasilinear problems, it is important to allow for dependence in $x$, too. This is due to the fact that when we have an operator $A^{m(u)}_{t,x,v}$ and freeze it at the initial value $g$ the density $m(u)$ could also depend on $x$.

Let us mention the related result in the local setting, i.e. when $\beta = 2$. Here, one is interested in $L^p_\mu(L^q)$-solutions to the Cauchy problem
\begin{equation*}
	\begin{cases}
		\partial_t u +v \cdot \nabla_x u = a \colon \nabla^2 u + b \cdot \nabla_v u + cu +f \\
		u(0) = g.
	\end{cases}
\end{equation*}
Under the assumption that $a(t,x+tv,v)$ is bounded and uniformly continuous and if $b $ and $c$ are measurable and bounded it has been proven first in \cite{bramanti_global_2013} for $p = q$ and $\mu = 1$ that this equation admits kinetic maximal regularity. This has been extended to the general case $p,q \in (1,\infty)$ and $\mu \in (1/p,1]$ in \cite{niebel_kinetic_nodate-1}. In both articles the proof is essentially based on a freezing the coefficients argument, where one compares the frozen operator with the Laplacian in velocity for which kinetic maximal $L^p_\mu$-regularity is known. 

When studying kinetic equations it has proven to be a good idea to adapt techniques from the parabolic setting. Considering parabolic non-local equations with variable coefficients the state-of-the-art techniques go back to \cite{mikuold} and \cite{mikunew}. It turns out that the freezing the coefficients argument become much more complicated in the non-local setting. Even in the parabolic case additional assumptions on the regularity of the density function $m$ and on the exponent of integrability $p$ must be made. Indeed, let us consider the problem
\begin{equation} \label{eq:parvarcoeff}
	\begin{cases}
		\partial_t u = \pv \int_{\R^n} \frac{u(t,v+h)-u(t,v)}{\abs{h}^{n+\beta}} m(t,v,h) \dx h +f \\
		u(0) = 0,
	\end{cases}
\end{equation}
with $u \colon [0,T] \times \R^n \to \R$. A result of \cite{mikunew} is the following. Let $\alpha \in (0,1)$ and $p > n/\alpha$. If $\lambda \le m(t,v,h) \le K$ for some constants $0 < \lambda < K$,
\begin{equation*}
	\sup_{t\in [0,T],v,v',h \in \R^n} \frac{\abs{m(t,v,h)-m(t,v',h)}}{\abs{v-v'}^{\alpha_0}} < \infty
\end{equation*}
for some $\alpha_0 \in (\alpha,1)$ and if $m$ is symmetric in $h$, then, the equation \eqref{eq:parvarcoeff} admits maximal $L^p$-regularity, i.e. for all $f \in L^p((0,T);L^p(\R^n))$ there exists a unique solution $ u \in H^{1,p}((0,T);L^p(\R^n)) \cap L^p((0,T);H^{\beta,p}(\R^n))$ of the problem \eqref{eq:parvarcoeff}. The aim of this note is to extend the results of \cite{mikunew} to the kinetic setting. We are going to see that our condition on $m$ will again be closely related to regularity properties along the characteristics $(t,x,v) \mapsto (t,x+tv,v)$. For the precise statement we refer to Section \ref{sec:kinmaxreg}. 

We want to emphasize that the theory of kinetic maximal regularity allows to study quasilinear problems of the form 
\begin{equation} \label{eq:quasilinintro}
	\begin{cases}
		\partial_t u +v \cdot \nabla_x u = A^{m(u)}_{t,x,v} u \\
		u(0) = g
	\end{cases}
\end{equation}
for some function $m = m(u)$. When $m(u)(t,x) = 1+ \int_{\R^n} u(t,x,v) \mu(v) \dx v$ for some positive weight $\mu \in L^1(\R^n)$ we are able to prove short time existence of solutions to \eqref{eq:quasilinintro} for a large class of initial values. An equation similar to the second example has been studied in \cite{toymodelitaly,toymodel,wangnonlin} in the local setting, i.e $\beta = 2$. We are able to prove long time existence for positive initial values. 

In Section \ref{sec:kinmaxreg} we recall the definition of kinetic maximal $L^p_\mu$-regularity and consider first the simpler situation $m = m(t,h)$. Moreover, we present the main results of this note and explain their relevance. Section \ref{sec:proofmain} is dedicated to the proof of the main results. In Section \ref{sec:quasi} we study quasilinear non-local kinetic equations. In the Appendix we collect some estimates for non-local integro-differential operators as for example the one in equation \eqref{eq:parvarcoeff}.

In our calculations we denote by the letter $C$ a universal positive constant which can change from line to line. We write the quantities on which the constant $C$ depends in brackets. 

\section{Kinetic maximal $L^p_\mu$-regularity} \label{sec:kinmaxreg}

Kinetic maximal $L^p_\mu$-regularity was introduced in \cite{niebel_kinetic_nodate-1}. We give a short overview of the definition and important properties. For more information we refer to the elaborations in \cite{niebel_kinetic_nodate-1}. Here, we only consider the case $s = 0$, i.e. the case of strong solutions. Moreover, we are only able to treat the kinetic maximal regularity of \eqref{eq:frackolvarcoeff} for $p = q$, that is when the base space is given by $L^p(\R^{2n})$. Kinetic maximal regularity can also be defined for different exponents of integrability $p$ in time $t$ and $q$ in space and velocity $(x,v)$. This property holds true for example for the (fractional) Kolmogorov equation.

Let $\beta \in (0,2]$, $r \in \R$, $p \in (1, \infty)$, $\mu \in \left( \frac{1}{p},1 \right]$ and $T \in (0,\infty)$. We introduce the space
\begin{equation*}
	X_\beta^{r,p} =  \left\{ f \in \S'(\R^{2n}) \; \colon \; \F^{-1} \left( \left(  (1+\abs{k}^2)^\frac{\beta}{2(\beta+1)}+(1+\abs{\xi}^{2})^\frac{\beta}{2} \right)^{r} \F(f)(k,\xi) \right) \in L^p(\R^{2n})   \right\}
\end{equation*}
equipped with the respective norm $\norm{\cdot}_{X_\beta^{r,p}}$. In particular, $X_\beta^{0,p} = L^p(\R^{2n})$ and $X_\beta^{r,p} = H_x^{r\frac{\beta}{\beta+1},p}(\R^{2n}) \cap H_v^{r\beta,p}(\R^{2n})$ for all $r \ge 0$. 

 Given any Banach space $X$, $p \in (1,\infty)$, $T \in (0,\infty]$ and $\mu \in (1/p,1]$ we define the Lebesgue space $L_\mu^p(X)$ with temporal weight $t^{1-\mu}$ as
\begin{equation*}
	L_\mu^p((0,T);X) = \left\{ u \colon (0,T) \to X \colon u \text{ measurable and } \int_0^T t^{p-p\mu} \norm{u(t)}_X^p \dx t < \infty \right\}.
\end{equation*}
Equipped with the norm $\norm{u}_{p,\mu,X}^p = \int_0^T t^{p-p\mu} \norm{u(t)}_X^p \dx t$ the vector space $L_\mu^p((0,T);X)$ is a Banach space. The Lebesgue space $L^p_\mu ((0,T);L^p(\R^{2n}))$ is our main interest, we abbreviate $\norm{\cdot}_{p,\mu,L^p} = \norm{\cdot}_{p,\mu}$. Let
\begin{equation*}
	\T^{p}_\mu((0,T);L^p(\R^{2n})) := \{ u\in L^p_\mu ((0,T);L^p(\R^{2n})) \;  \colon \; \partial_t u +v \cdot \nabla_x u\in L^p_\mu ((0,T);L^p(\R^{2n})) \}
\end{equation*}
equipped with the norm $\norm{u}_{\T^p_\mu(L^p(\R^{2n}))} = \norm{u}_{p,\mu}+ \norm{\partial_t u +v\cdot \nabla_x u}_{p,\mu}$. If $\mu = 1$ we drop the subscript in our notation. 

To define kinetic maximal $L^p_\mu(L^p)$-regularity for a family of operators $A(t) = (A(t))_{t \in [0,T]}$ with constant domain $D(A) \subset X_\beta^{1,q}$ we need to fix some further notations and conventions. We assume that $D(A)$ is equipped with a norm equivalent to the graph norm of $A(0)$. Let
\begin{equation*}
	{\BE}_\mu(0,T) := {\BE}_\mu((0,T);L^p(\R^{2n})) := {\T}^{p}_\mu((0,T);L^p(\R^{2n})) \cap L_\mu^p((0,T);D(A)).
\end{equation*}
As the embedding $\T_\mu^{p}((0,T);L^p(\R^{2n})) \hookrightarrow C([0,T];L^p(\R^{2n}))$ holds continuously the trace space 
\begin{equation*}
	X_{\gamma,\mu} := \tr(\BE_\mu((0,T);L^p(\R^{2n})))
\end{equation*}
is well-defined. The trace space is equipped with the norm
\begin{equation*}
	\norm{g}_{X_{\gamma,\mu}} := \inf \{ \norm{u}_{\BE_\mu(0,T)} \colon u(0) =  g, u \in \BE_\mu(0,T) \}.
\end{equation*}
The subscript $0$ in ${_0 \BE}_\mu(0,T)$ denotes the subspace of all functions in ${ \BE}_\mu(0,T)$ with vanishing trace at time $t = 0$.

\begin{definition}
	Let $\beta \in (0,2]$, $p \in (1,\infty)$, $\mu \in (1/p,1]$ and $T \in (0,\infty)$. We assume $A(t) = (A(t))_{t \in [0,T]} \colon D(A) \to L^p(\R^{2n})$, to be a family of operators acting on functions $u \in D(A) 
	\subset X_\beta^{1,p} $ such that 
	\begin{equation*}
		t \mapsto A(t) \in L^1((0,T);\B(D(A);L^p(\R^{2n}))) \cap \B(L^p_\mu((0,T);D(A));L^p_\mu((0,T);L^p(\R^{2n}))).
	\end{equation*}
	 We say that the family of operators $A(t) = (A(t))_{t \in [0,T]}$ admits \textit{kinetic maximal $L^p_\mu(L^p)$-regularity} if for all $f \in L^p_\mu((0,T);L^p(\R^{2n}))$ there exists a unique distributional solution $u \in  {_0 \BE}_\mu(0,T) $ of the equation
		\begin{equation} \label{eq:kinacp}
		\begin{cases}
			\partial_t u + v\cdot \nabla_x u - A(t)u = f, \quad t \in (0,T) \\
			u(0) = 0.
		\end{cases}
	\end{equation}
\end{definition}

It is shown in \cite{niebel_kinetic_nodate-1} that the fractional Laplacian $-(-\Delta_v)^\frac{\beta}{2}$ admits kinetic maximal $L^p_\mu(L^q)$-regularity. Let us now turn to more general non-local integro-differential operators. We introduce some notation first. Given a measurable function $m \colon [0,T] \times \R^{3n} \to \R $ and a smooth function $u \colon [0,T] \times \R^{2n} \to \R$ we define
\begin{equation*}
	[A^mu](t,x,v)=[A^m_{s,y,w} u](t,x,v) = \pv \int_{\R^n} ({u(t,x,v+h)-u(t,x,v)}) \frac{m(s,y,w,h)}{\abs{h}^{n+\beta}} \dx h
\end{equation*} 
for $s,y,w \in [0,T] \times \R^{2n}$. By $D_x^r = (-\Delta_x)^{r/2}$ and $D_v^r = (-\Delta_v)^{r/2}$ we denote the fractional laplacian in $x$ and $v$ respectively. For functions $m$, which are symmetric in $h$ we may write
\begin{equation*}
	[A^m_{s,y,w}u](t,x,v) = \int_{\R^{n}} \left[ u(t,x,v+h)-u(t,x,v)- \chi_\beta(h) \langle \nabla_v u(t,x,v) , h \rangle  \right] \frac{m(s,y,w,h)}{\abs{h}^{n+\beta}} \dx h,
\end{equation*}
for all $(t,x,v) \in [0,T] \times \R^{2n}$, where $\chi_\beta(h) = \mathds{1}_{\beta >1}+\mathds{1}_{\beta = 1}\mathds{1}_{\abs{h} \le 1}$ as in \cite{mikunew}.

 By the calligraphic letter $\A_{s,y,w}^m(u,g)$ we denote the commutator term given by 
 \begin{equation*}
 	[\A_{s,y,w}^m(u,g)](v) = \int_{\R^{2n}} \left[ u(v+h)-u(v) \right]\left[ g(v+h)-g(v) \right]\frac{m(s,y,w,h)}{\abs{h}^{n+\beta}} \dx h
 \end{equation*} 
 for sufficiently smooth functions $u,g$. In particular, we have $A_{s,y,w}^m(u g) = g A_{s,y,w}^m(u)+ \A_{s,y,w}^m(u,g)+uA_{s,y,w}^m(g)$.
 
Before thinking about kinetic maximal regularity we need to understand the operator $A_{t,x,v}^m$ better. In particular, we are interested in its domain. The following result seems very natural but is not straightforward to prove. 

\begin{lemma} \label{lem:domAm}
	Let $m \in L^\infty([0,T] \times \R^{3n})$ be a measurable function, symmetric in $h$ such that $\lambda \le m(t,x,v,h) \le K$ for some constants $0 < \lambda < K$. Furthermore, let $\alpha < \alpha_0 < 1$ and $p>n/\alpha$. If $m$ is $\alpha_0$-H\"older continuous in $v$ uniformly in $t,x,h$, i.e. 
\begin{equation*}
	C_0:= \sup_{t \in [0,T], x,h,v,v' \in \R^n} \frac{\abs{m(t,x,v,h)-m(t,x,v',h)}}{\abs{v-v'}^{\alpha_0}} < \infty, 
\end{equation*}
then, we have $D(A_{t,x,v}^m) = H^{\beta,p}_v(\R^{2n})$.  
\end{lemma}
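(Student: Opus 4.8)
The easy direction is $H^{\beta,p}_v(\mathbb R^{2n}) \subset D(A^m_{t,x,v})$: since $m$ is bounded and symmetric in $h$, the standard estimates for non-local operators (which the paper defers to its Appendix) give $\norm{A^m_{t,x,v} u}_{L^p} \le C \norm{u}_{H^{\beta,p}_v}$ for Schwartz functions $u$, and then one extends by density, noting that $H^{\beta,p}_v(\mathbb R^{2n})$ is the domain of $D^\beta_v$ and the operator acts only in the $v$-variable with the $(t,x)$-dependence entering only as bounded parameters. So the whole content is the reverse inclusion $D(A^m_{t,x,v}) \subset H^{\beta,p}_v(\mathbb R^{2n})$, i.e. an a priori estimate $\norm{D^\beta_v u}_{L^p} \le C\big(\norm{A^m_{t,x,v} u}_{L^p} + \norm{u}_{L^p}\big)$.

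For the a priori estimate I would adapt the freezing-the-coefficients / perturbation scheme of Mikulevičius--Pragarauskas to the present setting, working at a fixed $(t,x)$ throughout (so effectively in the purely non-local $v$-problem with $m = m(v,h)$, $\alpha_0$-Hölder in $v$). Pick a reference point $v_0$ and split
\begin{equation*}
 A^{m(\cdot)}_{t,x,v} u(v) = A^{m(t,x,v_0,\cdot)}_{t,x,v_0} u(v) + \int_{\mathbb R^n}\big(u(v+h)-u(v)-\chi_\beta(h)\langle \nabla_v u(v),h\rangle\big)\frac{m(t,x,v,h)-m(t,x,v_0,h)}{\abs h^{n+\beta}}\,\dx h.
\end{equation*}
The frozen operator $A^{m(t,x,v_0,\cdot)}$ has a homogeneous, non-degenerate, symmetric kernel of order $\beta$, hence is a Fourier multiplier comparable to $(-\Delta_v)^{\beta/2}$, and by the known $L^p$-theory (Mikhlin-type multiplier theorems for such operators, or comparison with the fractional Laplacian as in the cited references) one gets $\norm{D^\beta_v u}_{L^p} \le C\norm{A^{m(t,x,v_0,\cdot)} u}_{L^p}$ with $C$ uniform in the parameters by the uniform ellipticity $\lambda \le m \le K$. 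The remainder term, using the $\alpha_0$-Hölder bound $\abs{m(t,x,v,h)-m(t,x,v_0,h)} \le C_0\abs{v-v_0}^{\alpha_0}$, is estimated in $L^p$ by $C_0$ times a weighted norm of $u$ that gains $\alpha_0$ derivatives in $v$ but loses the factor $\abs{v-v_0}^{\alpha_0}$; this is exactly the point where $p > n/\alpha$ enters, via a localization: cover $\mathbb R^n_v$ by balls of a small radius $\rho$, in each ball take $v_0$ as the center so $\abs{v-v_0}^{\alpha_0} \le \rho^{\alpha_0}$, sum the $p$-th powers over the (locally finite) cover, and absorb the $\rho^{\alpha_0}$-small remainder into the left-hand side after choosing $\rho$ small; the Sobolev embedding $H^{\alpha_0-\alpha,p}_v \hookrightarrow$ (something controllable, using $p>n/\alpha \ge n/(\alpha_0-\alpha)$-type counting) is what makes the gained $\alpha_0$ derivatives in $v$ sufficient to dominate the $\alpha$-gap between the $\beta$ on the left and the $\beta-\alpha$ available from interpolation, while keeping a lower-order $\norm{u}_{L^p}$ error.

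**The main obstacle** is this remainder estimate together with the patching argument: one must show that the term with kernel $(m(v,h)-m(v_0,h))/\abs h^{n+\beta}$ is genuinely lower order relative to $D^\beta_v$ — it behaves like $\abs{v-v_0}^{\alpha_0} D^{\beta-\alpha_0+\varepsilon}_v u$ roughly speaking — and that after localizing to scale $\rho$ the sum over the cover reproduces $\norm{D^\beta_v u}_{L^p}$ with a constant that tends to $0$ as $\rho \to 0$, so it can be absorbed. Making the localization rigorous requires a partition of unity $\{\psi_j\}$ subordinate to the cover, commuting $D^\beta_v$ (or the frozen operators $A^{m(v_j,\cdot)}$) past $\psi_j$ and controlling the commutators $\A^m(u,\psi_j)$ and $\psi_j A^m u - A^m(\psi_j u)$ by lower-order norms — these commutator bounds for non-local operators are presumably the content of the Appendix the paper refers to, and invoking them is the crux. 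Once the a priori estimate $\norm{u}_{H^{\beta,p}_v} \le C(\norm{A^m_{t,x,v} u}_{L^p} + \norm u_{L^p})$ holds on a dense set, a continuity/closedness argument upgrades it to the equality of domains $D(A^m_{t,x,v}) = H^{\beta,p}_v(\mathbb R^{2n})$.
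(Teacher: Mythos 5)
Your proposal correctly identifies which inclusion is hard, but for that inclusion you sketch a genuinely different route from the paper's, and one that is substantially heavier. You propose to re-run a Mikulevi\v{c}ius--Pragarauskas-style freezing-the-coefficients argument directly in the \emph{stationary} problem: freeze $m$ at a reference point $v_0$, split off a remainder with kernel $(m(\cdot,v,h)-m(\cdot,v_0,h))/|h|^{n+\beta}$, localize in $v$ by a partition of unity, control the resulting commutators, and absorb the small remainder. The paper instead avoids all of that via a \emph{parabolic lifting trick}: it takes the parabolic maximal $L^p$-regularity theorem of \cite{mikunew} (Theorem~1, which is exactly where that freezing argument is carried out) as a black box, and in Lemma~\ref{lem:domofA} multiplies $u(v)$ by a smooth time cutoff $\psi(t)$, sets $w(t,v)=\psi(t)u(v)$, notes that $\partial_t w - R^m_v w = \psi' u + \psi R^m_v u$, applies the parabolic a priori estimate on $[0,2]\times\R^n$, and divides by $\int_0^2\psi^p\,\dx t>0$ to recover the elliptic bound $\norm{u}_{H^{\beta,p}}\lesssim \norm{u}_{L^p}+\norm{R^m_v u}_{L^p}$; this is then applied slicewise in $x$ and integrated. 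So the paper buys a short, self-contained reduction at the cost of importing the full parabolic theorem, while your route, if completed, would give a stand-alone elliptic proof but would require reproving precisely the localization and commutator machinery that this paper never reproves (the Appendix lemmas are citations to \cite{mikunew}, not reconstructions).

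Two details in your sketch are off and worth flagging. First, for the easy inclusion $H^{\beta,p}_v\subset D(A^m_{t,x,v})$ you assert that boundedness and symmetry of $m$ alone suffice; they do not — some regularity of $m$ in $v$ is needed to handle the variable-coefficient dependence, and the paper explicitly uses the $\alpha_0$-H\"older continuity (to control $D^\alpha_v m$) via \cite[Corollary~3]{mikunew} / Lemma~\ref{lem:fromabove}. Second, your accounting of where $p>n/\alpha$ enters is muddled: it is not an embedding $H^{\alpha_0-\alpha,p}\hookrightarrow\cdots$ (which would need $p>n/(\alpha_0-\alpha)$, a condition that $p>n/\alpha$ does not imply since $\alpha_0-\alpha$ may be small); rather, as in the proof of Lemma~\ref{lem:keypart}, the condition $p>n/\alpha$ gives the Sobolev embedding $H^{\alpha,p}(\R^n)\hookrightarrow L^\infty(\R^n)$ in an auxiliary \emph{frozen-parameter} variable, which lets one replace $\sup_w|A^m_{t,x,w}u|$ by an $L^p_w$-integral of the frozen operators and their $\alpha$-order derivative in that parameter.
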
 

If $m$ is independent of $v$ this result holds true for all $p \in (1,\infty)$ by \cite[Corollary 4.4]{zhanglpmax2013}. The result holds true if $m = m(t,x,v) \in L^\infty$ with $0< \lambda \le m \le K$ by a trivial estimate.  
 
\begin{proof}
	We consider the estimate first for fixed $x$ and integration only in $v$. By \cite[Corollary 3]{mikunew} we have
	\begin{equation*}
		\norm{[A_{t,x,v}^m u](x,\cdot)}_{L^p(\R^{n})}^p \le C(\alpha,\beta,p,n) C_0^p \norm{u(x,\cdot)}_{H^{\beta,p}(\R^{n})}^p.
	\end{equation*}
	Integrating in $x$ yields $H^{\beta,p}_v(\R^{2n}) \subset D(A^m_{t,x,v})$. To obtain the other estimate we use the parabolic estimate from \cite{mikunew} and a standard trick to obtain an estimate for the operator $A^m_{t,x,v}$. We give the proof of this estimate in the appendix. Again, we consider the function $u$ for fixed $x$ first. By Lemma \ref{lem:domofA} we have 
	\begin{equation*}
		\norm{u(x,\cdot)}_{H^{\beta,p}(\R^{n})}^p \le  C(\alpha, \alpha_0,\beta,C_0,K,\lambda,n,p) \left( \norm{[A_{t,x,v}^m u](x,\cdot)}_{L^p(\R^{n})}^p + \norm{u(x,\cdot)}_{L^p(\R^{n})}^p \right),
	\end{equation*}
	which, after integrating in $x$, yields $D(A_{t,x,v}^m) = H^{\beta,p}_v(\R^{2n})$.
\end{proof}

As mentioned in the introduction it is of great importance that one can consider the initial value problem with $f = 0$ and the inhomogeneous problem with $g = 0$ separately. The trace space of ${\T}^{p}_\mu((0,T);L^p(\R^{2n})) \cap L_\mu^p((0,T);H_v^{\beta,p}(\R^{2n}))$ has already been characterized in \cite{niebel_kinetic_nodate-1} in terms of the kinetic anisotropic Besov space
\begin{equation*}
	{^{\mathrm{kin}}B}_{pp}^{\mu-1/p,\beta}(\R^{2n}) := B_{pp,x}^{\frac{\beta}{\beta+1}(\mu-1/p)}(\R^{2n}) \cap B_{pp,v}^{\beta(\mu-1/p)}(\R^{2n}).
\end{equation*}
Together with \cite[Theorem 2.18]{niebel_kinetic_nodate-1} this leads to the following Theorem.

\begin{theorem} \label{thm:cpchar}
	Let $T \in (0,\infty)$, $p \in (1,\infty)$ and $\mu \in (1/p,1]$. Let $(A(t))_{t \in [0,T]}$ be a family of operators satisfying the property of kinetic maximal $L^p_\mu(L^p)$-regularity such that $D(A(t)) = H^{\beta,p}_v(\R^{2n})$ for some $\beta \in (0,2]$. Then, the Cauchy problem
	\begin{equation*}
		\begin{cases}
			\partial_t u +v \cdot \nabla_x u = A(t)u +f \\
			u(0) = g
		\end{cases}
	\end{equation*}
	admits a unique solution $u \in {\T}^{p}_\mu((0,T);L^q(\R^{2n})) \cap L_\mu^p((0,T);H^{\beta,p}_v(\R^{2n}))$ if and only if $f \in L_\mu^p((0,T);L^q(\R^{2n}))$ and $g \in {^{\mathrm{kin}}B}_{pp}^{\mu-1/p,\beta}(\R^{2n}) $. 
\end{theorem}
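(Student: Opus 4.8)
The plan is to deduce the statement from the definition of kinetic maximal $L^p_\mu(L^p)$-regularity together with the trace space characterization $\tr\bigl(\T^{p}_\mu((0,T);L^p(\R^{2n})) \cap L^p_\mu((0,T);H^{\beta,p}_v(\R^{2n}))\bigr) = {^{\mathrm{kin}}B}_{pp}^{\mu-1/p,\beta}(\R^{2n})$ established in \cite[Theorem 2.18]{niebel_kinetic_nodate-1}. Since by assumption $D(A(t)) = H^{\beta,p}_v(\R^{2n})$, the space $\BE_\mu(0,T)$ attached to the family $(A(t))_{t \in [0,T]}$ coincides with $\T^{p}_\mu((0,T);L^p(\R^{2n})) \cap L^p_\mu((0,T);H^{\beta,p}_v(\R^{2n}))$, and hence its trace space $X_{\gamma,\mu}$ equals ${^{\mathrm{kin}}B}_{pp}^{\mu-1/p,\beta}(\R^{2n})$. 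This identification is the bridge between the two hypotheses.

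For the necessity direction I would start from a solution $u \in \BE_\mu(0,T)$ of the Cauchy problem. Then $\partial_t u + v\cdot\nabla_x u \in L^p_\mu((0,T);L^p(\R^{2n}))$ holds by the definition of $\T^{p}_\mu$, and $A(t)u \in L^p_\mu((0,T);L^p(\R^{2n}))$ because $u \in L^p_\mu((0,T);D(A))$ and $t \mapsto A(t)$ belongs to $\B(L^p_\mu((0,T);D(A));L^p_\mu((0,T);L^p(\R^{2n})))$; consequently $f = \partial_t u + v\cdot\nabla_x u - A(t)u \in L^p_\mu((0,T);L^p(\R^{2n}))$. Moreover, by the continuous embedding $\BE_\mu(0,T) \hookrightarrow C([0,T];L^p(\R^{2n}))$ the trace $g = u(0)$ is well defined and lies in $X_{\gamma,\mu} = {^{\mathrm{kin}}B}_{pp}^{\mu-1/p,\beta}(\R^{2n})$.

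For the sufficiency direction, given $f \in L^p_\mu((0,T);L^p(\R^{2n}))$ and $g \in {^{\mathrm{kin}}B}_{pp}^{\mu-1/p,\beta}(\R^{2n}) = X_{\gamma,\mu}$, I would choose, by the definition of the trace space, an extension $w \in \BE_\mu(0,T)$ with $w(0) = g$. Then $\tilde f := f - (\partial_t w + v\cdot\nabla_x w) + A(t)w$ belongs to $L^p_\mu((0,T);L^p(\R^{2n}))$, so kinetic maximal $L^p_\mu(L^p)$-regularity supplies a unique $\tilde u \in {_0 \BE}_\mu(0,T)$ with $\partial_t \tilde u + v\cdot\nabla_x \tilde u - A(t)\tilde u = \tilde f$ and $\tilde u(0) = 0$. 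Setting $u := \tilde u + w \in \BE_\mu(0,T)$ yields a solution of the full Cauchy problem with $u(0) = g$. Uniqueness is immediate: the difference of two solutions lies in ${_0 \BE}_\mu(0,T)$, solves the homogeneous equation with vanishing initial value, and therefore vanishes by the uniqueness part in the definition of kinetic maximal regularity.

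The proof is essentially a bookkeeping argument, so there is no genuine obstacle at this stage; all the substance sits in the two inputs, namely the trace characterization of \cite{niebel_kinetic_nodate-1} and — for the operators treated later in this note — the verification of kinetic maximal $L^p_\mu(L^p)$-regularity itself. The only point needing a small check is that the extension $w$ can indeed be taken in $\BE_\mu(0,T)$ with domain exactly $H^{\beta,p}_v(\R^{2n})$, which is precisely what the standing hypothesis $D(A(t)) = H^{\beta,p}_v(\R^{2n})$ — verified in the relevant cases via Lemma \ref{lem:domAm} — guarantees.
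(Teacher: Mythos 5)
Your argument is exactly the intended one: the paper offers no written proof and simply invokes the trace-space identification from \cite[Theorem 2.18]{niebel_kinetic_nodate-1}, after which the equivalence reduces to the bookkeeping you perform (extend $g$ to $w\in\BE_\mu$, absorb the inhomogeneity, invoke the definition of kinetic maximal $L^p_\mu(L^p)$-regularity for existence and uniqueness). Correct and in line with the paper.
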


Next, we consider the case of $m = m(t,h)$. The needed $L^p$-regularity estimates have essentially been proven in \cite{chen_lp-maximal_2018} for $\mu = 1$. We want extend it to $\mu \in (1/p,1] $ on this occasion.

\begin{theorem} \label{thm:kinmaxLpLp}
	Let $T \in (0,\infty)$, $p \in (1,\infty)$, $\mu \in (1/p,1]$ and $m = m(t,h) \in L^\infty( [0,T] \times \R^{n} ;\R)$ be a function symmetric in $h$, such that $\lambda \le m \le K$ for some constants $0<\lambda <K$. Then, the family of operators
	\begin{equation*}
		[A(t) u](x,v) = [A^{m}_t u](x,v) = \pv \int_{\R^n} ({u(x,v+h)-u(x,v)}) \frac{m(t,h)}{\abs{h}^{n+\beta}} \dx h
	\end{equation*} 
	with constant domain $D(A) = H_v^\beta(\R^{2n})$ satisfies the kinetic maximal $L^p_\mu(L^p)$-regularity property. In particular, the estimate
	\begin{equation*}
		\norm{u}_{\BE_\mu(0,T)} \le C\norm{\partial_t u +v \cdot \nabla_x u - Au}_{p,\mu}
	\end{equation*}
	holds for some constant $C = C(\beta,K,\lambda,\mu,n,p,T)$ and any $u \in {_0 \BE}_\mu(0,T)$.
\end{theorem}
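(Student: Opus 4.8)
The plan is to remove the transport term by passing to characteristic coordinates, to identify the transformed generator as a commuting family of Fourier multipliers whose symbols are two-sidedly comparable to that of the transformed fractional Laplacian, to establish the resulting $L^p$-estimate essentially as in \cite{chen_lp-maximal_2018} (the case $\mu=1$), and finally to upgrade to arbitrary $\mu\in(1/p,1]$ by Muckenhoupt weight theory in the time variable.

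By the definition of kinetic maximal regularity it suffices to solve \eqref{eq:kinacp} with $g=0$. I would substitute $w(t,x,v):=u(t,x+tv,v)$. Since $(x,v)\mapsto(x+tv,v)$ has unit Jacobian, $u\mapsto w$ is an isometric automorphism of $L^p_\mu((0,T);L^p(\R^{2n}))$, it intertwines $\partial_t+v\cdot\nabla_x$ with $\partial_t$, and, using that for $m=m(t,h)$ one has $D(A^m_t)=H^{\beta,p}_v(\R^{2n})$ with equivalent norms by \cite[Corollary 4.4]{zhanglpmax2013}, the theorem reduces to the following: for every $\tilde f\in L^p_\mu((0,T);L^p(\R^{2n}))$ there is a unique $w$ with $w,\,\partial_t w,\,\L(t)w\in L^p_\mu((0,T);L^p(\R^{2n}))$ and $w(0)=0$ solving $\partial_t w-\L(t)w=\tilde f$, where $\L(t)$ is the pushforward of $A^m_t$. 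A short computation gives
\begin{equation*}
	[\L(t)w](x,v)=\pv\int_{\R^n}\bigl(w(x-th,v+h)-w(x,v)\bigr)\frac{m(t,h)}{\abs h^{n+\beta}}\,\dx h,
\end{equation*}
so $\L(t)$ is translation invariant on $\R^{2n}$ with real symbol
\begin{equation*}
	\ell(t,k,\xi)=-\int_{\R^n}\bigl(1-\cos((tk-\xi)\cdot h)\bigr)\frac{m(t,h)}{\abs h^{n+\beta}}\,\dx h,\qquad -Kc_{n,\beta}\abs{tk-\xi}^\beta\le\ell(t,k,\xi)\le-\lambda c_{n,\beta}\abs{tk-\xi}^\beta,
\end{equation*}
by the symmetry in $h$ and $\lambda\le m\le K$. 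Thus $\{\L(t)\}_t$ is a commuting family of dissipative Fourier multipliers, the propagator $U(t,s)$ (the multiplier with symbol $\exp\bigl(\int_s^t\ell(\tau,k,\xi)\,\dx\tau\bigr)$) is for $t>s$ convolution in $(x,v)$ with a probability density, and the only candidate solution is the mild solution $w(t)=\int_0^tU(t,s)\tilde f(s)\,\dx s$.

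The crux is the boundedness on $L^p_\mu((0,T);L^p(\R^{2n}))$ of the maximal-regularity operator
\begin{equation*}
	[\mathcal M\tilde f](t):=\L(t)w(t)=\int_0^t\L(t)U(t,s)\tilde f(s)\,\dx s .
\end{equation*}
Indeed, granting this, $\partial_t w=\mathcal M\tilde f+\tilde f$ and $w$ lie in $L^p_\mu((0,T);L^p(\R^{2n}))$ with $w(0)=0$, so $w$ is a solution; any solution agrees with it by Duhamel's formula; and undoing the substitution yields the theorem, including the stated a priori estimate for all $u\in{}_0\BE_\mu(0,T)$. Extending the data by zero to $t\le0$, $\mathcal M$ is a causal convolution-type singular integral on $\R\times\R^{2n}$, and the two-sided bound on $\ell$ forces its kernel $\L(t)U(t,s)$, of symbol $\ell(t,k,\xi)\exp\bigl(\int_s^t\ell(\tau,k,\xi)\,\dx\tau\bigr)$, to satisfy $\norm{\L(t)U(t,s)}_{\B(L^p(\R^{2n}))}\le C(\beta,K,\lambda,n,p)/(t-s)$ together with the associated Hörmander-type cancellation estimates in $t$ and in $(x,v)$, measured against the anisotropic kinetic dilations $(t,x,v)\mapsto(\lambda^\beta t,\lambda^{\beta+1}x,\lambda v)$. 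This is exactly the Calderón--Zygmund analysis carried out in \cite{chen_lp-maximal_2018}, which already gives the boundedness of $\mathcal M$ on $L^p((0,T);L^p(\R^{2n}))$ with a constant uniform in $T$, i.e.\ the case $\mu=1$; I would import these kernel estimates rather than redo them.

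For $\mu\in(1/p,1]$ I would regard $\mathcal M$ as a singular integral in the single variable $t$ whose convolution kernel $(t,s)\mapsto\L(t)U(t,s)$ takes values in $\B(L^p(\R^{2n}))$: by the estimates just described it is a Calderón--Zygmund operator in this vector-valued sense, and it is bounded on $L^p((0,\infty);L^p(\R^{2n}))$ by the $\mu=1$ case. Since the power weight $t\mapsto t^{p-p\mu}=t^{p(1-\mu)}$ lies in the Muckenhoupt class $A_p(\R)$ precisely when $0\le p(1-\mu)<p-1$, that is for $\mu\in(1/p,1]$, the weighted (Banach-space-valued-kernel) Calderón--Zygmund theorem upgrades this to $\norm{\mathcal M\tilde f}_{p,\mu}\le C\norm{\tilde f}_{p,\mu}$ with $C=C(\beta,K,\lambda,\mu,n,p,T)$, which by the reduction above completes the proof. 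The main obstacle is the derivation of the Calderón--Zygmund/Hörmander kernel estimates for $\mathcal M$ with constants blind to the merely measurable time dependence of $m$; these have to be extracted from the comparison $\lambda c_{n,\beta}\abs\cdot^\beta\le-\ell\le Kc_{n,\beta}\abs\cdot^\beta$ and the kinetic anisotropy, which is the genuine technical work --- already present in \cite{chen_lp-maximal_2018} --- whereas the passage to general $\mu$ on top of it is soft.
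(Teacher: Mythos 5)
Your overall strategy matches the paper's: the paper also reduces to the $\mu=1$ estimate furnished by \cite{chen_lp-maximal_2018} (observing that $\nu_t=m(t,h)\abs h^{-n-\beta}\dx h$ is a measurable family of symmetric non-degenerate L\'evy measures, so the machinery there applies) and then transfers to $\mu\in(1/p,1]$ by the argument of \cite[Section 5]{niebel_kinetic_nodate-1}, with the $T$-dependence of the constant traced to the merely contractive propagator. Your change to characteristic coordinates, the identification of the symbol $\ell(t,k,\xi)$, the two-sided comparison, and the representation of the solution via the propagator $U(t,s)$ are all consistent with that.

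The one place I would push back is the claimed ``Hörmander-type cancellation estimate in $t$'' for the $\B(L^p(\R^{2n}))$-valued kernel $K(t,s)=\L(t)U(t,s)$, which your weighted vector-valued Calder\'on--Zygmund argument relies on. Writing
\begin{equation*}
	K(t,s)-K(t',s)=\int_{t'}^t \L(t)\L(\tau)U(\tau,s)\,\dx\tau+\bigl[\L(t)-\L(t')\bigr]U(t',s),
\end{equation*}
the first term has the expected $\abs{t-t'}/(t'-s)^2$ behaviour, but the second term is only $O\bigl(1/(t'-s)\bigr)$ and carries no smallness in $\abs{t-t'}$, since $m(t,\cdot)$ is merely measurable in $t$ and hence $\L(t)-\L(t')$ is a full-strength operator of order $\beta$. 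So the kernel satisfies the Hörmander condition in the second variable $s$, but not in the first variable $t$, and the standard Coifman--Fefferman / good-$\lambda$ machinery for $A_p$-weighted bounds needs regularity in the first variable. You should either justify why the time-jump term does not obstruct the weighted bound (for instance by exploiting that the weight depends on $t$ alone and working on the anisotropic space $\R\times\R^{2n}$ where only second-variable regularity is needed for the weighted extrapolation, or by a Fefferman--Stein sharp-function argument adapted to measurable-in-time coefficients as in the Krylov--Dong--Kim framework), or explicitly import the corresponding lemma from \cite{niebel_kinetic_nodate-1} as the paper does. As written, this step is a genuine gap.
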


\begin{proof}
		We note that the operators $A(t)$ are of the type  of operators studied in \cite{chen_lp-maximal_2018}. Indeed, choose $\nu_t = m(t,h)\abs{h}^{-n-\beta} \dx w$, then $t \mapsto \nu_t$ is a measurable map of symmetric non-degenerate L\'evy-measures. Therefore, the kinetic maximal $L^p(L^p)$-regularity follows along the lines of the proof for the kinetic maximal $L^p(L^p)$-regularity of the fractional Laplacian in velocity. We refer to \cite[Theorem 2.15]{niebel_kinetic_nodate-1} for more details. To deduce the $L^p_\mu(L^p)$-regularity we argue as in the proof of \cite[Section 5]{niebel_kinetic_nodate-1}. Noting that the important results from \cite{chen_lp-maximal_2018} used in \cite{niebel_kinetic_nodate-1} hold true in the time-dependent case, too.  The $T$-dependence of $C$ comes from the fact that the corresponding semigroup is only contractive, whence when estimating $\norm{u}_{p,\mu}$ we get a constant depending on $T$.
\end{proof}

We are now able to state the main Theorem of this note. Its proof is split in several parts and is given in Section \ref{sec:proofmain}. Basically, we use Theorem \ref{thm:kinmaxLpLp} together with a perturbation argument. We heavily rely on the methods of \cite{mikunew} and combine it with some kinetic tricks used in \cite{niebel_kinetic_nodate-1}. 

\begin{theorem} \label{thm:mainresult}
	Let $\beta \in (0,2)$, $\alpha \in (0,1)$ and $\alpha_0 \in (\alpha,1)$. Let $m=m(t,x,v,h) \in L^\infty( [0,T] \times \R^{3n} ; (0,\infty))$ be a function symmetric in $h$ such that $\lambda \le m(t,x,v,h) \le K$ for all $(t,x,v,h) \in [0,T] \times \R^{3n}$ and some constants $0<\lambda <K$. Suppose that there exists $C_0>0$ such that
	\begin{equation} \label{eq:thecond}
		\abs{m(t,x+tv,v,h)-m(t,y+sw,w,h)} \le C_0 \left(\abs{t-s} + \abs{x-y} + \abs{v-w} \right)^{\alpha_0} 
	\end{equation}
	for all $t,s \in [0,T]$, any $x,y,v,w \in \R^n$ and all $h \in \R^n$. 	
	Then, for all $p > \frac{n}{\alpha}$ and any $\mu \in (1/p,1]$ the family of operators
	\begin{equation*}
	 [A(t) u](x,v) = [A^m_{t,x,v} u](x,v)  = \pv \int_{\R^n} ({u(x,v+h)-u(x,v)}) \frac{m(t,x,v,h)}{\abs{h}^{n+\beta}} \dx w
	\end{equation*} 
	defined on functions $u \in D(A(t)) = H_v^{\beta,p}(\R^{2n})$ admits kinetic maximal $L^p_\mu(L^p)$-regularity. 
\end{theorem}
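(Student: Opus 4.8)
The plan is to prove Theorem~\ref{thm:mainresult} by a freezing-the-coefficients argument combined with the kinetic change of variables $(t,x,v)\mapsto(t,x+tv,v)$ that linearizes the transport operator. The key observation is that the condition \eqref{eq:thecond} says precisely that $\tilde m(t,x,v,h):=m(t,x+tv,v,h)$ is $\alpha_0$-H\"older continuous jointly in $(t,x,v)$. Since we already know from Theorem~\ref{thm:kinmaxLpLp} that for \emph{frozen} $(t_0,x_0,v_0)$ the constant-in-$(x,v)$ operator $A^{m(\cdot,t_0,x_0,v_0,\cdot)}_{t}$ — which depends only on $t$ and $h$ — admits kinetic maximal $L^p_\mu(L^p)$-regularity with a constant uniform in the freezing point (the constant in Theorem~\ref{thm:kinmaxLpLp} depends only on $\beta,K,\lambda,\mu,n,p,T$, not on the L\'evy measure itself), the strategy is to localize, compare $A^m$ with the frozen operator on each patch, sum up, and absorb the error.

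First I would set up the localization: cover $\R^{2n}$ (or rather the characteristic-straightened space) by a partition of unity $\{\phi_j\}$ subordinate to balls of small radius $\rho$, with the usual bounded-overlap property and control on derivatives. For each $j$ pick a freezing point $(x_j,v_j)$ in the support of $\phi_j$ and write, for $u\in{_0\BE}_\mu(0,T)$ with $(\partial_t+v\cdot\nabla_x)u-A^m u=f$,
\begin{equation*}
	(\partial_t+v\cdot\nabla_x)(\phi_j u)-A^{m(t,x_j,\cdot,v_j,\cdot)}_t(\phi_j u)=\phi_j f+\big(A^m-A^{m(t,x_j,\cdot,v_j,\cdot)}_t\big)(\phi_j u)+[\text{commutator terms}],
\end{equation*}
where the commutator terms come from $(v\cdot\nabla_x)\phi_j\cdot u$ and from $A^m(\phi_j u)-\phi_j A^m u=\A^m(u,\phi_j)+uA^m(\phi_j)$, using the product formula for $A^m$ recalled in the excerpt. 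Applying the frozen maximal regularity estimate to $\phi_j u$, raising to the $p$-th power, summing over $j$, and using bounded overlap gives $\norm{u}_{\BE_\mu(0,T)}^p\lesssim\norm{f}_{p,\mu}^p+\sum_j\norm{(A^m-A^{m_j})(\phi_j u)}_{p,\mu}^p+(\text{lower-order in }u)$.

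The heart of the matter is the commutator/perturbation estimate: one must show
\begin{equation*}
	\sum_j\norm{(A^m-A^{m(t,x_j,\cdot,v_j,\cdot)}_t)(\phi_j u)}_{p,\mu}^p\le\big(\eta(\rho)+C_\rho\,T^\sigma\big)\norm{u}_{\BE_\mu(0,T)}^p+C_\rho\norm{u}_{p,\mu}^p
\end{equation*}
with $\eta(\rho)\to0$ as $\rho\to0$. Here is where $p>n/\alpha$ and the H\"older exponent $\alpha_0>\alpha$ enter, exactly as in \cite{mikunew}: the difference operator $A^{m}-A^{m_j}$ has kernel $\big(m(t,x,v,h)-m(t,x_j,v_j,h)\big)|h|^{-n-\beta}$, and on $\supp\phi_j$, after straightening characteristics, $|m(t,x,v,h)-m(t,x_j,v_j,h)|\le C_0\rho^{\alpha_0}$; but the resulting kernel is of order $\beta$ only with a small weight, and to turn the $H^{\beta,p}$-norm of $\phi_j u$ on the right into something absorbable one needs the embedding $H^{\beta,p}\hookrightarrow C^{\beta-n/p}$-type gain, i.e. the appendix estimates (Lemma~\ref{lem:domofA} and \cite[Corollary 3]{mikunew}) applied with a fractional-derivative splitting $A^m-A^{m_j}=(A^m-A^{m_j})(1-\Delta_v)^{-\alpha'/2}(1-\Delta_v)^{\alpha'/2}$ for a suitable $\alpha'<\beta$, so that a fraction of the $\beta$ derivatives is traded for the smallness $\rho^{\alpha_0}$ while the remaining ones are controlled by $\norm{\cdot}_{\BE_\mu}$ with a genuinely small constant. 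I also expect to need to handle the time variable: since the straightened density depends on $t$ through both the explicit slot and the $tv$-shift, one splits $m(t,x,v,h)-m(t,x_j,v_j,h)$ keeping $t$ as is (it is already merely measurable-in-$t$ for the frozen operator, which Theorem~\ref{thm:kinmaxLpLp} allows), so no time-regularity of $m$ is lost.

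The main obstacle, as in the parabolic prototype, is this perturbation estimate — specifically, getting the coefficient of $\norm{u}_{\BE_\mu(0,T)}^p$ on the right strictly below $1$. Two mechanisms must be combined: (i) choosing $\rho$ small uses the H\"older continuity but produces a constant $C_\rho$ blowing up as $\rho\to0$ in front of the lower-order and fractional-derivative-loss terms; (ii) those blow-up terms must then be re-absorbed, the $\norm{u}_{p,\mu}$-terms by a Neumann/Gronwall argument or by shrinking $T$ (picking up the harmless $T$-dependence already present in Theorem~\ref{thm:kinmaxLpLp}, then iterating on $[0,T]$ in finitely many steps), and the lost-derivative terms by interpolation $\norm{u}_{H^{\alpha',p}_v}\le\epsilon\norm{u}_{H^{\beta,p}_v}+C_\epsilon\norm{u}_{L^p}$. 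Once the a priori estimate $\norm{u}_{\BE_\mu(0,T)}\le C\norm{f}_{p,\mu}$ is established for all $u\in{_0\BE}_\mu(0,T)$, existence follows by the method of continuity along the family $A^{(1-\theta)m+\theta\cdot 1}$ (or $\theta$-homotopy to $\lambda(-\Delta_v)^{\beta/2}$), whose endpoint has kinetic maximal regularity by \cite{niebel_kinetic_nodate-1}, and uniqueness is immediate from the a priori estimate. I would also at the outset reduce to the case $B_{t,x,v}=c=0$ and $g=0$ as in the statement, noting that the lower-order and initial-value parts are handled by Theorem~\ref{thm:cpchar} and a further (easier) perturbation.
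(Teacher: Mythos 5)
Your proposal correctly captures the macro-structure of the paper's proof: a localization with a partition of unity transported along characteristics (i.e.\ $\varphi_k(t,x,v)=\eta_k(x-tv,v)$ so that $\partial_t\varphi_k+v\cdot\nabla_x\varphi_k=0$), freezing the density at a point $(x_k,v_k)$ while leaving the $t$- and $h$-dependence unfrozen so that the frozen operator is still covered by Theorem~\ref{thm:kinmaxLpLp}, applying the frozen maximal regularity estimate, summing over patches with bounded overlap, absorbing lower-order and commutator terms, establishing the a priori estimate, using the method of continuity with a homotopy to $\lambda(-\Delta_v)^{\beta/2}$ for existence, and iterating in time to get the result on all of $[0,T]$. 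All of that matches Section~\ref{sec:proofmain} and Lemmas~\ref{lem:sumlower}--\ref{lem:sumcomm}.

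The gap is in your account of the core perturbation estimate, which in the paper is Lemma~\ref{lem:keypart}. You propose a splitting $(A^m-A^{m_j})=(A^m-A^{m_j})(1-\Delta_v)^{-\alpha'/2}(1-\Delta_v)^{\alpha'/2}$, trading part of the $\beta$ velocity derivatives on $u$ for the $\rho^{\alpha_0}$-smallness of $m-m_j$, and you invoke the embedding $H^{\beta,p}\hookrightarrow C^{\beta-n/p}$ on $u$. This does not work as stated: $(A^m-A^{m_j})(1-\Delta_v)^{-\alpha'/2}$ is still an operator of order $\beta-\alpha'>0$ whose kernel has the same $|h|^{-n-\beta}$ singularity; uniform smallness of $m-m_j$ on the patch does not by itself make it bounded on $L^p$ with a small constant, and the smoothness of $u$ plays no role in controlling that singularity. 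The mechanism actually used in the paper (following \cite[Lemmas 5, 6]{mikunew}) is quite different: one writes
\begin{equation*}
	\abs{A^m_{t,x,v}u(t,x,v)}\le \sup_{w}\abs{A^{m}_{t,x,w}u(t,x,v)}
\end{equation*}
and applies the Sobolev embedding $H^{\alpha,p}(\R^n_w)\hookrightarrow L^\infty(\R^n_w)$ --- this is precisely where $p>n/\alpha$ enters --- to the \emph{coefficient variable} $w$, not to $u$. This converts the supremum into an $L^p_w$ integral of $\abs{A^{m(t,x,w,\cdot)}_v u}^p+\abs{A^{D_w^\alpha m(t,x,w,\cdot)}_v u}^p$, and after using Lemma~\ref{lem:singintest} the full $\norm{D_v^\beta u}_{p,\mu}$ stays on the right-hand side with a coefficient $\int\sup_{t,x,h}(\abs{\eta m}^p+\abs{D^\alpha_w(\eta m)}^p)\dx w$. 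The smallness is then extracted from that coefficient: one splits the singular integral defining $D_w^\alpha(\eta m)$ at $\abs{j}=\rho$ and uses the $\alpha_0$-H\"older continuity of $m$ in $v$ (and the small support and derivative bounds of the transported cutoff $\eta$) to produce $\rho^{\alpha_0-\alpha}$-type factors, together with the kinetic H\"older condition \eqref{eq:thecond} to guarantee $\sup_{Q}\abs{m-m_j}\lesssim\delta_0^{\alpha_0}$ on the support of the cutoff. So the derivatives on $u$ are never traded away; the coefficient in front of $\norm{D_v^\beta u}_{p,\mu}$ is made small by carefully estimating the $w$-regularity of the localized density. Without this device the proposal does not close.
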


The condition on the density seems to be rather complicated at first glance. Let us therefore explain it in more detail. Before considering the non-local setting we recall analogous results for second-order differential operators.

In the parabolic case the equation $\partial_t u = m(t,v) \colon \nabla^2_v u$ with $u = u(t,v)$ admits maximal $L^p$-regularity if $m$ is elliptic, bounded and uniformly continuous. 

In the kinetic setting we consider an operator of the form $A = m(t,x,v) \colon \nabla_v^2$, where $m \in L^\infty([0,T] \times \R^{2n};\mathrm{Sym}(n))$ with $a \ge \lambda$ for some $\lambda >0$. The proofs given in \cite{bramanti_global_2013,niebel_kinetic_nodate-1} show that the operator $A$ admits kinetic maximal $L^p$-regularity under the assumption that the function $(t,x,v) \mapsto m(t,x+tv,v)$ is uniformly continuous. This assumption is  closely related to the kinetic term $\partial_t + v \cdot \nabla_x$, as during the freezing and localization argument we compare the operator $A$ with the frozen part on so-called kinetic balls. Furthermore, we note that the uniform continuity $m(t,x+tv,v)$ is neither a consequence of nor does it imply the uniform continuity of $m(t,x,v)$. We refer to \cite[Section 8]{niebel_kinetic_nodate-1} and especially \cite[Remark 8.2]{niebel_kinetic_nodate-1} for more information. Moreover, even if $m = m(x)$, the assumption, i.e. the uniform continuity of $(t,x,v) \mapsto m(x+tv)$ is still needed, even though the coefficient does not depend on $t,v$. In the opinion of the author the assumption on uniform continuity of $(t,x,v) \mapsto m(t,x+tv,v)$ is the most natural generalization of the parabolic result to the kinetic setting. 

Regarding the non-local situation let us first consider the simplest case, i.e. \linebreak $A^m_{t,x,v} = -m(t,x,v)(-\Delta_v)^{\beta/2}$. If $m \in L^\infty([0,T] \times \R^{2n};(0,\infty))$, with $m \ge \lambda>0$, is such that $(t,x,v) \mapsto m(t,x+tv,v)$ is uniformly continuous, then $A^m_{t,x,v}$ admits kinetic maximal $L^p$-regularity. This follows along the lines of \cite[Theorem 8.1]{niebel_kinetic_nodate-1}. Again, even if $m = m(x)$ we need the uniform continuity along the characteristic, i.e. that of $(t,x,v) \mapsto m(x+tv)$. In the parabolic setting this corresponds to $m = m(t,v)$, where again uniform continuity of $(t,v) \mapsto m(t,v)$ suffices if $m$ is bounded and elliptic. 
	
Next, we consider the non-local equation with a density of the form $m = m(t,x,v,h)$. In the parabolic setting, i.e. when $m = m(t,v,h)$, by the work of \cite{mikunew} we know that instead of uniform continuity, we need to assume some H\"older continuity in the spatial variable $v$ uniformly in $t$ and $h$, compare \eqref{eq:parvarcoeff} and the result mentioned afterwards. We transfer these methods to the kinetic setting using, in particular, the ideas of \cite[Theorem 8.1]{niebel_kinetic_nodate-1}. Therefore, the condition in equation \eqref{eq:thecond}, seems to be the natural counterpart. Let us also mention at which part of the proof the condition comes into play. Loosely speaking, the method of our proof can be stated as follows. We localize the equation by means of a kinetic partition of unity. In each of these kinetic balls of the form 
	\begin{equation*}
		\{ (t,x,v) \in [0,\delta_0] \times \R^{2n} \colon \; (x-tv,v) \in B_r(x_0,v_0) \}
	\end{equation*}
	we freeze the coefficient at $m(t,x_0,v_0,h)$. Then, we need to compare the original operator with the frozen operator. To compare the operators it indeed suffices to assume H\"older continuity in velocity, compare Lemma \ref{lem:keypart}. However, to assure that the important part of the difference of operator and frozen operator is small enough we need to control the coefficient appearing in the estimate of Lemma \ref{lem:keypart}. To do so we need to make use of the property in \eqref{eq:useofcond}, which corresponds to the assumption of H\"older continuity along characteristics made in \eqref{eq:thecond}.
	
In summary, the regularity condition on $m$ can be viewed as classic H\"older regularity in $v$ and kinetic H\"older regularity in $x$ both uniformly in $t$ and $h$. We emphasize that we do not make any assumption on H\"older continuity with respect to $t$ and $h$. This is due to the fact that if $m = m(t,h)$, then the operator $A^m_{t,x,v}$ already admits kinetic maximal $L^p$-regularity by Theorem \ref{thm:kinmaxLpLp}, therefore, there is no need to freeze in $t$ or $h$.
	
Finally, we want to mention that it is not known to the author, whether one can reduce the assumption of a strong continuity property along the characteristics to merely uniform or H\"older continuity in $x$. Clearly, H\"older continuity in $x,v$ does not imply the validity of the condition \eqref{eq:thecond}, compare also \cite[Remark 8.2]{niebel_kinetic_nodate-1}. We want to emphasize that Theorem \ref{thm:mainresult} generalizes the findings in the case of differential operators of second order to non-local operators while we see that the regularity along characteristics plays an important role in the non-local case, too. 
\\ $ $ \\ 
Concerning lower order terms there are multiple operators we may choose from, which perturb $A_{t,x,v}^m$ only in a small sense. We consider two possible options. 

\begin{coro} \label{cor:naturalower}
	Let $\beta \in (0,2)$, $\alpha \in (0,1)$ and $m$ be as in the assumptions of Theorem \ref{thm:mainresult}. Moreover, let $c \in L^\infty$ and $B(t) \colon  [0,T] \to \B(H^{\beta,p}(\R^{2n});L^p(\R^{2n}))$ be a family of operators satisfying the estimate 
	\begin{equation*}
	\norm{B(t)u}_{p,\mu} \lesssim \norm{D_v^r u}_{p,\mu} + \norm{D_x^s u}_{p,\mu} + \norm{u}_{p,\mu},
\end{equation*}
for some $r \in (0,\beta)$ and $s \in (0, \frac{\beta}{\beta+1})$. Then, the family of operators defined as
	\begin{equation*}
		A(t) = A^m_{t,x,v}u + B(t)u + c(t,\cdot)u
	\end{equation*}
	admits kinetic maximal $L^p_\mu(L^p)$-regularity for all $p > n/\alpha$ and all $\mu \in (1/p,1]$. 
\end{coro}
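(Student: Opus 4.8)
The plan is to deduce Corollary~\ref{cor:naturalower} from Theorem~\ref{thm:mainresult} by a standard perturbation argument based on the maximal-regularity estimate, exploiting the fact that $B(t)$ and multiplication by $c$ are lower-order in the scale $X_\beta^{r,p}$. First I would invoke Theorem~\ref{thm:mainresult}: for $p > n/\alpha$ and $\mu \in (1/p,1]$ the family $A_{t,x,v}^m$ admits kinetic maximal $L^p_\mu(L^p)$-regularity, so in particular for every $u \in {_0\BE}_\mu(0,T)$ one has
\begin{equation*}
	\norm{u}_{\BE_\mu(0,T)} \le C \norm{\partial_t u + v\cdot\nabla_x u - A^m_{t,x,v}u}_{p,\mu}.
\end{equation*}
Since $\BE_\mu(0,T) = \T^p_\mu((0,T);L^p(\R^{2n})) \cap L^p_\mu((0,T);H^{\beta,p}_v(\R^{2n}))$, this controls $\norm{u}_{p,\mu} + \norm{\partial_t u + v\cdot\nabla_x u}_{p,\mu} + \norm{D_v^\beta u}_{p,\mu}$, and via the definition of $X_\beta^{1,p}$ also the $x$-regularity $\norm{D_x^{\beta/(\beta+1)} u}_{p,\mu}$.

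Next I would estimate the perturbation. Because $c \in L^\infty$, clearly $\norm{c(t,\cdot)u}_{p,\mu} \le \norm{c}_\infty \norm{u}_{p,\mu}$, and by hypothesis $\norm{B(t)u}_{p,\mu} \lesssim \norm{D_v^r u}_{p,\mu} + \norm{D_x^s u}_{p,\mu} + \norm{u}_{p,\mu}$ with $r \in (0,\beta)$ and $s \in (0,\beta/(\beta+1))$. The key point is an interpolation/Ehrling-type inequality: since $r < \beta$ and $s < \beta/(\beta+1)$, for every $\eta > 0$ there is $C_\eta$ with
\begin{equation*}
	\norm{D_v^r u}_{p,\mu} + \norm{D_x^s u}_{p,\mu} \le \eta\left( \norm{D_v^\beta u}_{p,\mu} + \norm{D_x^{\beta/(\beta+1)} u}_{p,\mu} \right) + C_\eta \norm{u}_{p,\mu}.
\end{equation*}
This is the fractional interpolation inequality $\norm{D^r u}_p \le \eta \norm{D^\beta u}_p + C_\eta\norm{u}_p$ applied in the $v$- and $x$-variables separately (equivalently, an estimate on the Fourier side on the function spaces $X_\beta^{r,p}$, using $r < \beta$); applied with the temporal weight $t^{p-p\mu}$ it passes to the $L^p_\mu$-norm since the constants are uniform in $t$. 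Consequently $\norm{B(t)u + c(t,\cdot)u}_{p,\mu} \le \eta\norm{u}_{L^p_\mu(H^{\beta,p}_v)} + C_\eta\norm{u}_{p,\mu}$, so $B + c$ is a small perturbation of $A^m$ relative to $\BE_\mu(0,T)$.

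The final step is the perturbation/Neumann-series argument. Writing the equation $\partial_t u + v\cdot\nabla_x u - (A^m + B + c)u = f$ with $u(0)=0$ as a fixed point of the solution operator $L^{-1}$ for the unperturbed problem composed with $(B+c)$, one checks that for a sufficiently short time $T$ — or after the usual rescaling/splitting of $(0,T)$ into finitely many subintervals, which works because the trace space is fixed and on ${_0\BE}_\mu$ the relevant norms are equivalent with constants deteriorating only through the $\norm{u}_{p,\mu}$-term — the map $(B+c)\circ L^{-1}$ is a strict contraction on $L^p_\mu((0,T);L^p(\R^{2n}))$. Absorbing the $\eta$-term into the left-hand side of the maximal-regularity estimate then yields existence and uniqueness of $u \in {_0\BE}_\mu(0,T)$ and the a priori bound
\begin{equation*}
	\norm{u}_{\BE_\mu(0,T)} \le C\norm{\partial_t u + v\cdot\nabla_x u - (A^m_{t,x,v}u + B(t)u + c(t,\cdot)u)}_{p,\mu},
\end{equation*}
which is exactly the kinetic maximal $L^p_\mu(L^p)$-regularity property for $A(t)$. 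I expect the only mildly delicate point to be handling the $C_\eta\norm{u}_{p,\mu}$ term: one cannot absorb it directly, so one either iterates over short time intervals or uses that the $T$-dependent constant already present in Theorem~\ref{thm:kinmaxLpLp}/\ref{thm:mainresult} can be combined with a Gronwall-type argument in $t$; everything else is routine.
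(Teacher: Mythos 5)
Your overall strategy — use the maximal-regularity estimate from Theorem~\ref{thm:mainresult} for $A^m_{t,x,v}$, show $B(t)+c$ is a relatively bounded perturbation with arbitrarily small relative bound via interpolation, and then absorb — is exactly what the paper does, and the interpolation step, the handling of $c\in L^\infty$, and the absorption/iteration scheme are all fine. There is, however, one genuine gap in the middle of the argument.

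You write that membership in $\BE_\mu(0,T)$ controls $\norm{D_x^{\beta/(\beta+1)} u}_{p,\mu}$ ``via the definition of $X_\beta^{1,p}$.'' That is not a justification. The regularity class for the corollary is $\BE_\mu(0,T) = \T^p_\mu((0,T);L^p(\R^{2n})) \cap L^p_\mu((0,T);H^{\beta,p}_v(\R^{2n}))$; the domain is $D(A)=H^{\beta,p}_v(\R^{2n})$, which is strictly larger than $X_\beta^{1,p}=H_x^{\beta/(\beta+1),p}\cap H_v^{\beta,p}$, so there is no containment of $L^p_\mu(D(A))$ into $L^p_\mu(X_\beta^{1,p})$ to appeal to. Neither of the two pieces of the $\BE_\mu$-norm ($\partial_t u + v\cdot\nabla_x u$ in $L^p_\mu(L^p)$, and $D_v^\beta u$ in $L^p_\mu(L^p)$) controls $D_x^{\beta/(\beta+1)}u$ on its own; the gain of $\beta/(\beta+1)$ derivatives in $x$ is a nontrivial hypoelliptic regularization effect obtained from their \emph{combination}. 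The correct ingredient, which the paper explicitly invokes at precisely this point, is Bouchut's kinetic regularization estimate (Proposition 3.1 in \cite{niebel_kinetic_nodate-1}): if $u$, $\partial_t u + v\cdot\nabla_x u$, and $D_v^\beta u$ all lie in $L^p_\mu(L^p)$, then $D_x^{\beta/(\beta+1)} u \in L^p_\mu(L^p)$ with a quantitative bound by the $\BE_\mu$-norm. With that cited, your interpolation $\norm{D_x^s u}_{p,\mu}\le \eta\norm{D_x^{\beta/(\beta+1)}u}_{p,\mu}+C_\eta\norm{u}_{p,\mu}$ for $s<\beta/(\beta+1)$ and the subsequent absorption go through as you wrote them, and the proof is complete.
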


Similar to the situation considered in \cite{mikunew} we can choose even more degenerate lower order terms of the form 
\begin{align*}
	[B^\pi_{t,x,v} u ](t,x,v) &= \int_{\R^n} \left[ u(t,x,v+h)-u(t,x,v)-\tilde{\chi}_\beta(h) \langle \nabla_v u(t,x,v),h \rangle  \right] \dx \pi(t,x,v,h),
\end{align*}
where $(t,x,v) \mapsto \pi(t,x,v)$ is a measurable family of nonnegative measures on $\R^n$ and $\tilde{\chi}_\beta(h) = \mathds{1}_{\abs{h} \le 1}\mathds{1}_{1< \beta < 2}$. Here, we need to make the assumptions that 
\begin{equation} \label{eq:Bpi1}
	\sup_{t,x,v}\int_{\R^n} \min \{ \abs{h}^\beta , 1 \}  \dx \pi(t,x,v,h) \le K,
\end{equation}
\begin{equation} \label{eq:Bpi2}
	\lim\limits_{\epsilon \to 0} \sup_{t,x,v} \int_{B_\epsilon(0)} \abs{h}^\beta \dx \pi(t,x,v,h)
\end{equation}
and for all $\epsilon>0$
\begin{equation} \label{eq:Bpi3}
	\int_0^T \int_{\R^{2n}} \pi(t,x,v,B_{\epsilon}(0)^c) \dx v \dx x \dx t < \infty .
\end{equation}

\begin{theorem} \label{thm:lowermiku}
	Under the assumptions of Theorem \ref{thm:mainresult} and additionally suppose that $p> n/\beta$. Then, if $\pi$ satisfies the assumptions \eqref{eq:Bpi1}, \eqref{eq:Bpi2} and \eqref{eq:Bpi3}, we have that the equation
	\begin{equation*}
		\begin{cases}
			\partial_t u + v \cdot \nabla_x u = A_{t,x,v}^mu + B^\pi_{t,x,v}u \\
			u(0)= g 
		\end{cases}
	\end{equation*}
	admits kinetic maximal $L^p_\mu(L^p)$-regularity. 
\end{theorem}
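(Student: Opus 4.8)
The plan is to regard $B^\pi_{t,x,v}$ as a perturbation of the principal part $A^m_{t,x,v}$, which already enjoys kinetic maximal $L^p_\mu(L^p)$-regularity by Theorem \ref{thm:mainresult} and, by Lemma \ref{lem:domAm}, has domain $H^{\beta,p}_v(\R^{2n})$. First I would check that $B^\pi_{t,x,v}$ is bounded $H^{\beta,p}_v(\R^{2n}) \to L^p(\R^{2n})$ uniformly in $t$: for fixed $(t,x)$ this is an estimate in the velocity variable for a L\'evy-type operator with base-point dependent jump measure and follows from the estimates of \cite{mikunew} using \eqref{eq:Bpi1} (here the condition $p>n/\beta$ is used via the Sobolev embedding $H^{\beta,p}_v \hookrightarrow L^\infty_v \cap C^{0,\beta-n/p}_v$ to dominate the contribution of $\{\abs{h}\ge 1\}$), and integrating in $x$ gives $D(A^m_{t,x,v}+B^\pi_{t,x,v}) = H^{\beta,p}_v(\R^{2n})$. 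By the perturbation theory for kinetic maximal regularity of \cite{niebel_kinetic_nodate-1} (as used for Corollary \ref{cor:naturalower}) together with Theorem \ref{thm:cpchar}, it then suffices to prove the kinetic maximal $L^p_\mu(L^p)$-regularity of the family $A^m_{t,x,v}+B^\pi_{t,x,v}$ on a possibly short interval $[0,\delta]$, the passage to $[0,T]$ being the usual concatenation argument of \cite{niebel_kinetic_nodate-1}.

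To this end I would fix $\epsilon\in(0,1]$ and split $B^\pi = B^{\pi,\epsilon}_{\mathrm{near}} + B^{\pi,\epsilon}_{\mathrm{far}}$ according to $\{\abs{h}<\epsilon\}$ and $\{\abs{h}\ge\epsilon\}$ (on $\{\abs{h}>1\}$ the truncation $\tilde{\chi}_\beta$ vanishes). For the near part, the cancellation in the integrand together with the estimates of \cite{mikunew} and \eqref{eq:Bpi2} yields the pointwise-in-$(t,x)$ bound
\begin{equation*}
	\norm{[B^{\pi,\epsilon}_{\mathrm{near}} u](t,x,\cdot)}_{L^p(\R^n)} \le C\,\omega(\epsilon)\,\norm{u(t,x,\cdot)}_{H^{\beta,p}(\R^n)}, \qquad \omega(\epsilon):=\sup_{t,x,v}\int_{B_\epsilon(0)}\abs{h}^\beta\,\dx \pi(t,x,v,h),
\end{equation*}
with $\omega(\epsilon)\to0$ as $\epsilon\to0$ by \eqref{eq:Bpi2}; integrating in $x$ and against the temporal weight gives $\norm{B^{\pi,\epsilon}_{\mathrm{near}}u}_{p,\mu}\le C\omega(\epsilon)\norm{u}_{L^p_\mu((0,T);H^{\beta,p}_v)}$, uniformly in $T$. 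Choosing $\epsilon$ so small that $C\omega(\epsilon)$ is below the reciprocal of the kinetic maximal regularity constant of $A^m_{t,x,v}$ from Theorem \ref{thm:mainresult}, a Neumann series argument shows that $A^m_{t,x,v}+B^{\pi,\epsilon}_{\mathrm{near}}$ admits kinetic maximal $L^p_\mu(L^p)$-regularity on all of $[0,T]$, with the same domain $H^{\beta,p}_v(\R^{2n})$.

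With $\epsilon$ now frozen, it remains to absorb $B^{\pi,\epsilon}_{\mathrm{far}}$. Here $\pi(t,x,v,B_\epsilon(0)^c)$ is bounded uniformly by $K(1+\epsilon^{-\beta})$ thanks to \eqref{eq:Bpi1} and, crucially, lies in $L^1((0,T)\times\R^{2n})$ by \eqref{eq:Bpi3}. Estimating $B^{\pi,\epsilon}_{\mathrm{far}}u$ by Jensen's inequality in $h$ and the embedding $H^{\beta,p}_v\hookrightarrow L^\infty_v$ ($p>n/\beta$ again), one is led to a bound for $\norm{B^{\pi,\epsilon}_{\mathrm{far}}u}_{L^p_\mu((0,\delta);L^p)}$ in terms of $\int_0^\delta\!\!\int_{\R^{2n}}\pi(\cdot,B_\epsilon(0)^c)$, which tends to $0$ as $\delta\to0$, multiplied by a suitable norm of $u$; controlling the latter amounts to pairing the merely integrable tail of $\pi$ against $u$ using the higher local integrability of functions in $\BE_\mu(0,\delta)$ that is available precisely because $p>n/\beta$ leaves room in the kinetic embedding theorems of \cite{niebel_kinetic_nodate-1}. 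This should give $\norm{B^{\pi,\epsilon}_{\mathrm{far}}u}_{L^p_\mu((0,\delta);L^p)}\le \eta(\delta)\norm{u}_{\BE_\mu(0,\delta)}$ with $\eta(\delta)\to0$, so that for small $\delta$ a further Neumann series argument — now applied to $A^m_{t,x,v}+B^{\pi,\epsilon}_{\mathrm{near}}$, already known to be admissible — yields kinetic maximal $L^p_\mu(L^p)$-regularity of $A^m_{t,x,v}+B^\pi_{t,x,v}$ on $[0,\delta]$, hence on $[0,T]$ by concatenation, and Theorem \ref{thm:cpchar} upgrades this to solvability of the full Cauchy problem with initial datum $g$.

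\textbf{Main obstacle.} The delicate step is the far part $B^{\pi,\epsilon}_{\mathrm{far}}$: unlike for the near part, the hypotheses give no bound on $\pi(\cdot,B_\epsilon(0)^c)$ uniform in $(x,v)$, only the integrated bound \eqref{eq:Bpi3}, so the smallness needed to close the fixed point has to be extracted from the interplay of this $L^1$-in-$(t,x,v)$ integrability, the temporal weight $t^{1-\mu}$, and the kinetic Sobolev embeddings — this is exactly where $p>n/\beta$ is indispensable and where the parabolic arguments of \cite{mikunew} must be reworked in the kinetic geometry. If a direct smallness estimate turns out to be too lossy, the alternative is to use \eqref{eq:Bpi3} to make $B^{\pi,\epsilon}_{\mathrm{far}}$ a relatively compact perturbation and invoke a Fredholm argument, at the price of proving uniqueness for the perturbed problem separately.
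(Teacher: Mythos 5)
Your overall strategy matches the paper's intent: the paper's proof of Theorem~\ref{thm:lowermiku} is a one-line remark that the argument of \cite[Theorems~2,~3]{mikunew} carries over, and what you have written out is precisely a reconstruction of that parabolic perturbation scheme (near/far splitting of $B^\pi$, absorbing the near part through the smallness in \eqref{eq:Bpi2} and the far part through the short-time smallness provided by \eqref{eq:Bpi3} together with the Sobolev embedding from $p>n/\beta$). Two remarks. First, your ``main obstacle'' paragraph contradicts your own computation three lines earlier: condition \eqref{eq:Bpi1} \emph{does} give the uniform bound $\pi(t,x,v,B_\epsilon(0)^c)\le K(1+\epsilon^{-\beta})$, as you correctly observe in the body, and this is exactly what mikunew uses in combination with the $L^1$ bound \eqref{eq:Bpi3}. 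The genuine difficulty in the far part is not the absence of a uniform bound, but that the uniform bound does not shrink with $\epsilon$; the smallness has to come from shortening the time interval and exploiting the integrability in \eqref{eq:Bpi3}. Second, the paper asserts that no special kinetic modifications are needed, whereas you suggest the kinetic Sobolev/trace embeddings of \cite{niebel_kinetic_nodate-1} come into play for the far part. The paper's claim is consistent with the structure of the estimate: for the far jumps one can argue pointwise in $(t,x)$ exactly as in \cite{mikunew}, using $H^{\beta,p}_v\hookrightarrow L^\infty_v$ (this is the only place $p>n/\beta$ is used), and then integrate over $(t,x)$ with the temporal weight; no anisotropic kinetic embeddings are required. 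Aside from these two points your reconstruction is sound and captures the mechanism of the proof the paper delegates to \cite{mikunew}.
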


\begin{remark}
	The condition $p>n/\alpha$ seems to be very restrictive at first. However, we ultimately want to apply the kinetic maximal regularity of the operators $A^m_{t,x,v}$ to study quasilinear problems. Here, we are going to choose $p$ large, anyway. This is necessary to obtain suitable embeddings for the trace space. 
\end{remark}

\begin{remark}
	Let us also comment on some questions which are of interest for further research. In the parabolic setting the assumption of symmetry of $m$ in $h$ is not needed. One could overcome this problem by considering the case $m = m(t,h)$ and prove a $L^p$-estimate for the respective operator without the use of symmetry. 
	
	Currently, there is no proof of $L^p(L^q)$-estimates if $m $ is not constant. The technique used in \cite{niebel_kinetic_nodate-1} cannot be applied so that it seems more natural to revisit the proofs given in \cite{chen_lp-maximal_2018,huangpriola_degenerate_nonlocal}. In \cite{mikunew} it is possible that the density degernates on a substantial set. Here, we are only able to show estimates assuming that $m$ is bounded from below by a positive constant. 
	
	Moreover, the results presented here suggest that in the case of a second-order differential operator $A = m(t,x,v) \colon \nabla^2 u$ it suffices to assume that $m \in L^\infty([0,T] \times \R^n;\mathrm{Sym}(n))$ with $m \ge \lambda \id$ for some $\lambda > 0$ and that for all $\epsilon>0$ there exists $\delta >0$ such that 
	\begin{equation*}
		\abs{m(t,x+tv,v)-m(t,x+sw,w)} \le \epsilon
	\end{equation*}
	for all $(t,x,v),(s,y,w) \in [0,T] \times \R^{2n}$ with $\abs{t-s}+\abs{x-y}+\abs{v-w} \le \delta$. In contrast to \cite[Theorem 8.1]{niebel_kinetic_nodate-1} we have dropped the assumption of uniform continuity in the first variable of $m$. This could be proven by verifying that the operator $A = m(t) \colon \nabla_v^2$ admits kinetic maximal $L^p_\mu(L^p)$-regularity for measurable, bounded and elliptic $m = m(t)$, compare Theorem \ref{thm:kinmaxLpLp}.
\end{remark}

\section{The proof of Theorem \ref{thm:mainresult}}
\label{sec:proofmain}
We start with the main part of the proof and provide the technical results later on.

\begin{proof}[Proof of Theorem \ref{thm:mainresult}] 
We introduce the spaces $X = L^p_\mu((0,\delta);L^p(\R^{2n}))$ with the norm $\norm{\cdot}_X = \norm{\cdot}_{p,\mu}$ and $ Z = {_0 \BE}_\mu(0,\delta)$ equipped with the respective norm $\norm{\cdot}_{Z}$ for some $\delta \in (0,T]$, which will be chosen at a later point. 

We are going to show that the operator
\begin{equation*}
		P \colon Z \to X, \; Pu = \partial_tu + v \cdot \nabla_xu - A^m_{t,x,v}u
\end{equation*}
is an isomorphism for some $\delta >0$, first. Let us start by proving that $P$ satisfies $\norm{Pu}_X \ge C \norm{u}_Z$ for some constant  $C =  C(\alpha,\alpha_0,\beta,C_0,\delta_0,K,\lambda,\mu,p,T)$, i.e. we provide the a priori estimate on $P$. This estimate together with the method of continuity yields the isomorphism property of $P$. 

Let $\delta_0>0$ and $x_1,v_1,x_2,v_2, \dots \in \R^n$ such that the sets $U_k = B_{\delta_0/2}((x_k,v_k))$, $k \in \N$ are a covering of $\R^{2n}$ with the property that $U_k \cap U_j \neq 0$ for at most a fixed number $M = M(n) \in \N$ of indices $j,k \in \N$. Let $(\eta_k)_{k \in \N} \subset C^\infty_c(\R^{2n})$ be a partition of unity such that $\sum_{k = 1}^\infty \eta_k = 1$, $0 \le \eta_k \le 1$ with $\supp \eta_k \subset U_k$. Additionally, we assume that $\norm{\nabla \eta_k}_\infty,\norm{\nabla^2 \eta_k}_\infty \le C_1(n,\delta_0)$. We define $\varphi_k(t,x,v) = [\Gamma(-t) \eta_k](x,v) =  \eta_k(x-tv,v)$, so that $\partial_t \varphi_k + v \cdot \nabla_x \varphi_k = 0$. Clearly, $(\varphi_k(t,\cdot))_{k \in \N}$ is still a partition of unity of $\R^{2n}$ for all $t \in [0,\delta_0]$ and $\norm{\nabla \varphi_k}_\infty,\norm{\nabla^2 \varphi_k}_\infty \le C_1(n,\delta_0)$.  Furthermore, we have
	\begin{equation*}
		1 \le \sum_{k = 1}^\infty \mathds{1}_{(0,1]}( \varphi_k(t,x,v)) \le M
	\end{equation*}
	for all $(t,x,v) \in [0,T] \times \R^{2n}$, where $M = M(n)$. We call the family of functions \linebreak $\Phi(n,\delta_0,(x_k)_{k \in \N},(v_k)_{k\in \N}) = (\varphi_k)_{k \in \N}$ a kinetic partition of unity. We rephrase this as for all  $\delta_0$ there exists points $(x_k)_{k \in \N},(v_k)_{k\in \N}$ and a corresponding kinetic partition of unity such that for all $(t,x,v) \in [0,\delta] \times \R^{2n}$ with $\delta \le \delta_0$ we have
	\begin{equation*}
		\abs{\varphi_k(t,x,v) \left[ m(t,x,v,h)-m(t,x_k,v_k,h) \right]} \le C \delta_0^{\alpha_0}
	\end{equation*}
	for any $k \in \N$.

	For $k \in \N$, we define the frozen operator
	\begin{align*}
		[A_k u](t,x,v) &:= [A_{t,x_k,v_k}^mu](t,x,v) \\
		&= \pv \int_{\R^{n}} \left[ u(t,x,v+h)-u(t,x,v)\right] m(t,x_k,v_k,h) \abs{h}^{-n-\beta} \dx h
	\end{align*}
	The operator $A_k$ satisfies the kinetic maximal $L^p_\mu(L^p)$-regularity property by Theorem \ref{thm:kinmaxLpLp}. In particular, the estimate
	\begin{equation*}
		\norm{u}_Z \le C_1 \norm{\partial_t u + v \cdot \nabla_x u - A_k u}_X
	\end{equation*}
	holds for all $u \in Z$ and some constant $C_1  = C_1(\beta,K,\lambda,\mu,n,p,T)$ independent of $k$. 
	
	We write 
	\begin{align*}
		\partial_t (\varphi_k u) + v \cdot \nabla_x (\varphi_k u) &= \varphi_k A^m_{t,x,v} u + \varphi_k f + (\partial_t \varphi_k + v \cdot \nabla_x \varphi_k) u \\
		&= \varphi_k A_k u +\varphi_k (A^m_{t,x,v}-A_k)u  + \varphi_k f \\
		&= A_k(\varphi_k u)   - \A_k(\varphi_k,u)-uA_k(\varphi_k) + \varphi_k (A^m_{t,x,v}-A_k)u + \varphi_k f,	
	\end{align*}
	with $f:= Pu$.
	Consequently, by the kinetic maximal regularity of $A_k$ we may estimate
	\begin{align*}
		\norm{\varphi_k u}_{Z}^p &\le C_1 \norm{(\partial_t + v\cdot \nabla_x - A_k)(\varphi_k u)}_{X}^p \\
		&\le C_1 \left(\norm{\varphi_k f}_X^p + \norm{\varphi_k (A^m_{t,x,v} u-A_ku)}_X^p  + \norm{\A_k(\varphi_k,u)}_X^p+ \norm{uA_k(\varphi_k)}_X \right)^p.
	\end{align*}
	
	Taking the sum over all $k \in \N$, using Lemma \ref{lem:sumlower}, Lemma \ref{lem:sumcomm} and Lemma \ref{lem:sumfrozen} we conclude
	\begin{align*}
		&\norm{u}_Z^p \le \sum_{k \in \N} \varphi_k \norm{u}_Z^p \\
		& \le C_1  \sum_{k \in \N}\left(\norm{\varphi_k f}_X^p + \norm{\varphi_k (A_k^\beta u-A^\beta u)}_X^p  + \norm{\A_k(\varphi_k,u)}_X^p+ \norm{uA_k(\varphi_k)}_X^p \right) \\
		&\le C_1M \norm{f}_X^p + C_1C_2 \epsilon \norm{u}_Z^p + C_3(\epsilon) \norm{u}_X^p
	\end{align*}
	after a possible reduction of $\delta_0$ and a corresponding new choice of kinetic partition of unity. 
	
	Moreover, as shown in the proof of \cite[Theorem 8.1]{niebel_kinetic_nodate-1} we have $\norm{u}_{X} \le \delta \norm{u}_{Z}$ for all $\delta >0$. Choosing first $\epsilon>0$ small and then $\delta = \delta(\epsilon)>0$ sufficiently small, it follows that 
	\begin{equation*}
		\norm{u}_X \le C_3 \norm{Pu}_{X}
	\end{equation*}
	for some constant $C_3 = C_3(\alpha, \alpha_0,\beta,C_0,\delta_0,K,\lambda,\mu,p,T)$. 
	
	It remains to verify that $P$ is surjective. We are going to use the method of continuity to prove this. For $s \in [0,1]$ we define
	\begin{equation*}
		P(s) \colon Z \to X, \; u \mapsto (1-s) P+s(\partial_t +v \cdot \nabla_x u+\lambda C(\beta,n) (-\Delta_v)^\frac{\beta}{2} u)
	\end{equation*}
	An argument similar to the one given in the proof of Lemma \ref{lem:domAm} shows that $[0,1] \to \B(Z,X)$, $s \mapsto P(s)$ is well-defined and norm continuous. Moreover, as the universal constants $\alpha_0,\beta,C_0,\delta,K,\lambda$ of $P(s)$ stay the same for all $s \in [0,1]$ we deduce that there exists a constant $C = C(\alpha,\alpha_0,\beta,C_0,\delta_0,K,\lambda,\mu,p,T)$ such that
	\begin{equation*}
		C\norm{u}_Z \le  \norm{P(s)u}_X
	\end{equation*}
	for all $s \in [0,1]$ and any $u \in Z$ . Consequently, by the method of continuity, $P=P(0)$ must be surjective as it is already known from Theorem \ref{thm:kinmaxLpLp} that $P(1)$ is surjective.  
	
	As a consequence of Theorem \ref{thm:cpchar} the result follows for non-zero initial value, too. Due to the continuity assumption on $m(t,x+tv,v,h)$, which is uniform in $[0,T]$, we can iterate this argument to deduce the claim on the intervals $[\delta,2\delta]$, $[2\delta,3\delta], \dots$ of constant length and conclude the claim on the interval $[0,T]$ by gluing the separate solutions together. 
\end{proof}

In the following we provide and prove the technical estimates we have already used. At the end of this section we give a proof of Corollary \ref{cor:naturalower} and of Theorem \ref{thm:lowermiku}. The first two estimates are standard and are also proven in \cite[Lemma 8, Lemma 9]{mikuold}.

\begin{lemma} \label{lem:sumlower}
	Let $m = m(t,x,v,h)$ be a bounded and measurable function such that $ K = \norm{m}_\infty < \infty$ and let $\Phi(n,\delta_0,(x_k)_{k \in \N},(v_k)_{k\in \N}) = (\varphi_k)_{k \in \N}$ be a kinetic partition of unity as constructed in the proof of Theorem \ref{thm:mainresult}. For all $\epsilon>0$ there exists a constant $C = C(\epsilon,\delta_0,K,n,p)$ such that 
	\begin{equation*}
		\sum_{k = 1}^\infty \norm{\A^m_{t,x,v}(\varphi_k,u)}_X^p \le \epsilon \norm{D_v^\beta u}^p_X + C \norm{u}_X^p.
	\end{equation*}
\end{lemma}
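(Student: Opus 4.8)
The plan is to control the commutator term $\A^m_{t,x,v}(\varphi_k, u)$ using the bilinear estimates for such operators collected in the Appendix (of the form proven in \cite{mikunew}), exploiting that $\varphi_k$ is smooth with derivative bounds uniform in $k$ and that the supports $U_k$ have bounded overlap. First I would recall the pointwise bilinear bound for the operator $\A^m$: since $m$ is bounded by $K$ and symmetric in $h$, for smooth $\psi$ and $u$ one has an estimate of the shape $\norm{\A^m_{t,x,v}(\psi, u)(x,\cdot)}_{L^p(\R^n)} \le C(K,n,p,\beta)\bigl(\norm{\nabla\psi}_\infty \norm{D_v^{\beta-1}u(x,\cdot)}_{L^p} + \dots\bigr)$, splitting the $h$-integral into $\abs h \le 1$ and $\abs h > 1$: on the near region we Taylor-expand $\psi(v+h)-\psi(v)$ to gain a factor $\abs h$ and absorb it against $\abs h^{-n-\beta}$, leaving an integrable kernel of order $\beta-1<1$ applied to $u$; on the far region both differences are bounded by the sup-norms of $\psi$ and (a maximal function of) $u$, and $\abs h^{-n-\beta}$ is integrable at infinity. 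The key point is that the resulting seminorm of $u$ is of order strictly less than $\beta$, hence interpolates between $D_v^\beta u$ and $u$.

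Next I would assemble the sum over $k$. Integrating the pointwise-in-$x$ bound in $x$ and then raising to the $p$-th power and summing over $k$, the bounded-overlap property ($\sum_k \mathds 1_{\supp\varphi_k} \le M$) together with $\norm{\nabla\varphi_k}_\infty \le C_1(n,\delta_0)$ gives
\begin{equation*}
	\sum_{k=1}^\infty \norm{\A^m_{t,x,v}(\varphi_k,u)}_X^p \le C(\delta_0,K,n,p,\beta)\, \norm{D_v^{r} u}_X^p + C(\delta_0,K,n,p,\beta)\,\norm{u}_X^p
\end{equation*}
for some $r \in (0,\beta)$ (concretely one can take any $r$ strictly between $\max\{0,\beta-1\}$ and $\beta$, or handle the two regimes $\beta \le 1$ and $\beta > 1$ separately). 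Finally, since $r < \beta$, the interpolation inequality $\norm{D_v^r u}_X \le \epsilon' \norm{D_v^\beta u}_X + C(\epsilon')\norm{u}_X$ (valid in $L^p$, applied fiberwise in $x$ and then integrated, or directly via the Fourier multiplier characterization of $H^{\beta,p}_v$) lets me trade the intermediate-order term for an arbitrarily small multiple of $\norm{D_v^\beta u}_X^p$ plus a large multiple of $\norm{u}_X^p$; choosing $\epsilon'$ appropriately in terms of $\epsilon$ yields the claim with $C = C(\epsilon,\delta_0,K,n,p)$.

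I expect the main obstacle to be a bookkeeping one rather than a conceptual one: making the bilinear estimate for $\A^m$ precise with the correct dependence on the derivatives of $\varphi_k$ (and ensuring this dependence is uniform in $k$, which is exactly where the construction of the kinetic partition of unity in the proof of Theorem \ref{thm:mainresult} with $\norm{\nabla\varphi_k}_\infty, \norm{\nabla^2\varphi_k}_\infty \le C_1(n,\delta_0)$ is used), and correctly treating the borderline case $\beta = 1$ where the cutoff $\chi_\beta$ enters. Since the paper notes this estimate is ``standard'' and refers to \cite[Lemma 8]{mikuold}, I would cite that bilinear bound together with the Appendix estimates and then only spell out the summation-plus-interpolation step in detail. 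One should also double-check that no spatial derivatives of $u$ appear — only velocity derivatives — which is automatic because $\A^m$ and $\varphi_k$ interact only through the $v$-variable via the translation $v \mapsto v+h$.
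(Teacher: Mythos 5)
Your strategy coincides with the paper's: split the $h$-integral in the commutator into a near range and a far range, Taylor-expand to gain powers of $|h|$, pass Minkowski's integral inequality through, use the uniform derivative bounds on $\varphi_k$ together with the bounded-overlap property (which gives $\sum_k|\nabla_v\varphi_k(t,x,v)|^p\le C(\delta_0,n,p)$) to perform the sum in $k$, and finish with the interpolation $\|D_v^r u\|_X\le \epsilon'\|D_v^\beta u\|_X+C(\epsilon')\|u\|_X$ for $r<\beta$.

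The one place you are imprecise is the near-range estimate when $\beta\in[1,2)$: after Taylor-expanding only $\psi=\varphi_k$ the kernel is $|h|^{-n-(\beta-1)}$, which is \emph{not} integrable near the origin when $\beta\ge1$ (integrability requires $\beta-1<0$, not $\beta-1<1$), so ``applied to $u$'' is then again a singular integral, not an absolutely convergent one. The paper sidesteps this by Taylor-expanding \emph{both} $u$ and $\varphi_k$ on the near range when $\beta\ge1$, leaving the absolutely convergent kernel $|h|^{2-n-\beta}$ and producing $\rho^{2-\beta}\|\nabla_v u\|_X^p$; only for $\beta\in(0,1)$ does it Taylor-expand $\varphi_k$ alone, yielding $\rho^{1-\beta}\|u\|_X^p$. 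Your parenthetical ``handle the two regimes $\beta\le1$ and $\beta>1$ separately'' is exactly the fix; once you either double-Taylor for $\beta\ge1$ or invoke a genuine singular-integral bound of the type in Lemma \ref{lem:singintest} for the $|h|^{-n-(\beta-1)}$ kernel, the argument matches the paper's.
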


\begin{proof}
	For some $\rho >0$ we write
	\begin{align*}
		&[\A^m_{t,x,v}(\varphi_k,u)](t,x,v)\\ &= \int_0^1 \int_0^1 \int_{\abs{h} \le \rho} \hspace{-5pt} \langle \nabla_v u(t,x,v+sh),h \rangle \langle \nabla_v \varphi_k(t,x,v+rh),h \rangle m(t,x,v,h) \abs{h}^{-n-\beta} \dx h \dx s \dx r \mathds{1}_{[1,2)}(\beta) \\
		&+ \int_0^1 \int_{\abs{h} \le \rho} [u(t,x,v+h)-u(t,x,v)] \langle \nabla_v \varphi_k(t,x,v+sh),h \rangle m(t,x,v,h) \abs{h}^{-n-\beta} \dx h  \dx s \mathds{1}_{(0,1)}(\beta)\\
		&+ \int_{\abs{h} > \rho}  [u(t,x,v+h)-u(t,x,v)] [\varphi_k(t,x,v+h)-\varphi_k(t,x,v)] m(t,x,v,h) \abs{h}^{-n-\beta} \dx h.
	\end{align*}
	Each term can be estimated separately. We estimate the first integral using Minkowski's integral inequality as
	\begin{align*}
		&\sum_{k \in \N} \int_0^\delta t^{p-\mu p} \int_{\R^{2n}} \abs{\int_0^1 \int_0^1 \int_{\abs{w} \le \rho} \langle \nabla_v u(t,x,v+sw),w \rangle \langle \nabla_v \varphi_k(t,x,v+rw),w \rangle \right. \\
		& \quad \quad \quad \quad\quad\quad\quad\quad \quad \quad \quad\quad\quad\quad \quad\quad\quad\quad\quad  \quad \quad \quad \quad \left. m(t,x,v,w) \abs{w}^{-n-\beta} \dx w \dx s \dx r}^p \dx v \dx x \dx t \\
		&\le K^p \left( \int_0^1 \int_0^1 \int_{\abs{w} \le \rho} \left( \sum_{k \in \N} \int_0^\delta t^{p-\mu p} \int_{\R^{2n}} \abs{\nabla_v u(t,x,v+sw)}^p \right. \right.  \\
		& \quad \quad \quad \quad \quad \quad \quad\quad\quad\quad \quad\quad\quad  \quad \quad \quad \quad \left.  \left. \abs{ \nabla_v \varphi_k(t,x,v+rw)}^p \abs{w}^{(2-n-\beta)p} \dx v \dx x \dx t \right)^\frac{1}{p}  \dx w \dx s \dx r\right)^p \\
		&\le C(\delta_0,n,p)K^p \norm{\nabla_v u}_X^p \int_{\abs{w} \le \rho } \abs{w}^{2-n-\beta} \dx w = C(\delta_0,K,n,p)\rho^{2-\beta}\norm{\nabla_v u}_X^p,
	\end{align*}
	where we have used
	\begin{equation*}
		\sum_{k \in \N} \abs{\nabla_v \varphi_{k}(t,x,v)}^p \le C(\delta_0,n,p). 
	\end{equation*}
	The other terms can be estimated similarly and we deduce
	\begin{equation*}
		\sum_{k \in \N} \norm{\A^m_{t,x,v}(\varphi_k,u)}_X^p \le C \left[ \rho^{2-\beta} \norm{\nabla_v u}_X^p \mathds{1}_{[1,2)}(\beta) + \rho^{1-\beta} \norm{u}_X^p \mathds{1}_{(0,1)}(\beta) + \rho^{-\beta} \norm{u}_X^p \right]
	\end{equation*}
	with $C = C(\delta_0,K,n,p)$.	This shows the claim by a classical interpolation argument and choosing $\rho$ sufficiently small in the case $\beta = 1 $. 
\end{proof}

\begin{lemma} \label{lem:sumcomm}
	Let $m = m(t,x,v,h)$ be a measurable function such that $K = \norm{m}_\infty < \infty $ and let $\Phi(n,\delta_0,(x_k)_{k \in \N},(v_k)_{k\in \N}) = (\varphi_k)_{k \in \N}$ be a kinetic partition of unity. There exists a constant $C = C(\beta,\delta_0,K,n,p)$ such that 
	\begin{equation*}
		\sum_{k = 1}^\infty \norm{uA^m_{t,x,v}(\varphi_k)}_{p,\mu}^p \le C \norm{u}_{p,\mu}^p.
	\end{equation*}
\end{lemma}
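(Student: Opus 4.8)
The plan is to bound $A^m_{t,x,v}(\varphi_k)$ pointwise by a quantity that is summable over $k$ after multiplication by $\abs{u}^p$ and integration. First I would recall that $\varphi_k(t,\cdot)$ is a smooth, compactly supported function with $\norm{\nabla\varphi_k}_\infty,\norm{\nabla^2\varphi_k}_\infty \le C_1(n,\delta_0)$ and $\supp\varphi_k(t,\cdot) \subset B_{\delta_0/2}((x_k,v_k))$ carried along characteristics. Splitting the defining integral for $A^m_{t,x,v}(\varphi_k)$ at $\abs{h} = 1$: on the region $\abs{h} \le 1$ one uses a Taylor expansion (to first order when $\beta < 1$, to second order when $\beta \ge 1$, matching the cutoff $\chi_\beta$) together with $\abs{m} \le K$ to get a bound by $C(\beta,K,n)\norm{\nabla^2\varphi_k}_\infty$ (or $\norm{\nabla\varphi_k}_\infty$) times a convergent integral $\int_{\abs{h}\le 1}\abs{h}^{2-n-\beta}\dx h$ resp. $\int_{\abs{h}\le 1}\abs{h}^{1-n-\beta}\dx h$; on the region $\abs{h} > 1$ one simply estimates $\abs{\varphi_k(t,x,v+h)-\varphi_k(t,x,v)} \le 2$ and uses $\int_{\abs{h}>1}\abs{h}^{-n-\beta}\dx h < \infty$. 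This yields $\norm{A^m_{t,x,v}(\varphi_k)(t,\cdot)}_\infty \le C(\beta,\delta_0,K,n)$ uniformly in $k$ and $t$.

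That uniform bound alone is not enough because summing over $k$ a constant would diverge; the key point is that $A^m_{t,x,v}(\varphi_k)(t,x,v)$ is supported, as a function of $(x,v)$ after accounting for characteristics, in an enlargement of $U_k$. More precisely, for $\abs{h} \le 1$ the integrand vanishes unless $(x,v)$ or $(x,v+h)$ lies in $\supp\varphi_k(t,\cdot)$, so the near-diagonal contribution is supported in a $\delta_0$-neighborhood of the (time-shifted) ball around $(x_k,v_k)$; for $\abs{h} > 1$ the integral decays but is genuinely nonlocal, so here I would instead keep the integrand and estimate $\sum_k \abs{\varphi_k(t,x,v+h)-\varphi_k(t,x,v)} \le \sum_k(\mathds{1}_{\supp\varphi_k}(x,v+h) + \mathds{1}_{\supp\varphi_k}(x,v)) \le 2M$ using the bounded overlap property, and exploit that multiplying by $\abs{u(t,x,v)}^p$ and integrating in $(x,v)$ produces a factor controlled by $\norm{u(t,\cdot)}_{L^p}^p$ via the integrable tail $\int_{\abs{h}>1}\abs{h}^{-n-\beta}\dx h$. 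For the near-diagonal part, the bounded overlap $\sum_k \mathds{1}_{(0,1]}(\varphi_k(t,x,v)) \le M$ ensures that $\sum_k \mathds{1}_{\text{(neighborhood of }\supp\varphi_k)}(x,v) \le C(n) M$, so that $\sum_k \norm{u\, A^m_{t,x,v}(\varphi_k)(t,\cdot)}_{L^p}^p \le C\norm{u(t,\cdot)}_{L^p}^p$.

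Combining the two regions, for a.e.\ $t \in (0,\delta)$ one obtains
\begin{equation*}
\sum_{k=1}^\infty \int_{\R^{2n}} \abs{u(t,x,v)\, [A^m_{t,x,v}(\varphi_k)](t,x,v)}^p \dx v \dx x \le C(\beta,\delta_0,K,n,p) \int_{\R^{2n}} \abs{u(t,x,v)}^p \dx v \dx x.
\end{equation*}
Multiplying by the temporal weight $t^{p-p\mu}$ and integrating over $(0,\delta)$, together with the monotone convergence theorem to interchange the sum and the time integral, gives $\sum_{k=1}^\infty \norm{u A^m_{t,x,v}(\varphi_k)}_{p,\mu}^p \le C\norm{u}_{p,\mu}^p$, which is the claim.

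The main obstacle I anticipate is handling the far tail $\abs{h} > 1$ cleanly: unlike the near-diagonal piece, $A^m_{t,x,v}(\varphi_k)$ is not compactly supported there, so one cannot simply invoke bounded overlap of the supports of $\varphi_k$. The fix is to not pull the supremum norm out first but rather to keep the summation inside the $h$-integral, apply $\sum_k\abs{\varphi_k(t,x,v+h)-\varphi_k(t,x,v)} \le 2M$ pointwise (two terms, each summing to at most $M$ by bounded overlap evaluated at $(x,v)$ and at $(x,v+h)$), and only then take the $L^p$-norm in $(x,v)$ and integrate the harmless factor $\abs{h}^{-n-\beta}$ over $\abs{h}>1$; a translation invariance argument in $v$ absorbs the shift by $h$. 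One must also be mildly careful with the principal value / symmetry in $h$ near $\beta = 1$, but since $m$ is symmetric in $h$ the odd part of the Taylor expansion cancels and the cutoff $\chi_\beta$ makes this rigorous, exactly as in the parabolic estimates of \cite{mikuold}.
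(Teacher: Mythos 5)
Your proposal is correct and follows essentially the same strategy as the paper: split the $h$-integral at $\abs{h}=1$, use a Taylor expansion of $\varphi_k$ (with the symmetry/$\chi_\beta$ cancellation for $\beta\ge 1$) on the near-diagonal region and the trivial bound $\abs{\varphi_k(\cdot+h)-\varphi_k(\cdot)}\le\abs{\varphi_k(\cdot+h)}+\abs{\varphi_k(\cdot)}$ on the far tail, then sum over $k$ using bounded overlap. The one genuine (if minor) deviation is in how the near-diagonal piece is summed: the paper applies Minkowski's integral inequality on both pieces to move the $k$-sum and the $(t,x,v)$-integration inside the $h$-integral and then invokes $\sum_k\abs{\nabla_v\varphi_k}^p\le C(\delta_0,n,p)$, whereas you instead observe that the near-diagonal part of $A^m(\varphi_k)$ is supported in a fixed enlargement of $\supp\varphi_k(t,\cdot)$ and appeal to bounded overlap of these enlargements. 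Both routes work; be aware, though, that the overlap count for $1$-enlargements of balls of radius $\delta_0/2$ is $C(n,\delta_0)$ rather than $C(n)M$ as you wrote — the bounded overlap $\sum_k\mathds{1}_{(0,1]}(\varphi_k)\le M$ of the supports themselves does not by itself control the overlap of $1$-enlargements, which needs the separation scale $\delta_0$. Since the lemma's constant is allowed to depend on $\delta_0$ this is harmless, but it is worth stating explicitly. You are also right to be explicit about the need for a second-order expansion (and symmetry of $m$ in $h$) when $\beta\ge 1$; the paper's displayed computation writes a first-order expansion and appears to have the regions $\abs{h}\le1$ and $\abs{h}>1$ transposed, which as written would make $\int\abs{h}^{-n-\beta}\dx h$ divergent — your version avoids that slip.
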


\begin{proof}
	We have
	\begin{align*}
		[uA^m_{t,x,v}(\varphi_k)](t,x,v) &= u(t,x,v) \int_{\R^n} \left[ \varphi_k(t,x,v+h)- \varphi_k(t,x,v) \right] m(t,x,v,h) \abs{h}^{-n-\beta} \dx h \\
		&= u(t,x,v) \int_{\abs{h} \le 1} \left[ \varphi_k(t,x,v+h)- \varphi_k(t,x,v) \right] m(t,x,v,h) \abs{h}^{-n-\beta} \dx h \\
		&+ u(t,x,v) \int_{\abs{h}>1} \int_0^1 \langle \nabla_v \varphi_k(t,x+sh,v), h \rangle  m(t,x,v,h) \abs{h}^{-n-\beta} \dx s \dx h
	\end{align*}
	Minkowski's inequality applied to the first integral gives
	\begin{align*}
		&\sum_{k \in \N} \int_0^T t^{p-\mu p} \int_{\R^{2n}} \abs{u(t,x,v)}^p \left| \int_{\abs{h} \le 1} \left[ \varphi_k(t,x,v+h)- \varphi_k(t,x,v) \right] \right. \\
		&\quad \quad \quad \quad\quad \quad \quad \quad\quad \quad \quad \quad\quad \quad \quad \quad\quad \quad \quad \quad\quad \quad \quad \quad \left. m(t,x,v,h) \abs{h}^{-n-\beta} \dx h\right|^p \dx v \dx x \dx t \\
		&\le K^p  \left( \int_{\abs{h} \le 1} \left( \int_0^T t^{p-\mu p} \int_{\R^{2n}}  \abs{u(t,x,v)}^p \right. \right. \\
		& \quad \quad \quad \quad\quad \quad \quad \quad\quad \quad \quad \quad\quad \left. \left. \sum_{k \in \N} \left[ \abs{\varphi_k(t,x,v+h)}^p+ \abs{ \varphi_k(t,x,v)}^p \right]  \dx v \dx x \dx t\right)^\frac{1}{p} \abs{h}^{-n-\beta} \dx h \right)^p \\
		&\le C(K,n,p) \left( \int_{\abs{h} \le 1} \left( \sum_{k \in \N} \int_0^Tt^{p-\mu p} \int_{\R^{2n}}  \abs{u(t,x,v)}^p \dx v \dx x \dx t\right)^\frac{1}{p} \abs{h}^{-n-\beta} \dx h \right)^p \\
		&\le C(\beta,K,n,p) \norm{u}_X^p.
	\end{align*}
	The estimate for the second term follows similarly using
	\begin{equation*}
		\sum_{k \in \N}  \abs{\nabla_v \varphi(t,x+sh,v)}^p \le C(\delta_0,n,p).
	\end{equation*}
	
	\end{proof}

\begin{lemma} \label{lem:sumfrozen}
	Let $\beta \in (0,2)$. Let $m = m(t,x,v,h) \in L^\infty([0,T] \times \R^{3n})$ be a bounded function with $K = \norm{m}_\infty$ such that there exists $C_0>0$ with
	\begin{equation*}
		\abs{m(t,x+tv,v,h)-m(t,y+sw,w,h)} \le C_0 \left(\abs{t-s} + \abs{x-y} + \abs{v-w} \right)^{\alpha_0} 
	\end{equation*}
	for all $t,s \in [0,T]$, any $x,y,v,w \in \R^n$ and all $h \in \R^n$. 
	
	For arbitrary $\varepsilon>0$ there exists $\delta_0>0$ and a corresponding kinetic partition of unity $\Phi(n,\delta_0,(x_k)_{k \in \N},(v_k)_{k\in \N}) = (\varphi_k)_{k \in \N}$ as constructed in the proof of Theorem \ref{thm:mainresult} such that for all $\alpha \in (0,\alpha_0)$, any $p> n/\alpha$ and all $\mu \in (1/p,1]$, there exists $C= C(\alpha,\alpha_0,\beta,C_0,\delta_0,\epsilon,K,n,p,T)$ such that  
	\begin{equation*}
		\sum_{k \in \N} \norm{\varphi_k(A^m_{t,x,v}-A^{m}_{t,x_k,v_k})u}_{p,\mu}^p \le \epsilon \norm{D_v^\beta u}_{p,\mu}^p + C\norm{u}_{p,\mu}^p.
	\end{equation*}
\end{lemma}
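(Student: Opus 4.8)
The proof splits the difference $(A^m_{t,x,v}-A^m_{t,x_k,v_k})u$ on the support of $\varphi_k$ into a "singular" part near $h=0$ and a "tail" part away from $h=0$, the singular part being the one that needs the small constant $\epsilon$ in front of $\norm{D_v^\beta u}$. First I would fix $\rho>0$ (to be chosen) and write, on $\supp\varphi_k$,
\begin{equation*}
	[(A^m_{t,x,v}-A^m_{t,x_k,v_k})u](t,x,v) = \int_{\abs{h}\le\rho}(\cdots)\,\frac{\delta_k m(t,x,v,h)}{\abs{h}^{n+\beta}}\dx h + \int_{\abs{h}>\rho}(\cdots)\,\frac{\delta_k m(t,x,v,h)}{\abs{h}^{n+\beta}}\dx h,
\end{equation*}
where $\delta_k m(t,x,v,h) := m(t,x,v,h)-m(t,x_k,v_k,h)$ and $(\cdots)$ denotes the usual symmetric second difference $u(t,x,v+h)-u(t,x,v)-\chi_\beta(h)\langle\nabla_vu,h\rangle$. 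The tail part is harmless: since $\abs{\delta_k m}\le 2K$ and $\abs{h}>\rho$, it is controlled by $C(\rho)\norm{u}_{p,\mu}$ after summing in $k$ (using $\sum_k\varphi_k^p\le M$ and, for $\beta\ge 1$, interpolating the gradient term as in Lemma~\ref{lem:sumlower}).

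The heart of the matter is the singular part, and this is where the key pointwise estimate — presumably Lemma~\ref{lem:keypart}, comparing $A^m$ and its freeze under $\alpha_0$-Hölder continuity in $v$ — enters. The crucial observation is the one recorded in the construction of the kinetic partition of unity in the proof of Theorem~\ref{thm:mainresult}: for $(t,x,v)$ with $\varphi_k(t,x,v)>0$, we have $(x-tv,v)\in B_{\delta_0}((x_k,v_k))$, and hence, applying the hypothesis \eqref{eq:thecond} with the two points $(t,x,v)=(t,(x-tv)+tv,v)$ and $(t,x_k+tv_k,v_k)$,
\begin{equation*}
	\abs{\varphi_k(t,x,v)\,\delta_k m(t,x,v,h)} \le C_0\left(\abs{(x-tv)-x_k}+\abs{v-v_k}\right)^{\alpha_0} \le C_0\,\delta_0^{\alpha_0}
\end{equation*}
uniformly in $t$, $h$ and $k$; this is the use of \eqref{eq:thecond} that the introduction flags as \eqref{eq:useofcond}. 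Feeding this bound into Lemma~\ref{lem:keypart}, the $L^p$ norm in $(x,v)$ of $\varphi_k$ times the singular part of the difference is bounded by $C\,\delta_0^{\alpha_0}\rho^{\alpha_0-\alpha}\norm{D_v^\beta u}_{L^p(\R^{2n})}$ (plus, for $\beta$ near the thresholds, lower-order $\norm{u}$ and $\norm{\nabla_v u}$ terms handled by interpolation), where the extra power $\rho^{\alpha_0-\alpha}$ comes from the improved smallness of $\delta_k m$ on small $h$-balls together with the gain $p>n/\alpha$; here I would invoke the parabolic estimate from \cite{mikunew} exactly as in the proof of Lemma~\ref{lem:domAm}, first for fixed $x$ and then integrating in $x$. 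Squaring to the $p$-th power, summing over $k$ (again using the finite-overlap bound $\sum_k\mathds{1}_{(0,1]}(\varphi_k)\le M$ so that the sum of the localized $D_v^\beta u$ norms is comparable to $\norm{D_v^\beta u}_{p,\mu}^p$), and finally integrating in $t$ against the weight $t^{p-p\mu}$, yields
\begin{equation*}
	\sum_{k\in\N}\norm{\varphi_k(A^m_{t,x,v}-A^m_{t,x_k,v_k})u}_{p,\mu}^p \le C\left(\delta_0^{\alpha_0}\rho^{\alpha_0-\alpha}\right)^p\norm{D_v^\beta u}_{p,\mu}^p + C(\rho)\norm{u}_{p,\mu}^p.
\end{equation*}

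To conclude, given $\epsilon>0$ I would first choose $\rho>0$ small (this is where $\alpha_0>\alpha$ is essential, so that $\rho^{\alpha_0-\alpha}\to 0$), then keep $\rho$ fixed and shrink $\delta_0>0$ further if necessary, so that $C(\delta_0^{\alpha_0}\rho^{\alpha_0-\alpha})^p\le\epsilon$; the remaining term is then $C(\rho)=C(\alpha,\alpha_0,\beta,C_0,\delta_0,\epsilon,K,n,p,T)$ times $\norm{u}_{p,\mu}^p$, as claimed. The main obstacle, and the one point requiring care, is the passage from the pointwise/fixed-$x$ estimate of \cite{mikunew} to the bound on $\varphi_k(A^m-A^m_k)u$ with the correct power $\rho^{\alpha_0-\alpha}$: one must exploit that the Hölder seminorm of $v\mapsto\delta_k m(t,x,v,h)$ restricted to $\abs{h}\le\rho$ is not merely bounded but that $\delta_k m$ itself is $O(\delta_0^{\alpha_0})$ on $\supp\varphi_k$, and combine the two to extract the gain in $\rho$ from the singular integral — this is precisely the mechanism that makes the freezing argument close in the non-local setting, and it is the reason the hypothesis must be phrased along the characteristics rather than as plain Hölder continuity in $(x,v)$.
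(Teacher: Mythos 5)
Your identification of the key mechanism is correct: the hypothesis \eqref{eq:thecond} gives $\abs{\varphi_k\,\delta_k m}\le C_0\delta_0^{\alpha_0}$ uniformly (exactly the paper's \eqref{eq:useofcond}), the strict gap $\alpha_0>\alpha$ in Lemma~\ref{lem:keypart} is what makes the coefficient small, and the parabolic Sobolev-embedding estimates of \cite{mikunew} are invoked via Lemma~\ref{lem:keypart}. However, there is a genuine gap at the summation step. You apply Lemma~\ref{lem:keypart} directly to $\varphi_k(A^m_{t,x,v}-A^m_{t,x_k,v_k})u$ with $\eta=\varphi_k$, obtaining a per-$k$ bound of the form
\begin{equation*}
	\norm{\varphi_k\,R_k u}_{p,\mu}^p\le C\,\delta_0^{\alpha_0 p}\rho^{(\alpha_0-\alpha)p}\,\norm{D_v^\beta u}_{p,\mu}^p+C(\rho)\norm{u}_{p,\mu}^p,
\end{equation*}
where the right-hand side involves the \emph{global} norm $\norm{D_v^\beta u}_{p,\mu}$, and the constants are uniform in $k$. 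Summing this over the infinitely many $k\in\N$ then diverges; the finite-overlap property $\sum_k\mathds{1}_{(0,1]}(\varphi_k)\le M$ does not help, because it controls sums of quantities that are themselves localized to $\supp\varphi_k$, whereas your bound on each summand is not. The phrase ``the sum of the localized $D_v^\beta u$ norms is comparable to $\norm{D_v^\beta u}_{p,\mu}^p$'' is doing unstated and, as written, unavailable work: your per-$k$ estimate produces no localized $D_v^\beta u$ norm. The root cause is that $R_k$ is nonlocal, so $\varphi_k R_k u$ is supported in $\supp\varphi_k$ but depends on $u$ globally, and Lemma~\ref{lem:keypart} inherently returns the global $\norm{D_v^\beta u}$.

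The paper's proof avoids this by a commutator reorganization that you omit. Writing $R_k=A^m_{t,x,v}-A^m_{t,x_k,v_k}$ and introducing an auxiliary cutoff $\eta_k$ with $\varphi_k=\eta_k\varphi_k$, one uses $\varphi_k R_k u=R_k(\varphi_k u)-\mathcal{R}_k(\varphi_k,u)-u\,R_k(\varphi_k)$. The last two terms carry a $\nabla\varphi_k$ and are summable by Lemmas~\ref{lem:sumlower} and \ref{lem:sumcomm} (the sums $\sum_k\abs{\nabla_v\varphi_k}^p$ and $\sum_k\abs{\varphi_k}^p$ are bounded pointwise). The main term is $\eta_k R_k(\varphi_k u)$, where $R_k$ now acts on the \emph{localized} function $\varphi_k u$; Lemma~\ref{lem:keypart} applied with $\eta=\eta_k$, $\rho=\delta_0$, and density $\tilde m=\delta_k m$ then yields a bound by $\epsilon\norm{D_v^\beta(\varphi_k u)}_{p,\mu}^p$. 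A second commutator splits $D_v^\beta(\varphi_k u)$ into $\varphi_k D_v^\beta u$ (summable by finite overlap) plus terms again handled by Lemmas~\ref{lem:sumlower}, \ref{lem:sumcomm}. This two-fold commutation is the step your proposal is missing, and it is essential to make $\sum_k$ finite while retaining the small constant $\epsilon$.
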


\begin{proof}
	We write $R = A^m_{t,x,v}-A^{m}_{t,x_k,v_k}$. For $\delta_0>0$ let $(\varphi_k)_{k \in \N} = \Phi(n,\delta_0,(x_k)_{k \in \N},(v_k)_{k\in \N})$ be a kinetic partition of unity. For $k \in \N$ we choose any cutoff function $\tilde{\eta}_k$ with support in $B_{\delta_0}(x_k) \times B_{\delta_0}(v_k)$ equal to 1 on $B_{\delta_0/2}(x_k) \times B_{\delta_0/2}(v_k)$ such that the $L^\infty$ bound on the derivatives of $\tilde{\eta}_k$ does not depend on $k$. In particular, $\eta_k(t,x,v) := \tilde{\eta}_k(x-tv,v)$ satisfies $\varphi_k(t,x,v) = \eta_k(t,x,v) \varphi_k(t,x,v)$. Then, 
		\begin{align*}
		\norm{\varphi_k Ru}_{p,\mu}^p &= \norm{\eta_k \varphi_k Ru}_{p,\mu}^p = \norm{\eta_k\left[ R(\varphi_k u)-\mathcal{R}(\varphi_k,u)-uR(\varphi_k) \right]}_{p,\mu}^p \\
		&\lesssim \norm{\eta_k R(\varphi_ku)}_{p,\mu}^p+ \norm{\mathcal{R}(\varphi_k,u)}_{p,\mu}^p+\norm{uR(\varphi_k)}_{p,\mu}^p.
	\end{align*}
	The second and third term can be estimated by Lemma \ref{lem:sumlower} and Lemma \ref{lem:sumcomm} respectively and give
	\begin{equation*}
		\sum_{k \in \N} \norm{\mathcal{R}(\varphi_k,u)}_{p,\mu}+\norm{uR(\varphi_k)}_{p,\mu} \le \epsilon \norm{D_v^\beta u}^p_{p,\mu} + C \norm{u}_{p,\mu}^p 
	\end{equation*}
	for some $C = C(\beta,\delta_0,\epsilon,K,n,p)$.
	
	Let us estimate the remaining term $\norm{\eta_k R(\varphi_ku)}_{p,\mu}$. The function $m$ is $\alpha_0$-H\"older continuous in $v$ and this property transfers to the function $\tilde{m}(t,x,v,h) = m(t,x,v,h)-m(t,x_k,v_k,h)$. Moreover, $\tilde{m}$ is bounded by $2K$. Hence, we can estimate this term by means of Lemma \ref{lem:keypart}. However, we need to take care, to control the constants in the estimate of Lemma \ref{lem:keypart}. We note that by our choice of $\eta_k$ we have 
	\begin{equation*}
		 \int_{\R^{n}} \sup_{t,x} \abs{\eta_k(t,x,w)}^p \dx w \le C(n) \text{ and }\int_{\R^{n}} \sup_{t,x} \abs{\nabla_v \eta_k(t,x,w)}^p \dx w \le C(n) \delta_0^{-p},
	\end{equation*}
	for all $k \in \N$ assuming $\delta_0 < 1$. We apply Lemma \ref{lem:keypart} with $\rho = \delta_0$, which gives
	\begin{equation*}
			\norm{\eta R(\varphi_ku)}_{p,\mu}^p \le C\left[  (1+\delta_0^{-\alpha p}) \delta_0^{\alpha_0 p} + \delta_0^{(\alpha_0-\alpha)p} + (\delta_0^{(\alpha_0-\alpha)p+p} + \delta_0^{(1+ \alpha_0-\alpha)p}) \delta_0^{-p} \right] \norm{D_v^\beta \varphi_ku}_{p,\mu}^p
	\end{equation*}
	with $C = C(\alpha,\alpha_0,\beta,C_0,K,n,p)$ as 
	\begin{equation} \label{eq:useofcond}
		\sup_{t,x,v \in \supp {\eta_k},h \in \R^n} \abs{m(t,x,v,h)-m(t,x_k,v_k,h)}^p \le C_0 \delta_0^p.
	\end{equation}
	
	Let $\epsilon>0$, then, by choosing $\delta_0 \in (0,1)$ small enough we deduce
	 \begin{equation*}
	 	\norm{\eta R(\varphi_ku)}_{p,\mu}^p \le  \epsilon \norm{D_v^\beta \varphi_ku}_{p,\mu}^p \le \epsilon \left(\norm{\varphi_k D_v^\beta u}_{p,\mu}^p +  \norm{\mathcal{D}_v^\beta(\varphi_k ,u)}_{p,\mu}^p + \norm{u D_v^\beta \varphi_k }_{p,\mu}^p\right).
	 \end{equation*}
	 Applying again Lemma \ref{lem:sumlower} and Lemma \ref{lem:sumcomm} to the new commutator terms we conclude
	 \begin{equation*}
	 	\sum_{k \in \N} \norm{\varphi_k(A^m_{t,x,v}-A^{m}_{t,x_k,v_k})u}_{p,\mu}^p \le \epsilon C_2(n) \norm{D_v^\beta u}_{p,\mu}^p + C\norm{u}_{p,\mu}^p,
	 \end{equation*}
	 where $C_2(n)$ is the number of overlapping balls as mentioned in the proof of Theorem \ref{thm:mainresult} and $C=C(\alpha,\alpha_0,\beta,C_0,\delta_0,K,n,p,T)$. 
\end{proof}

The estimate of the following lemma is the core of our argument. It is inspired by \cite[Lemma 5, Lemma 6]{mikunew} and follows their elaboration closely. However, one needs be a little more careful due to the additional dependence of $m$ on $t,x$.

\begin{lemma} \label{lem:keypart}
	Let $\beta \in (0,2)$, $\alpha \in (0,1)$, $p>n/\alpha$ and $\mu \in (1/p,1]$. Let $m = m(t,x,v,h)$ be a bounded and measurable function such that $m$ is $\alpha_0$-H\"older continuous in $v$ with $\alpha_0 \in (\alpha,1) $. Let $\rho \in (0,1)$, then, for all positive  and smooth functions $\eta \colon [0,T] \times \R^{2n} \to \R$ with compact support and any $u \in L^p_\mu((0,T);H^{\beta,p}_v(\R^{2n}))$ we have 
	\begin{align*} \label{eq:keyest}
		&\norm{\eta A^mu}_{p,\mu}^p \le C\left[ \left( (1+\rho^{-\alpha p}) \sup_{(t,x,v) \in Q, h \in \R^n}\abs{m(t,x,v,h)}^p + \rho^{(\alpha_0-\alpha)p}\right)\int_{\R^{n}} \sup_{t,x} \abs{\eta(t,x,w)}^p \dx w \right. \nonumber \\
		&+\left. \left(  \rho^{(\alpha_0-\alpha) p+p}+ \rho^{(1-\alpha)p} \sup_{(t,x,v) \in Q, h \in \R^n}\abs{m(t,x,v,h)}^p \right) \int_{\R^{n}} \sup_{t,x} \abs{\nabla_v \eta(t,x,w)}^p \dx w \right] \norm{D_v^\beta u}_{p,\mu}^p
	\end{align*}
	with $C=C(\alpha,\alpha_0,\beta,C_0,K,n,p)$ and where $Q $ denotes the support of the function $\eta$. Here, we abbreviate $K = \norm{m}_\infty$ and 
	\begin{equation*}
		C_0 = \sup_{t \in [0,T], x,v,v',h \in \R^n} \frac{\abs{m(t,x,v,h)-m(t,x,v',h)}}{\abs{v-v'}^{\alpha_0}}.
	\end{equation*}
\end{lemma}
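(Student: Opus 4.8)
The plan is to localize the operator $A^m$ in the velocity variable by freezing the $v$-dependence of the density on a scale $\rho$, exactly in the spirit of \cite[Lemma 5, Lemma 6]{mikunew}, while carrying the extra $t,x$ parameters as harmless spectators. First I would write, for a smooth point $v$,
\begin{equation*}
	[A^m u](t,x,v) = \int_{\abs{h}\le\rho} [\dots] \frac{m(t,x,v,h)}{\abs{h}^{n+\beta}}\dx h + \int_{\abs{h}>\rho} [\dots]\frac{m(t,x,v,h)}{\abs{h}^{n+\beta}}\dx h =: I_\rho u + J_\rho u,
\end{equation*}
using the symmetrized (second-difference) form for $I_\rho$ when $\beta\ge1$ and the first-difference form for $J_\rho$. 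The far part $J_\rho$ is the easy one: $\abs{J_\rho u}\lesssim K\rho^{-\beta}(\abs{u}+ |\text{maximal-type average of }u|)$ and, after multiplying by $\eta$ and applying Minkowski's integral inequality in $h$ together with the $L^p$-boundedness of translations, it is controlled by $\rho^{-\beta}\norm{m}_\infty \, (\int \sup_{t,x}\abs{\eta}^p)^{1/p}\norm{u}_{p,\mu}$; since $\norm{u}_{p,\mu}\lesssim \norm{D_v^\beta u}_{p,\mu}$ (up to lower order, which one absorbs by interpolation), this lands inside the asserted right-hand side. The subtle point is that the estimate must be \emph{uniform} in the point at which we froze, which is why only $\sup_{(t,x,v)\in Q,h}\abs{m}$ and the $\alpha_0$-H\"older seminorm $C_0$ of $m$ in $v$ enter, not finer information.

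The core is the near part $I_\rho u$. Here I would split $m(t,x,v,h) = m(t,x,v_0,h) + [m(t,x,v,h)-m(t,x,v_0,h)]$ where $v_0$ is (essentially) the frozen velocity; the first summand gives a convolution-type singular integral in $v$ with an $x,t$-dependent-but-$v$-independent kernel, which one estimates against $D_v^\beta$ via a Fourier/multiplier argument for fixed $(t,x)$ and then integrates in $(t,x)$ — this produces the $(1+\rho^{-\alpha p})\sup\abs{m}^p$ coefficient after balancing the singular part of the symbol against $\rho$. For the second summand one uses $\abs{m(t,x,v,h)-m(t,x,v_0,h)}\le C_0\abs{v-v_0}^{\alpha_0}\le C_0\rho^{\alpha_0}$ on the relevant region, trading $\rho^{\alpha_0}$ against the loss $\rho^{-\alpha}$ from the kernel to get the gain $\rho^{(\alpha_0-\alpha)p}$; the condition $p>n/\alpha$ is what makes this trade work, via a Sobolev-type embedding $H^{\beta,p}_v\hookrightarrow C^\alpha_v$ used to bound the second difference of $u$ pointwise by $\abs{h}^\alpha\, \mathcal{M}(D_v^\beta u)$-type quantities. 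Multiplying through by $\eta$ and by $\nabla_v\eta$ (the latter arising when one integrates by parts once in the $\abs{h}\le\rho$ region to exploit cancellation) yields the two groups of terms with $\int\sup\abs{\eta}^p$ and $\int\sup\abs{\nabla_v\eta}^p$ respectively, with the stated powers of $\rho$.

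The main obstacle I expect is keeping the constants genuinely uniform: because $m$ depends on $t,x$ as well, every place where \cite{mikunew} invokes a multiplier theorem or a pointwise H\"older estimate must be run for \emph{fixed} $(t,x)$ with a bound that depends only on $K=\norm{m}_\infty$ and on the $v$-H\"older seminorm $C_0$, and then one integrates in $(t,x)$ last; the temporal weight $t^{p-p\mu}$ is inert throughout since all estimates are pointwise in $t$ before integration, which is the only role $\mu$ plays. A secondary technical nuisance is the case $\beta=1$ (the cutoff $\chi_\beta$), where the first-order term in the second-difference expansion must be handled by the principal-value symmetry; but as in Lemma \ref{lem:sumlower} this is routine once $\rho$ is taken small. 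Assembling the near and far estimates and renaming the $\rho$-powers gives precisely the displayed inequality, with $C=C(\alpha,\alpha_0,\beta,C_0,K,n,p)$ as claimed.
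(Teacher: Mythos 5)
Your plan misses the central trick the paper uses and, as written, several of its steps either invoke the wrong embedding or cannot produce the specific quantities on the right-hand side of the lemma. The paper's proof does not split $h$ into near/far nor freeze the density at a single point $v_0$. It starts from the pointwise inequality
\begin{equation*}
	\abs{A^m_{t,x,v}u(t,x,v)} \le \sup_{w \in \R^n}\abs{A^m_{t,x,w}u(t,x,v)},
\end{equation*}
where $A^m_{t,x,w}$ freezes the kernel at the parameter $w$ but evaluates the operator at the original point $(t,x,v)$. The supremum over $w$ is then controlled by the Sobolev embedding $H^{\alpha,p}(\R^n_w)\hookrightarrow L^\infty(\R^n_w)$ applied in the \emph{freezing parameter} $w$ (this is exactly where $p>n/\alpha$ enters), giving $\int_{\R^n}\abs{A^m_{t,x,w}u}^p+\abs{A^{D_w^\alpha m}_{t,x,w}u}^p\dx w$. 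Then the $m=m(h)$ bound of Lemma~\ref{lem:singintest} is applied for each fixed $(t,x,w)$, leaving only $\int_{\R^n}\sup_{t,x,h}(\abs{m}^p+\abs{D_w^\alpha m}^p)\dx w$ to estimate. Your proposal instead invokes $H^{\beta,p}_v\hookrightarrow C^\alpha_v$ applied to $u$ in the solution variable; that embedding needs $p>n/(\beta-\alpha)$, not $p>n/\alpha$, and it does not yield the displayed factors, so this step is incorrect as stated.

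Two further gaps are worth naming. First, the factors $\int_{\R^n}\sup_{t,x}\abs{\eta}^p\dx w$ and $\int_{\R^n}\sup_{t,x}\abs{\nabla_v\eta}^p\dx w$ come from the identity $\eta A^m = A^{\eta m}$: one replaces $m$ by $\eta m$ in the formula above and then carefully computes $D_w^\alpha(\eta m)$, splitting the defining $j$-integral at $\abs{j}=\rho$ and using a product-rule decomposition $S=S_1+S_2+S_3$; the $\nabla_v\eta$ terms arise from applying the fundamental theorem of calculus to the difference $\eta(t,x,w+j)-\eta(t,x,w)$, not from integration by parts in $h$. Your route of "multiplying through by $\eta$ and $\nabla_v\eta$" after an $h$-integration by parts does not produce these structures. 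Second, your far-part bound $\rho^{-\beta}\norm{m}_\infty\norm{u}_{p,\mu}$ has no counterpart on the lemma's right-hand side — the stated powers of $\rho$ are $\rho^{-\alpha p},\rho^{(\alpha_0-\alpha)p},\rho^{(\alpha_0-\alpha)p+p},\rho^{(1-\alpha)p}$ — and the reduction $\norm{u}_{p,\mu}\lesssim\norm{D_v^\beta u}_{p,\mu}$ you rely on to absorb it is false, so that term cannot be made to disappear by interpolation. The $\rho$-splitting in the correct argument is in the $j$-variable of $D_w^\alpha$, not in $h$.
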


\begin{proof}
	We have
	\begin{align*}
		\abs{A^m_{t,x,v} u(t,x,v)}^p &\le \left(\sup_{w \in \R^n}  \abs{A^m_{t,x,w}u(t,x,v)} \right)^p \\
		&\le C(\alpha,n,p) \int_{\R^{n}} \abs{A^m_{t,x,w}u(t,x,v)}^p + \abs{D_w^\alpha A_{t,x,w}^m u(t,x,v)}^p \dx w \\
		&= C \int_{\R^{n}} \abs{A_{t,x,w}^m u(t,x,v)}^p + \abs{ A_{t,x,w}^{D_w^\alpha m} u(t,x,v)}^p \dx w, 
	\end{align*}
	by the Sobolev embedding theorem. By Lemma \ref{lem:singintest} we deduce
	\begin{align*}
		\norm{A^m u}_{p,\mu}^p &\le C \int_{\R^{n}} \norm{A_{t,x,w}^m u(t,x,v)}_{p,\mu}^p + \norm{ A_{t,x,w}^{D_w^\alpha m} u(t,x,v)}_{p,\mu}^p \dx w \\
		& \le C(\alpha,\beta,n,p) \norm{D_v^\beta u}_{p,\mu}^p \int_{\R^{n}} \sup_{t,x,h \in [0,T] \times \R^{2n}} \left( \abs{m(t,x,w,h)}^p + \abs{D_w^\alpha m(t,x,w,h)}^p \right) \dx w .
	\end{align*}
	We note that $\eta A^m =  A^{\eta m}$. Therefore, we only need to estimate the integrals on the right-hand side in above inequality with $m$ replaced by $\eta m$. Clearly, 
	\begin{equation*}
		\int_{\R^{n}} \sup_{t,x,h} \abs{\eta(t,x,w)m(t,x,w,h)}^p \dx w \le \sup_{(t,x,v) \in Q, h \in \R^n}\abs{m(t,x,v,h)}^p  \int_{\R^{n}} \sup_{t,x} \abs{\eta(t,x,w)}^p \dx w.
	\end{equation*} 
	To estimate the term $D_w^\alpha(\eta m)$ we need to perform a more precise analysis. For $\rho \in (0,1]$ we split
	\begin{align*}
		D_w^\alpha(\eta m)(t,x,w) &= \int_{\abs{j }  > \rho } \eta(t,x,w+j)m(t,x,w+j,h)\abs{j}^{-n-\alpha} \dx j \\
		&-\int_{\abs{j }  > \rho } \eta(t,x,w)m(t,x,w,h) \abs{j}^{-n-\alpha} \dx j\\
		&+\int_{\abs{j }\le \rho} \left[ \eta(t,x,w+j)m(t,x,w+j,h)-\eta(t,x,w)m(t,x,w,h)\right]  \abs{j}^{-n-\alpha} \dx j   \\
		&=: R_1(t,x,w,h)+R_2(t,x,w,h)+S(t,x,v,h).
	\end{align*}
	We estimate the first term as 
	\begin{align*}
		&\int_{\R^{n}} \sup_{t,x,h} \abs{R_1(t,x,w,h)}^p \dx w \le \int_{\R^{n}} \sup_{t,x,h}\abs{ \int_{\abs{j }  > \rho } \eta(t,x,w+j)m(t,x,w+j,h)\abs{j}^{-n-\alpha} \dx j}^p \dx w \\
		&\le  \int_{\R^{n}} \left( \int_{\abs{j }  > \rho } \sup_{t,x,h} \abs{\eta(t,x,w+j)m(t,x,w+j,h)}\abs{j}^{-n-\alpha} \dx j \right)^p \dx w \\
		& \le \left( \int_{\abs{j }  > \rho }  \left(  \int_{\R^{n}}\sup_{t,x,h} \abs{\eta(t,x,w+j)m(t,x,w+j,h)}^p \dx w\right)^\frac{1}{p}  \abs{j}^{-n-\alpha} \dx j  \right)^p \\
		&\le C(n) \rho^{-\alpha p} \sup_{(t,x,v) \in Q, h \in \R^n}\abs{m(t,x,v,h)}^p \int_{\R^n} \sup_{t,x}\eta(t,x,w) \dx w .
	\end{align*}
	The same estimate holds true for the term $R_2$. To treat the term $S$ we write 
	\begin{align*}
		S(t,x,w,h) &= m(t,x,w,h) \int_{\abs{j} \le \rho} \left[\eta(t,x,w+j)- \eta(t,x,w) \right] \abs{j}^{-n-\alpha} \dx j \\
		&+\eta(t,x,w) \int_{\abs{j} \le \rho} \left[m(t,x,w+j,h)- m(t,x,w,h) \right] \abs{j}^{-n-\alpha} \dx j \\
		&+ \int_{\abs{j} \le \rho} \left[\eta(t,x,w+j)- \eta(t,x,w) \right] \left[m(t,x,w+j,h)- m(t,x,w,h) \right] \abs{j}^{-n-\alpha} \dx j \\
		&=: S_1(t,x,w,h)+S_2(t,x,w,h)+S_3(t,x,w,h) 
	\end{align*}
	and estimate each of these terms separately. Let us start with the first term $S_1$. Using Minkowski's integral inequality in the third inequality we have
	\begin{align*}
		&\int_{\R^{n}} \sup_{t,x,h} \abs{S_1(t,x,w,h)}^p \dx w \\
		&\le \int_{\R^{n}} \sup_{t,x,h} \abs{m(t,x,w,h) \int_{\abs{j} \le \rho} \int_0^1 \langle \nabla_v \eta (t,x,w+js), j \rangle \dx s \abs{j}^{-n-\alpha} \dx j}^p \dx w   \\
		&\le \int_{\R^n} \left( \int_{\abs{j} \le \rho} \int_0^1 \sup_{t,x,h} \abs{m(t,x,w,h)} \abs{\nabla_v \eta (t,x,w+js)} \abs{j}^{-n-\alpha+1} \dx s \dx j \right)^p \dx w \\
		& \le \left(  \int_{\abs{j} \le \rho} \int_0^1 \left(  \int_{\R^n} \sup_{t,x,h} \abs{m(t,x,w,h)}^p \abs{\nabla_v \eta (t,x,w+js)}^p \dx w \right)^\frac{1}{p} \abs{j}^{-n-\alpha+1}  \dx s \dx j \right)^p \\
		&\le \left(  \int_{\abs{j} \le \rho} \int_0^1 \left(  \int_{\R^n} \sup_{t,x,h} \abs{m(t,x,w,h)-m(t,x,w+js,h)}^p \right. \right. \\
		& \left. \left.  \quad \quad \quad  \quad \quad \quad  \quad \quad \quad  \quad \quad \quad  \quad \quad \quad  \quad \quad \quad \abs{\nabla_v \eta (t,x,w+js)}^p \dx w \right)^\frac{1}{p} \abs{j}^{-n-\alpha+1}  \dx s \dx j \right)^p \\
		&+\left(  \int_{\abs{j} \le \rho} \int_0^1 \left(  \int_{\R^n} \sup_{t,x,h} \abs{m(t,x,w+js,h)}^p \abs{\nabla_v \eta (t,x,w+js)}^p \dx w \right)^\frac{1}{p} \abs{j}^{-n-\alpha+1}  \dx s \dx j \right)^p \\
		&\le C(\alpha_0,n) C_0^p \rho^{(\alpha_0-\alpha) p+p}   \int_{\R^n} \sup_{t,x,h} \abs{\nabla_v \eta (t,x,w)}^p \dx w \\
		&+C(n)\rho^{(1-\alpha)p} \sup_{(t,x,v) \in Q, h \in \R^n}\abs{m(t,x,v,h)}^p \int_{\R^{n}} \sup_{t,x} \abs{\nabla_v \eta(t,x,w)}^p \dx w.
	\end{align*}
	where in the last inequality we have used the H\"older continuity of $m$ in $v$ for the first term and estimated the second term similar to the term $R_1$. Turning to $S_2$ we see that 
	\begin{align*}
		&\int_{\R^{n}} \sup_{t,x,h} \abs{S_2(t,x,w,h)}^p \dx w \\
		&= \int_{\R^{n}} \sup_{t,x,h} \abs{\eta(t,x,w) \int_{\abs{j} \le \rho} \left[m(t,x,w+j,h)- m(t,x,w,h) \right] \abs{j}^{-n-\alpha} \dx j}^p \dx w   \\
		&\le \int_{\R^{n}} \sup_{t,x,h} \left(\eta(t,x,w) \int_{\abs{j} \le \rho} \abs{m(t,x,w+j,h)- m(t,x,w,h)} \abs{j}^{-n-\alpha} \dx j\right)^p \dx w  \\
		&\le C^p_0 \int_{\R^{n}} \sup_{t,x} \left(\eta(t,x,w) \int_{\abs{j} \le \rho}  \abs{j}^{-n-\alpha+\alpha_0} \dx j\right)^p \dx w \\
		&= C(C_0,n,p) \rho^{(\alpha_0-\alpha)p} \int_{\R^n} \sup_{t,x} \eta(t,x,w)^p \dx w.
\end{align*}
Lastly, we estimate $S_3$ as 
\begin{align*}
	&\int_{\R^{n}} \sup_{t,x,h} \abs{S_3(t,x,w,h)}^p \dx w \\
	&\le C^p_0 \int_{\R^{n}} \sup_{t,x} \left( \int_{\abs{j} \le \rho} \int_0^1 s^{\alpha_0}  \abs{j}^{\alpha_0}  \abs{j } \abs{\nabla_v \eta(t,x,w+sj)}^p \abs{j}^{-n-\alpha} \dx s \dx j\right)^p \dx w   \\
	& \le  C^p_0 \left( \int_{\abs{j} \le \rho} \int_0^1  s^{\alpha_0} \left( \int_{\R^{n}} \sup_{t,x}  \abs{\nabla_v \eta(t,x,w+sj)}^p  \dx w  \right)^\frac{1}{p} \abs{j}^{-n-\alpha+\alpha_0+1} \dx s \dx j \right)^p \\
	&= C(\alpha_0,C_0,p) \rho^{(\alpha_0-\alpha)p+p}\int_{\R^{n}} \sup_{t,x}  \abs{\nabla_v \eta(t,x,w)}^p  \dx w.
\end{align*}
This completes the proof of the lemma. 
	\end{proof}

\begin{proof}[Proof of Corollary \ref{cor:naturalower}.]

	Clearly, we may choose any perturbation of the form $C = cu$ for some function $c \in L^\infty([0,T] \times \R^{2n})$. That the $D_v^r$ term can be controlled is immediately clear by interpolation. To control the $D_x^s$-term we need to apply the kinetic regularization result by Bouchut (see \cite[Proposition 3.1]{niebel_kinetic_nodate-1}), which allows to control $\frac{\beta}{\beta+1}$ of a derivative in $x$, too. Whence, by interpolation we can also control the $D_x^s$-term for $s < \frac{\beta}{\beta+1}$. 
\end{proof}

\begin{proof}[Proof of Theorem \ref{thm:lowermiku}]
	The proof follows along the lines of the proof of \cite[Theorem 2, Theorem 3]{mikunew} and no special kinetic modifications need to be made.
\end{proof}

\section{Quasilinear non-local kinetic equations}
\label{sec:quasi}

We want to apply the proven results concerning kinetic maximal regularity to study quasilinear equations. This has been done in \cite{niebel_kinetic_nodate-1} in the case $\beta = 2$ by means of abstract arguments which also apply here.  Let us consider first the abstract quasilinear kinetic problem
\begin{equation} \label{eq:quasilinkin}
	\begin{cases}
		\partial_t u + v \cdot \nabla_x u = A(u)u+F(u), t>0 \\
		u(0) = g.
	\end{cases}
\end{equation}
Let $p \in (1,\infty)$, $\mu \in (1/p,1]$, $T \in (0,\infty]$ and $D$ be a Banach space, densely embedded in $L^p(\R^{2n})$. We are interested in $L^p_\mu(L^p(\R^{2n}))$ solutions, i.e. functions 
\begin{equation*}
	u \in \BE_\mu(0,T) := \T^{p}_\mu((0,T);L^p(\R^{2n})) \cap L^p_\mu((0,T);D).
\end{equation*}
 We recall the notation $X_{\gamma,\mu} = \tr(\BE_\mu(0,T))$ and let $V_\mu \subset X_{\gamma,\mu}$ be an open subset. The nonlinearity is assumed to be a function $(A,F) \colon V_\mu \to \B(D;L^p(\R^{2n}))\times L^p(\R^{2n})$ and $g \in V_\mu$. The following theorem provides short time existence of solutions to equation \eqref{eq:quasilinkin} under a local Lipschitz assumption on the nonlinearity, it is proven in \cite{niebel_kinetic_nodate-1}. 

 \begin{theorem} \label{thm:quasishorttime}
	Let $p \in (1,\infty)$ and $\mu \in (\frac{1}{p},1]$. Let
	\begin{equation*}
		(A,F) \in C^{1-}_{\mathrm{loc}}(V_\mu ; \B(D,L^p(\R^{2n}))\times L^p(\R^{2n}))
	\end{equation*} 
	and $g \in V_\mu$ such that $A(g)$ satisfies the kinetic maximal $L^p_\mu(L^p(\R^{2n}))$-regularity property. There exists a time $T = T(g)>0$ and a radius $\epsilon = \epsilon(g) >0 $ with $\bar{B}_\epsilon(g)= \bar{B}^{X_{\gamma,\mu}}_\epsilon(g) \subset V_\mu$ such that the Cauchy problem \eqref{eq:quasilinkin} admits a unique solution 
	\begin{equation*}
		u(\cdot;u_1) \in \BE_\mu(0,T) \cap C([0,T];V_\mu)
	\end{equation*}
	on $[0,T]$ for any initial value $u_1 \in \bar{B}_\epsilon(g)$. Furthermore, there exists a constant $C = C(g)$ such that for all $u_1,u_2 \in \bar{B}_\epsilon(g)$ we have
	\begin{equation*}
		\norm{u(\cdot;u_1)-u(\cdot;u_2)}_{\BE_\mu(0,T)} \le C \norm{u_1-u_2}_{X_{\gamma,\mu}},
	\end{equation*}
	i.e. the solutions depend continuously on the initial data. Finally, the solution regularizes instantaneously, i.e. for all $\delta  \in (0,T)$ we have $u \in \BE_\mu(\delta,T) \hookrightarrow C([\delta,T];X_{\gamma,1})$.
\end{theorem}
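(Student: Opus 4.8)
This is the kinetic counterpart of the abstract quasilinear maximal regularity theorem (in the parabolic case going back to Cl\'ement--Li and Pr\"uss); the plan is to reduce to vanishing initial data and apply the Banach fixed point principle, the decisive input being the kinetic maximal $L^p_\mu(L^p)$-regularity of the frozen operator $A(g)$. Since $A(g)$ has this property and $g\in X_{\gamma,\mu}=\tr(\BE_\mu(0,T_0))$, the linear Cauchy problem $\partial_t u_*+v\cdot\nabla_x u_*-A(g)u_*=0$, $u_*(0)=g$, has a unique solution $u_*\in\BE_\mu(0,T_0)$ for some fixed $T_0>0$ (pick any $\tilde g\in\BE_\mu(0,T_0)$ with $\tr\tilde g=g$ and solve for $u_*-\tilde g\in{_0 \BE}_\mu(0,T_0)$ using the maximal regularity of $A(g)$). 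Since $u_*\in\BE_\mu(0,T_0)\hookrightarrow C([0,T_0];X_{\gamma,\mu})$ and $u_*\in L^p_\mu((0,T_0);D)$, the quantities $\sup_{t\in[0,T]}\norm{u_*(t)-g}_{X_{\gamma,\mu}}$, $\norm{u_*}_{L^p_\mu((0,T);D)}$ and $\kappa(T):=\bigl(\int_0^T t^{p-p\mu}\,\dx t\bigr)^{1/p}$ all tend to $0$ as $T\to0$. Writing $u=u_*+w$ with $w\in{_0 \BE}_\mu(0,T)$, the problem \eqref{eq:quasilinkin} is equivalent to
\[
\partial_t w+v\cdot\nabla_x w-A(g)w=G(w):=\bigl[A(u_*+w)-A(g)\bigr](u_*+w)+F(u_*+w),\qquad w(0)=0.
\]

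Let $\mathcal{S}\colon L^p_\mu((0,T);L^p(\R^{2n}))\to{_0 \BE}_\mu(0,T)$ be the solution operator of the linearised problem $\partial_t w+v\cdot\nabla_x w-A(g)w=f$, $w(0)=0$; by the maximal regularity of $A(g)$ it is bounded, with $\norm{\mathcal{S}}\le M_0$ uniformly for $T\le T_0$ (the arguments of \cite{niebel_kinetic_nodate-1} give this uniformity). A solution of the original problem corresponds to a fixed point of $\Psi:=\mathcal{S}\circ G$, which I would look for in $\Sigma_R:=\{w\in{_0 \BE}_\mu(0,T)\colon\norm{w}_{\BE_\mu(0,T)}\le R\}$. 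The argument further uses the embedding ${_0 \BE}_\mu(0,T)\hookrightarrow C([0,T];X_{\gamma,\mu})$ with a constant $C_0$ uniform in $T\le T_0$ (so $\sup_t\norm{w(t)}_{X_{\gamma,\mu}}\le C_0R$ for $w\in\Sigma_R$), and the local Lipschitz bound $\norm{A(v_1)-A(v_2)}_{\B(D;L^p(\R^{2n}))}+\norm{F(v_1)-F(v_2)}_{L^p(\R^{2n})}\le L\norm{v_1-v_2}_{X_{\gamma,\mu}}$ on a fixed ball $\bar B_{\epsilon_0}(g)$.

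For $w\in\Sigma_R$ with $C_0R\le\tfrac\epsilon2$ and $\sup_{[0,T]}\norm{u_*(t)-g}_{X_{\gamma,\mu}}\le\tfrac\epsilon2$, we have $u_*(t)+w(t)\in\bar B_\epsilon(g)\subset V_\mu$, so the Lipschitz factor in $G$ is controlled by $\epsilon$ and, using $\norm{F(g)}_{L^p_\mu(L^p)}=\norm{F(g)}_{L^p(\R^{2n})}\kappa(T)$,
\[
\norm{G(w)}_{L^p_\mu(L^p)}\le L\epsilon\bigl(\norm{u_*}_{L^p_\mu(D)}+R\bigr)+L\epsilon\,\kappa(T)+\norm{F(g)}_{L^p(\R^{2n})}\kappa(T).
\]
Choosing first $\epsilon\le\epsilon_0$ with $\bar B_\epsilon(g)\subset V_\mu$ and $M_0L\epsilon$ small, then $R$ with $C_0R\le\tfrac\epsilon2$, and finally $T\le T_0$ small enough, one gets $\norm{\Psi(w)}_{\BE_\mu}\le M_0\norm{G(w)}_{L^p_\mu(L^p)}\le R$, hence $\Psi(\Sigma_R)\subset\Sigma_R$. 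Writing $G(w_1)-G(w_2)=\bigl[A(u_*+w_1)-A(u_*+w_2)\bigr](u_*+w_1)+\bigl[A(u_*+w_2)-A(g)\bigr](w_1-w_2)+F(u_*+w_1)-F(u_*+w_2)$ and bounding the three terms by $LC_0(\norm{u_*}_{L^p_\mu(D)}+R)$, $L\epsilon$ and $LC_0\kappa(T)$ times $\norm{w_1-w_2}_{\BE_\mu}$, respectively, the same choices make $\norm{\Psi(w_1)-\Psi(w_2)}_{\BE_\mu}\le\tfrac12\norm{w_1-w_2}_{\BE_\mu}$, so $\Psi$ is a contraction. The Banach fixed point theorem yields a unique $w\in\Sigma_R$, hence a unique solution $u=u_*+w\in\BE_\mu(0,T)$; since $\sup_t\norm{u(t)-g}_{X_{\gamma,\mu}}\le\epsilon$, also $u\in C([0,T];V_\mu)$. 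Continuous dependence follows by the same contraction run simultaneously for two initial values $u_1,u_2\in\bar B_\epsilon(g)$ with their reference solutions $u_{*,1},u_{*,2}$ (which depend linearly and boundedly on the initial data), absorbing the contraction part; this gives $\norm{u(\cdot;u_1)-u(\cdot;u_2)}_{\BE_\mu}\le C\norm{u_1-u_2}_{X_{\gamma,\mu}}$.

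Uniqueness in the full solution class (beyond $\Sigma_R$) follows by a standard localisation and continuation argument. For the instantaneous regularisation, fix $0<\delta<T$: since $u\in\BE_\mu(0,T)$ restricts to $\BE_\mu(\delta,T)$ and on $[\delta,T]$ the weight $t^{1-\mu}$ is bounded away from $0$ and $\infty$, we have $\BE_\mu(\delta,T)=\BE_1(\delta,T)\hookrightarrow C([\delta,T];X_{\gamma,1})$, whence $u\in C([\delta,T];X_{\gamma,1})$. I expect the main obstacle to be the uniformity bookkeeping: one must verify that the maximal regularity constant $M_0$ of $A(g)$ and the trace embedding constant $C_0$ on ${_0 \BE}_\mu(0,T)$ do not degenerate as $T\to0$, and that $\norm{u_*}_{L^p_\mu((0,T);D)}$ and $\kappa(T)$ genuinely vanish — it is exactly this that lets the Lipschitz (quadratic) nonlinear terms be absorbed into the left-hand side; upgrading uniqueness past the small ball is the only other point requiring some care.
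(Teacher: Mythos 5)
The paper does not reprove this theorem; it simply invokes \cite[Section~9]{niebel_kinetic_nodate-1} as a special case ($s=0$, $p=q$). Your reconstruction --- the Cl\'ement--Li/Pr\"uss fixed-point argument around the reference solution of the frozen linear problem, using kinetic maximal $L^p_\mu$-regularity of $A(g)$, the uniform-in-$T$ trace embedding and maximal-regularity constants, and the smallness of $\kappa(T)$ and $\norm{u_*}_{L^p_\mu((0,T);D)}$ as $T\to 0$ --- is precisely the method of the cited reference, and the splitting of $G(w_1)-G(w_2)$ and the parameter choices ($\epsilon$, then $R$, then $T$) are all correct.
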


The situation considered here is a special case of the elaborations provided in \cite[Section 9]{niebel_kinetic_nodate-1} ($s = 0$, $p = q$).

We want to study the quasilinear non-local kinetic equation with nonlinearity of integral type given as
\begin{equation} \label{eq:quasi2}
	\begin{cases}
		\partial_t u + v \cdot \nabla_x u = -m(u) (-\Delta_v)^{\beta/2} u \\
		u(0) = g,
	\end{cases}
\end{equation} 
with $m(u)(t,x) = 1+ \int_{\R^n} u(t,x,v) \mu(v) \dx v$ for a positive function $\mu \in L^1(\R^{n})$. This equation shares some similarities with the nonlinear kinetic toy-model considered in \cite{toymodel,toymodelitaly,wangnonlin}. In these articles the authors investigate solutions to a nonlinear kinetic Fokker-Planck equation with a similar nonlinearity of integral type. Here, we have dropped the unbounded lower-order term $(n/2-\abs{v}^2/2)u$ and have replaced the Laplacian in velocity with the fractional Laplacian in velocity. To consider the unbounded term we would need to investigate weighted $L^p$-spaces, which we plan to do in a future work. We note that the following results also hold true for the case $\beta = 2$, i.e. the case of the Laplacian in velocity.  

We are interested in solutions of \eqref{eq:quasi2} in the space $\T^{p}_\mu((0,T);L^p(\R^{2n})) \cap L^p_\mu((0,T);H^{\beta,p}_v(\R^{2n}))$. In this case $X_{\gamma,\mu} = {^{\mathrm{kin}}B}_{pp}^{\mu-1/p,\beta}(\R^{2n})$. We introduce the space of continuous functions vanishing at infinity
\begin{equation*}
		{C}_0(\R^{2n}) := \{ u \in C(\R^{2n}) \colon \forall \epsilon>0 \colon \exists K \subset \R^{2n} \text{ compact } \colon \abs{u(x)} \le \epsilon \; \forall x \in K^c \}.
\end{equation*}
Let $\frac{2\beta}{\beta+2}(\mu -1/p) > 2n/p$, then 
\begin{equation*}
	X_{\gamma,\mu} = {^{\mathrm{kin}}B}_{pp}^{\mu-1/p,\beta}(\R^{2n}) \hookrightarrow {C}_0(\R^{2n}),
\end{equation*}
compare \cite[Lemma A.12]{niebel_kinetic_nodate-1}.

Next, we define $A(u) = A^{m(u)}_{t,x,v}$ and note that $A^{m(u)}_{t,x,v} = -m(u) (-\Delta_v)^\frac{\beta}{2}u$.
As $m(u)$ is bounded for all $u \in X_{\gamma,\mu} \subset L^\infty(\R^{2n})$ we deduce that $A \colon X_{\gamma,\mu} \to \B(H_v^\beta(\R^{2n}),L^p(\R^{2n}))$ is well-defined. Moreover, we have
\begin{equation*}
	\abs{m(u_1)(x)-m(u_2)(x)} \le \int_{\R^{2n}} \mu(w) \dx w \norm{u_1-u_2}_\infty \le C \norm{u_1-u_2}_{X_{\gamma,\mu}}
\end{equation*}
which implies that $A$ is locally Lipschitz continuous. Finally, for nonnegative initial value $g \in X_{\gamma,\mu}$ we have that $m(g) \ge 1$ and that $m(g)(x+tv)$ is uniformly continuous by an argument similar to the one given in \cite[Lemma A.15]{niebel_kinetic_nodate-1}, so that $A(g)$ admits kinetic maximal $L^p_\mu$-regularity by \cite[Remark 8.3]{niebel_kinetic_nodate-1}. We conclude that there exist a time $T = T(g)$ such that the Cauchy problem \eqref{eq:quasi2} admits a unique solution $u \in \BE_\mu(0,T)$. We note that it suffices to assume that $g \ge \theta $ for some $\theta \in (-\norm{\mu}_1^{-1},\infty)$.

Next, we want to outline how to prove long-time existence. Two things can happen, either the solutions becomes negative enough such that $m(u(t))=0$ at some time $t$ or the norm of the solution blows up. To prove the global existence of the constructed local solution to equation \eqref{eq:quasi2} we are going to apply the ideas of \cite[Lemma 9.2 and Corollary 9.3]{niebel_kinetic_nodate-1}. We need to choose an open subset $V_\mu \subset X_{\gamma,\mu}$ such that the solution does not reach the boundary of $V_\mu$ in finite time and we need to use a priori bounds to deduce that the solution does not blow up in finite time. The method of proof is inspired by \cite{zacher}.

The first tool is a maximum principle for the frozen equation. 

\begin{prop} \label{prop:weakmax}
Let $T>0$. Let $m = m(t,x,v,h) \in L^\infty([0,T] \times \R^{3n})$ be symmetric in $h$ with 
$\lambda \le m(t,x,v,h) \le K$ for some constants $0< \lambda < K $ and all  $t \in [0,T], x,v,h \in \R^n$. Let $u \in \BE_\mu(0,T) = \T^{p}_\mu((0,T);L^p(\R^{2n})) \cap L^p_\mu((0,T);H^{\beta,p}_v(\R^{2n}))$ be a strong solution to the non-local Kolmogorov equation 
	\begin{equation*}
		\begin{cases}
			\partial_t u + v \cdot \nabla_x u = A_{t,x,v}^m u \\
			u(0) = g,
		\end{cases}
	\end{equation*}
	where $g \in X_{\gamma,\mu} \cap L^\infty(\R^{2n})$. Then,
	\begin{equation*}
		\inf_{\R^n} g \le u(t,x,v) \le \sup_{\R^n} g
	\end{equation*}
	for almost all $(t,x,v) \in [0,T]\times \R^{2n}$ .
\end{prop}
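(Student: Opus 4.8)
The plan is to establish the $L^\infty$-bound $\inf g \le u \le \sup g$ by a classical truncation (Stampacchia-type) argument, testing the equation against an appropriate function of $(u-M)_+$ where $M = \sup_{\R^n} g$; the lower bound follows by applying the upper bound to $-u$, which solves the same type of equation with density $m$ replaced by the (still symmetric, still elliptic) reflected density, and initial datum $-g$. So it suffices to prove $u \le M$. Since $u \in \BE_\mu(0,T)$ we have $u \in C([0,T];L^p(\R^{2n}))$ with $u(0)=g \le M$, and $\partial_t u + v\cdot\nabla_x u = A^m_{t,x,v} u \in L^p_\mu((0,T);L^p(\R^{2n}))$, so the chain rule / integration-by-parts manipulations below are justified for strong solutions (possibly after a regularization/approximation step, e.g. convolving in $x,v$ and commuting the mollifier with $v\cdot\nabla_x$ along characteristics as in the kinetic literature, then passing to the limit).

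The key computation: let $w = (u-M)_+$, which lies in $L^p_\mu((0,T);H^{\beta/2,p}_v)$ with $w(0)=0$. Multiply the equation by $\abs{w}^{p-2}w$ (legitimate since $u\in H^{\beta,p}_v$, so $A^m u \in L^p$) and integrate over $\R^{2n}$. The transport term $\int (\partial_t u + v\cdot\nabla_x u)\abs{w}^{p-2}w$ equals $\frac{d}{dt}\frac{1}{p}\int \abs{w(t)}^p\,\dx x\,\dx v$ because $v\cdot\nabla_x$ is divergence-free in $x$ and annihilates the constant $M$; integrating in time against the weight and using $w(0)=0$ gives $\frac{1}{p}\int_0^t \frac{d}{ds}\|w(s)\|_{L^p}^p\,\dx s = \frac{1}{p}\|w(t)\|_{L^p}^p \ge 0$. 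For the right-hand side one must show $\int_{\R^{2n}} [A^m_{s,x,v} u](x,v)\,\abs{w}^{p-2}w(x,v)\,\dx x\,\dx v \le 0$. Using the symmetry of $m$ in $h$ and the standard symmetrization $h \mapsto -h$, this integral equals $-\frac{1}{2}\int\!\!\int [u(v+h)-u(v)]\,[\phi(v+h)-\phi(v)]\,m(s,x,v,h)\abs{h}^{-n-\beta}\dx h\,\dx v\,\dx x$ with $\phi = \abs{w}^{p-2}w = ((u-M)_+)^{p-1}$, and since $t \mapsto t_+^{p-1}$ is nondecreasing, the two increments $u(v+h)-u(v)$ and $\phi(v+h)-\phi(v)$ have the same sign, so with $m \ge 0$ the integrand is $\ge 0$ and the whole term is $\le 0$. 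Combining, $\frac{1}{p}\|w(t)\|_{L^p(\R^{2n})}^p \le 0$ for a.e.\ $t$, hence $w \equiv 0$, i.e.\ $u \le M$ a.e.

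\textbf{Main obstacle.} The delicate point is making the formal testing rigorous: $\abs{w}^{p-2}w$ need not be an admissible test function in $\BE_\mu$, and the pairing $\langle A^m u, \abs{w}^{p-2}w\rangle$ together with the symmetrization identity requires enough integrability/decay to justify splitting the principal-value integral and applying Fubini. I would handle this by first proving the estimate for smooth, compactly supported approximations $u_\epsilon$ of $u$ — mollifying in $(x,v)$ along the characteristic flow $\Gamma(\cdot)$ so that the mollification commutes with $\partial_t + v\cdot\nabla_x$, as is done elsewhere in the paper's framework — checking that $A^m u_\epsilon \to A^m u$ in $L^p_\mu(L^p)$ using the domain identification $D(A^m) = H^{\beta,p}_v$ from Lemma \ref{lem:domAm} (here only boundedness and positivity of $m$ are needed, not the H\"older condition, since $m$ need not depend on $v$ for this part — but in general the Hölder hypothesis guarantees the domain characterization), and then passing to the limit in the inequality $\|( u_\epsilon(t)-M)_+\|_{L^p}^p \le \|(g-M)_+\|_{L^p}^p = 0$. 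A secondary technical point is ensuring the boundary-at-infinity terms in the integration by parts for the transport part vanish, which again follows from the approximation by compactly supported functions and the continuity $u \in C([0,T];L^p)$.
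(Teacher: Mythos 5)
Your proposal is correct and follows essentially the same route as the paper: multiply by the truncated test function $((u-k)_+)^{p-1}$, symmetrize the non-local term using the symmetry of $m$ in $h$, use monotonicity to conclude the non-local term has a sign, and observe that the transport term becomes a pure time derivative of $\|(u-k)_+\|_{L^p}^p$ after a Galilean change of variables along the characteristics. The one genuine (if minor) difference is how the sign of the non-local term is obtained: you note directly that $s\mapsto (s-M)_+^{p-1}$ is nondecreasing, so $[u(v+h)-u(v)][\phi(v+h)-\phi(v)]\ge 0$ pointwise and the term is immediately $\le 0$; the paper instead first passes from $u$ to $u_k^+$ in the first factor via a case analysis and then invokes a Stroock--Varopoulos-type inequality (\cite[Lemma 2.22]{numerical}) to identify the quantity as (minus) a nonnegative energy. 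Your version is more elementary and already suffices for the stated conclusion; the paper's version gives a quantitative positive term that is not used here. Two small remarks: $-u$ solves exactly the same equation with the same $m$ by linearity of $A^m$, no reflection of the density is needed; and the paper additionally takes $k>\sup g$, works on $[\delta,t]$, and then lets $\delta\to 0$ (relying on $u\in C([0,T];L^p)$) — your choice $k=M=\sup g$ with $w(0)=0$ avoids the limit but rests on the same continuity, so it is equivalent.
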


\begin{proof}
	Let $k> \sup_{\R^{2n}} u_0$. We want to multiply the equation by $(u_k^+)^{p-1} := \left( \max \{ u-k,0 \} \right)^{p-1}$ and integrate with respect to $[\delta,t] \times \R^{2n}$ for some $\delta< s \in [0,T]$. 
	Let us first fix $t$ and $x$, then
	\begin{align*}
		& \int_{\R^{n}} [A_{t,x,v}^mu] (u_k^+)^{p-1} \dx v \\
		&= -\frac{1}{2} \int_{\R^n} \int_{\R^n} (u(v+h)-u(v))((u_k^+)^{p-1}(v+h)-(u_k^+)^{p-1}(v)) m(t,x,v,h) \abs{h}^{-n-\beta} \dx h  \dx v \\
		&\le -\frac{1}{2}\int_{\R^n} \int_{\R^n} \left[u_k^+(v+h)-u_k^+(v)\right]\left[(u_k^+)^{p-1}(v+h)-(u_k^+)^{p-1}(v)\right] m(t,x,v,h)\abs{h}^{-n-\beta} \dx h  \dx v \\
		&\le -\frac{2(p-1)}{p^2} \int_{\R^n} \int_{\R^n} \left[(u_k^+)^{p/2}(v+h)-(u_k^+)^{p/2}(v)\right]^2 m(t,x,v,h)\abs{h}^{-n-\beta} \dx h  \dx v \le 0
	\end{align*}
	where the first estimate follows by definition of $u_k^+$ and the second estimate is a consequence of \cite[Lemma 2.22]{numerical}.
	Consequently, we have
	\begin{equation*}
		\int_\delta^s \int_{\R^{2n}} [A_{t,x,v}^mu] (u_k^+)^{p-1} \dx (t,x,v) \le 0.
	\end{equation*}
	Moreover, we derive
	\begin{align*}
		0 &\ge \int_\delta^s \int_{\R^{2n}}[A_{t,x,v}^mu] (u_k^+)^{p-1} \dx (t,x,v) =\int_\delta^s \int_{\R^{2n}} (\partial_t u +v \cdot \nabla_x u)(u_k^+)^{p-1} \dx (t,x,v) \\
		&= \int_\delta^s \int_{\R^{2n}} \partial_t \Gamma u (\Gamma u_k^+)^{p-1} \dx (t,x,v) = \int_\delta^s \int_{\R^{2n}} \partial_t (\Gamma u_k^+)^p  \dx (t,x,v).
	\end{align*}
	and conclude
	\begin{equation*}
		\norm{u_k^+(s)}_{p}^p \le \norm{u_k^+(\delta)}_{p}^p.
	\end{equation*}
	As $u_k^+(0) =0$ and since $u_k^+ \in C([0,T];L^p(\R^{2n}))$ we deduce $u(s,x,v) \le \sup_{\R^{2n}} u_0$. The lower bound follows by applying the already proven result for the upper bound to $-u$. 
\end{proof}

Next, we need a result on the H\"older-continuity of solutions, proven in \cite{silvimberboltz}. We define kinetic cylinders as
\begin{equation*}
	Q_r(z_0) = \left\{ (t,x,v) \in [0,T] \times \R^{2n} \colon \abs{x-x_0-(t-t_0)v_0} < r, \abs{v-v_0}<r, t \in (t_0-r^2,t_0] \right\},
\end{equation*}
which are used to define a kinetic version of H\"older continuity. We say that $u \in C^\alpha_{\mathrm{kin}}(Q_{r_1}(z_0))$ if for all $(t,x,v),(s,y,w) \in Q_{r_0}(z_0)$ we have
\begin{equation*}
	\abs{u(t,x,v)-u(s,y,w)} \le C \left( \abs{x-y-(t-s)v}^\frac{1}{3}+\abs{v-w}+\abs{t-s}^\frac{1}{2} \right)^\alpha 
\end{equation*}
for a constant $C>0$. Note that if $u \in C^\alpha_{\mathrm{kin}}$, then $u(t,x+tv,v) \in C^{\alpha/2}_t \cap C^{\alpha/3}_x\cap C^\alpha_v$. This is a remarkable observation as this is exactly the condition we need in our maximal regularity results concerning operators with variable coefficients. This observation is also the key point, when proving long time existence. 

\begin{theorem} \label{thm:apriori}
	Let $T>0$, $U_x,U_v \subset \R^n$ open and $m = m(t,x,v,h) \in L^\infty([0,T] \times U_x \times U_v \times \R^n)$ be symmetric in $h$ with $\lambda \le m \le K$ for some constants $0 < \lambda < K$. Let $u \in L^\infty([0,T] \times U_x \times U_v) \cap L^2([0,T];H_v^{\beta/2}(U_x \times U_v)) \cap C([0,T]; L^2(U_x \times U_v))$ with $\partial_t u + v \cdot \nabla_x u \in L^2([0,T];H_v^{-\beta/2}(U_x \times U_v))$ be a solution of the equation
\begin{equation*}
	\partial_t u + v \cdot \nabla_x u = A^m_{t,x,v}u+h
\end{equation*}
in the distributional sense, where $h \in L^\infty([0,T]\times U_x \times U_v)$. Then, for all $r_0>r_1$, $z_0 \in \R^{2n}$ such that $Q_{r_0}(z_0) \subset [0,T] \times U_x \times U_v$ the function $u$ is H\"older continuous and the estimate
\begin{equation*}
	\norm{u}_{C^\alpha_{\mathrm{kin}}(Q_{r_1}(z_0))} \le C(\lambda,n,r_0,r_1) (\norm{u}_{\infty,Q_{r_0}(z_0)}+ \norm{h}_{\infty,Q_{r_0}(z_0)})
\end{equation*}
is satisfied for some constant $\alpha = \alpha(K,\lambda,n) \in (0,1)$. 
\end{theorem}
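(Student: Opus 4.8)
The statement is a De Giorgi--Nash--Moser type regularity estimate for the kinetic integro-differential equation with merely bounded and measurable density, and the plan is to follow the iteration scheme developed for such equations in \cite{silvimberboltz} (and the analogous local theory). The starting point is the scale and translation invariance of the problem: the class of equations
$\partial_t u + v \cdot \nabla_x u = A^m_{t,x,v} u + h$ with $\lambda \le m \le K$ is preserved, up to modifying $m$ and $h$ in a controlled way, under the kinetic rescaling centred at $z_0$ that sends $Q_{r_0}(z_0)$ to a fixed unit cylinder $Q_1(0)$; the new density still satisfies $\lambda \le \tilde m \le K$, and the new source obeys $\norm{\tilde h}_\infty \le c(r_0,\beta)\norm{h}_\infty$. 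Hence it suffices to prove a decay-of-oscillation estimate on dyadic sub-cylinders of $Q_1(0)$, depending only on $\lambda,K,n$, and then undo the scaling; this is exactly what produces the claimed dependence of $\alpha$ on the ellipticity constants and the dimension, and of $C$ on $\lambda,n$ and the ratio $r_0/r_1$.

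The first ingredient is a Caccioppoli (energy) estimate. Testing the equation against $(u-k)_+\zeta^2$ for a cutoff $\zeta$ adapted to nested kinetic cylinders and using the coercivity of the nonlocal bilinear form — the Stroock--Varopoulos type inequality already invoked in the proof of Proposition \ref{prop:weakmax}, which yields $\int (A^m_{t,x,v}u)(u-k)_+\,\dx v \lesssim -\lambda\,[(u-k)_+^{1/2}]^2$ in the velocity fractional seminorm, up to nonlocal tail terms — one obtains an energy inequality for the truncations $(u-k)_+$, the tails (the contribution of $u(v+h)$ for $h$ far from the cylinder) being absorbed by the assumed $L^\infty$ bound on $u$ and $h$. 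This energy inequality is then upgraded by a gain of integrability: the kinetic averaging/velocity-averaging lemma combines the transport operator $\partial_t + v\cdot\nabla_x$ with fractional ellipticity in $v$ to improve $L^2_t H^{\beta/2}_v$ control into $L^q$ control in all variables for some $q>2$. Iterating over shrinking cylinders gives a local boundedness statement, $\norm{u}_{L^\infty(Q_{1/2})} \lesssim \norm{u}_{L^2(Q_1)} + \norm{h}_{L^\infty(Q_1)}$.

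The decisive ingredient is then a kinetic intermediate value lemma: if $u \le 1$ on $Q_1$, $u$ is bounded below on a slightly larger cylinder, and $\abs{\{u \le 0\}\cap Q_1}$ is a fixed fraction of $\abs{Q_1}$, then $\sup_{Q_{1/2}} u \le 1-\theta$ for some $\theta = \theta(\lambda,K,n)>0$; the symmetric statement holds from below. Its proof runs an energy iteration between the level sets $\{u \ge 1 - 2^{-j}\}$, the subtle point being that the diffusion regularizes only in $v$, so one must exploit the transport term and the tilted geometry of the kinetic cylinders (the shift of $x$ by $(t-t_0)v_0$) to propagate the ``good set'' also in the $x$ and $t$ directions — which is where the averaging lemma re-enters. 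Applying this dichotomy to $\pm u$, rescaled to the correct amplitude on each dyadic cylinder $Q_{2^{-j}}(z_0)$, yields $\operatorname{osc}_{Q_{2^{-j}}(z_0)} u \le (1-\theta)^j\big(\operatorname{osc}_{Q_{r_0}(z_0)} u + \norm{h}_{\infty,Q_{r_0}(z_0)}\big)$, which is the asserted Hölder estimate on $Q_{r_1}(z_0)$ with $\alpha = \log_2\!\big(1/(1-\theta)\big)$.

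The main obstacle, compared with the local second-order De Giorgi theory, is the interaction of nonlocality with the kinetic transport. The operator $A^m_{t,x,v}$ does not localize, so every truncation step generates tail terms coupling $u$ inside and far outside the working cylinder, and these must be dominated uniformly by the a priori $L^\infty$ bounds; moreover the first-order term $v\cdot\nabla_x u$ is not sign-definite and mixes the $x$ and $v$ directions, so neither the energy estimate nor the intermediate value lemma can be localized in $x$ without a quantitative averaging lemma in the fractional kinetic setting. Matching the gain of that lemma against the losses in the energy iteration is the technical heart of the argument and is carried out in \cite{silvimberboltz}; the only adaptations needed here are bookkeeping the inhomogeneity $h$ and the fact that the estimate is interior on $[0,T]\times U_x\times U_v$, both of which are routine.
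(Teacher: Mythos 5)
Your proposal follows essentially the same route as the paper: the entire result is delegated to the De Giorgi--Nash--Moser type theory for kinetic integro-differential equations developed by Imbert and Silvestre. The difference is one of presentation rather than substance. The paper's proof is a three-line citation of \cite[Theorem~1.5]{silvimberboltz} --- it merely observes that the kernel $m(t,x,v,h)\abs{h}^{-n-\beta}$ satisfies the structural assumptions of that reference and then converts the oscillation-decay estimate $\mathrm{osc}_{Q_r(z_1)}u \le C r^\alpha$ (for \emph{every} center $z_1$ in the smaller cylinder) into the kinetic H\"older bound. You instead unpack the interior of the cited argument (scale invariance under the kinetic zoom, Caccioppoli estimate, gain of integrability via velocity averaging, intermediate value lemma, dyadic oscillation decay); this is a faithful account of \cite{silvimberboltz}, but it is not something the paper reproves. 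One point you pass over too quickly: the oscillation decay on dyadic cylinders around the single point $z_0$ does not directly yield $\norm{u}_{C^\alpha_{\mathrm{kin}}(Q_{r_1}(z_0))}$, because the kinetic seminorm requires comparing $u(z)$ with $u(z')$ for arbitrary $z,z'\in Q_{r_1}(z_0)$. The paper handles this explicitly: it invokes the decay around every center $z_1\in Q_{r_1}(z_0)$, obtains the pointwise bound when the kinetic distance $r = \abs{t-t_0}^{1/2}+\abs{x-x_0-(t-t_0)v}^{1/3}+\abs{v-v_0}$ is smaller than $r_1/3$, and for larger separations chains finitely many overlapping cylinders. Your $\alpha = \log_2\!\big(1/(1-\theta)\big)$ and the dyadic iteration are right, but you should add the ``decay at every center plus chaining'' step to close the argument.
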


\begin{proof}
	This result is essentially proven in \cite[Theorem 1.5]{silvimberboltz}. First note that the very regular kernel considered here satisfies all the assumptions made in \cite{silvimberboltz}. The authors prove that for all $z_1 \in Q_{r_1}(z_0)$ and all $r>0$ such that $Q_{2r}(z_1) \subset Q_{\frac{r_0+r_1}{2}}(z_0)$ we have
	\begin{equation*}
		\mathrm{osc}_{Q_{r}(z_1)}f \le Cr^\alpha
	\end{equation*}	
	for $\alpha = \alpha(K,\lambda,n)$ and $C = C(\lambda,n,r_0,r_1)$. Let $z = (t,x,v) \in Q_{r_1}(z_0)$ with $r = \abs{t-t_0}^\frac{1}{2}+\abs{x-x_0-(t-t_0)v}^\frac{1}{3}+\abs{v-v_0}$. If $r < r_1/3$, then 
	\begin{equation*}
		\abs{u(z)-u(z_0)}\le \mathrm{osc}_{Q_{r}(z_1)}f \le Cr^\alpha = C\left( \abs{x-x_0-(t-t_0)v_0}^\frac{1}{3}+\abs{v-v_0}+\abs{t-t_0}^\frac{1}{2} \right)^\alpha. 
	\end{equation*}
	In the other case we just need to connect the two points by finitely many cylinders to obtain the desired estimate. 
\end{proof}

We choose $V_{\mu} = \{ u \in X_{\gamma,\mu} \colon u > -1/2 \norm{\mu}_1^{-1} \}$. For every $g \in V_\mu$ the frozen operator $A(g)$ admits kinetic maximal $L^p_\mu$-regularity as proven above. Let $g \in V_\mu$ with $g \ge 0$, we want to show that the solution exists for all times. By a standard argument we can extend the solution up to a maximal interval of existence $[0,T_{\mathrm{max}})$. 

By the maximum principle it follows that $u \ge 0$ for all $t \in [0,T_{\mathrm{max}})$. Consequently, 
\begin{equation*}
	\lim\limits_{t \to T_{\mathrm{max}}} \mathrm{dist}(u(t),\partial V_\mu)\ge \frac{C}{2}\norm{\mu}_1^{-1}>0.
\end{equation*}

We are going to show that $\norm{u}_{\BE_\mu(0,\tau)}$ stays bounded as $\tau \to T_{\mathrm{max}}$. For any $\tau \in [0,T_{\mathrm{max}})$ we deduce that $u$ solves the linear problem 
\begin{equation*}
	\begin{cases}
		\partial_t u + v \cdot \nabla_x u = A_{t,x}^b u \\
		u(0) = g
	\end{cases}
\end{equation*} 
with $b(t,x) = m(u)(t,x)$ on $[0,\tau]$. The maximum principle allows to control the $L^\infty$-norm of $u$ in terms of $\norm{g}_{X_{\gamma,\mu}}$. Applying Theorem \ref{thm:apriori}, we deduce $u \in C^\alpha_{\mathrm{kin}}([0,\tau] \times \R^{2n})$ with 
\begin{equation*}
	\norm{u}_{C^\alpha_{\mathrm{kin}}} \le C \norm{g}_{X_{\gamma,\mu}},
\end{equation*}
 where the constant $C$ is independent of $\tau$. Hence, $A_{t,x}^b  $ admits maximal $L^p_\mu$-regularity on $[0,\tau]$ and
\begin{equation*}
	\norm{u}_{\BE_\mu(0,\tau)}\le C \norm{g}_{X_{\gamma,\mu}},
\end{equation*}
where the constant does not depend on $\tau$. Consequently, $\norm{u}_Z$ stays bounded as $\tau \to T_{\mathrm{max}}$. This implies $T_{\mathrm{max}} = \infty$ by an argument similar to \cite{niebel_kinetic_nodate-1}[Lemma 9.2 and Corollary 9.3].

\begin{remark}
	The same argument, using however the a priori estimates from \cite{golse_harnack_2016}, allows to prove long-time existence of solutions to the quasilinear equation considered in \cite[Section 10]{niebel_kinetic_nodate-1} for initial values with sufficiently small $L^\infty$-norm. This restriction is due to the fact that when considering the linear reference problem we have to estimate the $L^{2p}$-norm of $\nabla_v u$, which needs to be absorbed by the ${\BE_\mu(0,\tau)}$ norm of $u$. This can be done with help of the Gagliardo-Nirenberg inequality if $\norm{g}_{X_{\gamma,\mu}}$ is sufficiently small. 
\end{remark}

\appendix
\section{Appendix}

Let $m \in L^\infty(\R^{2n})$ we define 
\begin{equation*}
	[R^m_vu](v) = \pv \int_{\R^n} \frac{u(v+h)-u(v)}{\abs{h}^{\beta+n}} m(v,h) \dx h. 
\end{equation*}

\begin{lemma} \label{lem:singintest} 
Let $p \in (1,\infty)$, $\beta \in (0,2)$ and
 $m = m(h) \in L^\infty(\R^{n})$ symmetric in $h$ with $K = \norm{m}_\infty $. Then, there exists a constant $C=C(\beta,n,p)>0$ such that 
 \begin{equation*}
 	\norm{R^m u}_{L^p(\R^{n})} \le CK \norm{D_v^\beta u}_{L^p(\R^{n})}.
 \end{equation*}
 for all $u \in H^{\beta,p}(\R^{n})$.
 \end{lemma}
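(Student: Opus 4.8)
The plan is to reduce the statement to a Fourier-multiplier estimate in the velocity variable. Since $m = m(h)$ depends only on $h$ and is symmetric, the operator $R^m_v$ is translation invariant in $v$, so it is given by a Fourier multiplier. Concretely, writing $\widehat{R^m u}(\xi) = \sigma(\xi)\widehat u(\xi)$, the symbol is
\begin{equation*}
	\sigma(\xi) = \pv\int_{\R^n} \bigl(e^{i\langle h,\xi\rangle} - 1\bigr)\frac{m(h)}{\abs h^{n+\beta}}\,\dx h
	= \int_{\R^n}\bigl(\cos\langle h,\xi\rangle - 1\bigr)\frac{m(h)}{\abs h^{n+\beta}}\,\dx h,
\end{equation*}
where the second equality uses the symmetry of $m$ in $h$ (the odd part $i\sin\langle h,\xi\rangle$ integrates to zero). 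The plan is then to show that $\sigma(\xi)/\abs\xi^\beta$ is a bounded function satisfying the H\"ormander--Mikhlin condition, so that the claimed bound $\norm{R^m u}_{L^p} \le CK\norm{D_v^\beta u}_{L^p}$ follows from the $L^p$-boundedness of $R^m (D_v^\beta)^{-1}$, which has symbol $\sigma(\xi)\abs\xi^{-\beta}$.

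First I would establish the pointwise bound $\abs{\sigma(\xi)} \le C(\beta,n)K\abs\xi^\beta$. This is the standard Lévy-symbol estimate: split the integral at $\abs h = 1/\abs\xi$; on the region $\abs h \le 1/\abs\xi$ use $\abs{\cos\langle h,\xi\rangle - 1} \le \tfrac12\abs{\langle h,\xi\rangle}^2 \le \tfrac12\abs h^2\abs\xi^2$ to bound that part by $CK\abs\xi^2\int_{\abs h\le 1/\abs\xi}\abs h^{2-n-\beta}\dx h = CK\abs\xi^\beta$; on the region $\abs h > 1/\abs\xi$ use $\abs{\cos\langle h,\xi\rangle -1}\le 2$ to bound it by $CK\int_{\abs h > 1/\abs\xi}\abs h^{-n-\beta}\dx h = CK\abs\xi^\beta$. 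Next I would differentiate $\sigma$ in $\xi$ and check the Mikhlin-type bounds $\abs{\partial_\xi^\gamma(\sigma(\xi)\abs\xi^{-\beta})} \le C_\gamma K\abs\xi^{-\abs\gamma}$ for multi-indices up to order $\lfloor n/2\rfloor + 1$; each $\xi$-derivative of $\cos\langle h,\xi\rangle - 1$ brings down a factor of size $\lesssim\min\{\abs h^2\abs\xi,\abs h\}$ (interpolating the two crude bounds $\abs h$ and $\abs h^2\abs\xi$ near the cutoff), and repeating the split-at-$1/\abs\xi$ argument shows $\abs{\partial_\xi^\gamma\sigma(\xi)}\le C_\gamma K\abs\xi^{\beta-\abs\gamma}$; combining with the smoothness of $\abs\xi^{-\beta}$ away from the origin via the Leibniz rule gives the desired multiplier bounds. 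Then the Mikhlin multiplier theorem yields that the operator with symbol $\sigma(\xi)\abs\xi^{-\beta}$ is bounded on $L^p(\R^n)$ with norm $\le C(\beta,n,p)K$, and applying it to $D_v^\beta u$ gives the claim.

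Alternatively, and perhaps more in the spirit of the cited references, one can avoid the Fourier side entirely and appeal to the Calder\'on--Zygmund / Coifman--Meyer machinery for operators of this type directly: $R^m$ is a convolution-type singular integral with kernel $m(h)\abs h^{-n-\beta}$, and $R^m(D_v^\beta)^{-1}$ has a Calder\'on--Zygmund kernel whose constants are controlled by $K$; this also reduces to the same symbol estimates. I expect the only slightly delicate point to be bookkeeping the $\xi$-derivatives of $\sigma$ uniformly in $\abs\xi$ — in particular checking that each derivative genuinely gains a full power of $\abs\xi^{-1}$ with a constant proportional to $K$ and independent of $m$ otherwise — but this is routine once the split-at-scale-$1/\abs\xi$ bookkeeping is set up, and no cancellation beyond the symmetry-induced vanishing of the imaginary part is needed.
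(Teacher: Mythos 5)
The paper disposes of this lemma by a one-line citation to \cite[Lemma 4]{mikunew}, so the real question is whether your Fourier-multiplier argument is a correct self-contained substitute. It is not: there is a genuine gap in the derivative estimates, and the obstruction is not merely bookkeeping.

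The first problem is that differentiating $\sigma$ under the integral is not justified in the absolutely convergent sense you use. You claim each $\xi$-derivative brings down a factor $\lesssim\min\{\abs h^2\abs\xi,\abs h\}$ and that splitting at $\abs h = 1/\abs\xi$ then gives $\abs{\partial^\gamma_\xi\sigma(\xi)}\lesssim K\abs\xi^{\beta-\abs\gamma}$. But already for $\abs\gamma=1$ the outer region produces $\int_{\abs h>1/\abs\xi}\abs h\cdot\abs h^{-n-\beta}\,\dx h = c\int_{1/\abs\xi}^\infty r^{-\beta}\,\dx r$, which diverges for every $\beta\le 1$; for $\abs\gamma=2$ it diverges for every $\beta\le 2$, i.e.\ always. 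The integral can still be made sense of as an oscillatory integral when $m\equiv 1$, because one can integrate by parts in $h$; but for a generic bounded measurable $m$ there is no structure to integrate by parts against, and in fact the claimed bound is false.

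Here is a concrete counterexample to the Mikhlin/H\"ormander symbol condition. Fix $\xi_0\neq 0$ and take $m(h)=1+\tfrac12\cos\langle h,\xi_0\rangle$, which is symmetric, measurable and satisfies $\tfrac12\le m\le\tfrac32$. Using the product-to-sum identity one finds
\begin{equation*}
\sigma(\xi) \;=\; -c_\beta\abs\xi^\beta \;-\; \tfrac{c_\beta}{8}\Bigl(\abs{\xi+\xi_0}^\beta + \abs{\xi-\xi_0}^\beta - 2\abs{\xi_0}^\beta\Bigr),
\end{equation*}
so $\sigma$ has cusps of order $\abs{\xi\mp\xi_0}^\beta$ at $\xi=\pm\xi_0$. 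Consequently $\partial^\gamma\sigma$ behaves like $\abs{\xi-\xi_0}^{\beta-\abs\gamma}$ near $\xi_0$, which is unbounded for $\abs\gamma\ge 1$, and the quantity $\abs\xi^{\abs\gamma}\abs{\partial^\gamma(\sigma(\xi)\abs\xi^{-\beta})}$ blows up at $\xi_0$. So neither the pointwise Mikhlin condition nor H\"ormander's integral variant holds once $\abs\gamma>\beta$, yet the conclusion of the lemma is still true for this $m$. This shows that the lemma cannot be proved by verifying Mikhlin-type symbol bounds.

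Your ``alternative'' remark that a Calder\'on--Zygmund argument ``also reduces to the same symbol estimates'' is precisely where the misconception lies. The H\"ormander \emph{kernel} condition $\int_{\abs v>2\abs w}\abs{K(v-w)-K(v)}\,\dx v\le C$ for the kernel of $R^m D_v^{-\beta}$ is strictly weaker than pointwise derivative bounds on the symbol, and it is exactly this extra flexibility that the reference \cite{mikunew} exploits (and, for that matter, that the analogous estimates in \cite{chen_lp-maximal_2018,zhanglpmax2013} exploit). A correct proof composes $R^m$ with the Riesz potential $D_v^{-\beta}$, writes the kernel as a convolution of $m(h)\abs h^{-n-\beta}\,\dx h$ against the Riesz kernel, and verifies the CZ/H\"ormander kernel estimate directly using only $\norm m_\infty\le K$ and the symmetry of $m$; no smoothness of $m$ in $h$ is assumed or needed. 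That argument is not a cosmetic rewrite of the Mikhlin computation — it succeeds exactly in the regime where the Mikhlin computation fails.
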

 
 \begin{proof}
 	The proof can be found in \cite[Lemma 4]{mikunew}. The extra condition needed in the case $\beta = 1$ is satisfied due to the symmetry of $m$ in $h$.
 \end{proof}

\begin{lemma} \label{lem:domofA}
	Let $\beta \in (0,2)$, $0 < \alpha < \alpha_0 < 1$ and $p > n/\alpha$. Let $m \in L^\infty(\R^{2n})$ with $\lambda \le m(v,h) \le K $ for some constants $0 < \lambda < K$. If $m$ is symmetric in $h$ and 
	\begin{equation*}
		\sup_{v,v',h \in \R^n} \frac{\abs{m(v,h)-m(v',h)}}{\abs{v-v'}^{\alpha_0}} < \infty,
	\end{equation*}
	then for some constant $C = C(\alpha,\alpha_0,\beta,\lambda,K,n,p)$ we have 
	\begin{equation*}
		\norm{u}_{H^{\beta,p}(\R^n)} \le C \left(  \norm{u}_{L^p(\R^{n})} +\norm{R^m_v u}_{L^p(\R^{n})} \right). 
	\end{equation*}
\end{lemma}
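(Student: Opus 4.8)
The plan is to deduce this stationary (``elliptic'') estimate from the \emph{parabolic} maximal $L^p$-regularity for the non-local heat equation associated with $R^m_v$, i.e.\ exactly the result of \cite{mikunew} recalled in the introduction, via the standard tensorization trick. First I would check that the hypotheses of that result hold: $m=m(v,h)$ is bounded by $K$, elliptic with constant $\lambda$, symmetric in $h$ and $\alpha_0$-H\"older in $v$ uniformly in $h$, while $p>n/\alpha$ with $\alpha<\alpha_0<1$. Specializing \cite{mikunew} to the $t$-independent case then gives: for every $f\in L^p((0,1);L^p(\R^n))$ the Cauchy problem $\partial_t U=R^m_v U+f$ on $(0,1)$, $U(0)=0$, has a unique solution $U\in H^{1,p}((0,1);L^p(\R^n))\cap L^p((0,1);H^{\beta,p}(\R^n))$, and
\begin{equation*}
	\norm{U}_{L^p((0,1);H^{\beta,p}(\R^n))}\le C_1\norm{f}_{L^p((0,1);L^p(\R^n))},
\end{equation*}
where $C_1$ depends only on $\alpha,\alpha_0,\beta,\lambda,K,n,p$ and the $\alpha_0$-H\"older seminorm of $m$.

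Next I would fix once and for all a cutoff $\psi\in C^\infty_c((0,1))$ with $\psi\ge 0$, $\psi\not\equiv 0$, and for a given $u\in H^{\beta,p}(\R^n)$ set $U(t,v):=\psi(t)\,u(v)$. Since $R^m_v u\in L^p(\R^n)$ (by the elementary bound $\norm{R^m_v u}_{L^p}\le C\norm{u}_{H^{\beta,p}}$, e.g.\ \cite[Corollary~3]{mikunew}, the same bound used in the proof of Lemma \ref{lem:domAm}), one has $U\in H^{1,p}((0,1);L^p)\cap L^p((0,1);H^{\beta,p})$ with $U(0)=0$, and, because $R^m_v$ acts only in the $v$-variable,
\begin{equation*}
	\partial_t U-R^m_v U=\psi'(t)\,u-\psi(t)\,R^m_v u=:f\in L^p((0,1);L^p(\R^n)).
\end{equation*}
By the uniqueness part above, $U$ is the solution furnished by the parabolic estimate, so, using $\norm{U}_{L^p((0,1);H^{\beta,p})}=\norm{\psi}_{L^p(0,1)}\norm{u}_{H^{\beta,p}}$ together with $\norm{f}_{L^p((0,1);L^p)}\le\norm{\psi'}_{L^p(0,1)}\norm{u}_{L^p}+\norm{\psi}_{L^p(0,1)}\norm{R^m_v u}_{L^p}$, division by $\norm{\psi}_{L^p(0,1)}$ yields
\begin{equation*}
	\norm{u}_{H^{\beta,p}(\R^n)}\le C\bigl(\norm{u}_{L^p(\R^n)}+\norm{R^m_v u}_{L^p(\R^n)}\bigr),\qquad C:=C_1\max\Bigl\{\tfrac{\norm{\psi'}_{L^p(0,1)}}{\norm{\psi}_{L^p(0,1)}},\,1\Bigr\},
\end{equation*}
which is the asserted estimate for $u\in H^{\beta,p}(\R^n)$. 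Being a priori, it extends to every $u\in L^p(\R^n)$ with $R^m_v u\in L^p(\R^n)$ by a routine mollification, which is the form in which it is applied in the proof of Lemma \ref{lem:domAm}.

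I do not expect a serious obstacle in \emph{this} lemma: all of the genuine analysis — handling the variable, merely $\alpha_0$-H\"older coefficient and the appearance of the constraint $p>n/\alpha$ — is contained in the cited parabolic maximal regularity of \cite{mikunew}, so what remains is essentially the bookkeeping of the tensorization step. The only points needing a little care are verifying that $U=\psi\otimes u$ genuinely lies in the maximal regularity class (so that the uniqueness assertion may be invoked) and tracking the constants through the division by $\norm{\psi}_{L^p(0,1)}$; both are immediate. Should one prefer a self-contained argument, one could instead estimate the resolvent $(\lambda-R^m_v)^{-1}$ for large $\lambda$ by a freezing-the-coefficients argument in the spirit of \cite{mikuold}, comparing $R^m_v$ on small balls with the frozen operator $R^{m(v_0,\cdot)}_v$ and summing over a partition of unity, but this would merely reprove a weaker form of \cite{mikunew} and is not needed here.
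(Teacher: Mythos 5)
Your proof is correct and takes essentially the same approach as the paper: both arguments tensorize $u$ with a temporal cutoff $\psi$, apply the parabolic maximal $L^p$-regularity of \cite{mikunew} to $U(t,v)=\psi(t)u(v)$, and divide out $\norm{\psi}_{L^p}$. The only (cosmetic) difference is your choice $\psi\in C^\infty_c((0,1))$, which forces $U(0)=0$ and so matches the zero-initial-data formulation of the cited parabolic result cleanly, whereas the paper takes $\psi\equiv 1$ on $[0,1]$, so that $U(0)=u\neq 0$.
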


\begin{proof} 
	Let $u \in C_c^\infty(\R^n)$ and $\psi \colon \R \to [0,\infty)$ be a smooth function such that $\psi  = 1$ in $[0,1]$ and $\psi = 0$ in $(2,\infty)$. We define $w(t,v) = u(v)\psi(t)$ and calculate
	\begin{equation*}
		\partial_t w(t,v) - [R_v^m w](t,v) = \psi'(t) u(v) + \psi(t) [R_v^m u ](v).
	\end{equation*}
	By \cite[Theorem 1]{mikunew} for $T = 2$ we have 
	\begin{align*}
		&\int_0^2 \psi^p(t) \dx t \int_{\R^n}\abs{u(v)}^p + \abs{D_v^\beta u(v)}^p \dx v =\int_0^2 \int_{\R^n} \abs{w(t,v)}^p + \abs{D_v^\beta w(t,v)}^p \dx v \dx t \\
		&\le C(\alpha,\beta,n,K,\alpha_0,\lambda) \int_{0}^2 \int_{\R^n} \abs{\psi'(t)u(v)}^p + \abs{\psi(t) [R_v^m u](v)}^p \dx v \dx t. 
	\end{align*}
	Indeed, due to our assumptions on $m$ the Assumption A of \cite{mikunew} is satisfied. As $\int_0^2 \psi(t)^p \dx t>0$ and $ \int_0^2 \abs{\psi'(t)}^p \dx t>0$ we conclude
	\begin{equation*}
		\norm{u}_{H^{\beta,p}(\R^n)} \le C \left( \norm{u}_{L^p(\R^n)} + \norm{R_v^m u}_{L^p(\R^n)} \right).
	\end{equation*}
\end{proof}

	\begin{lemma} \label{lem:fromabove}
		Let $\alpha \in (0,1)$ and $p>n/\alpha$. If $m \in L^\infty(\R^{2n})$, with 
		\begin{equation*}
			\abs{m(v,h)}+ \abs{D_v^\alpha m(v,h)} \le K,
		\end{equation*}
		then 
		\begin{equation*}
			\norm{R_v^m u}_{L^p(\R^n)}^p \le CK^p \norm{u}_{H^{\beta,p}(\R^n)}
		\end{equation*}
		for some constant $C = C(\alpha,\beta,n,p)$.
	\end{lemma}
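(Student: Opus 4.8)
The plan is to adapt the freezing-the-coefficient scheme of \cite{mikunew}, in the degenerate situation where there is neither a time variable nor a kinetic transport term, running it exactly along the lines of the first part of the proof of Lemma \ref{lem:keypart}. First I would split $R_v^m = R_v^{m,<}+R_v^{m,\ge}$ according to $\abs{h}<1$ and $\abs{h}\ge 1$. The far part is harmless: Minkowski's integral inequality gives $\norm{R_v^{m,\ge}u}_{L^p(\R^n)}\le 2K\norm{u}_{L^p(\R^n)}\int_{\abs{h}\ge1}\abs{h}^{-n-\beta}\dx h=CK\norm{u}_{L^p(\R^n)}$, which is controlled by $\norm{u}_{H^{\beta,p}(\R^n)}$, and it uses none of the regularity of $m$.

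The core estimate concerns the singular part. Freeze the first argument of the kernel: for fixed $v$ set $F(v,w):=\pv\int_{\abs{h}<1}\frac{u(v+h)-u(v)}{\abs{h}^{n+\beta}}m(w,h)\dx h$, so that $R_v^{m,<}u(v)=F(v,v)$. Since $p>n/\alpha$, the Sobolev embedding $W^{\alpha,p}\hookrightarrow C^0$ in the $w$-variable bounds $\abs{F(v,v)}^p$ by an average in $w$ of $\abs{F(v,w)}^p$ and of $\abs{D_w^\alpha F(v,w)}^p$; moreover $D_w^\alpha$ commutes with the $h$-integral and only hits the coefficient, so $D_w^\alpha F(v,w)=\pv\int_{\abs{h}<1}\frac{u(v+h)-u(v)}{\abs{h}^{n+\beta}}[D_w^\alpha m](w,h)\dx h$. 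For each fixed $w$ the functions $m(w,\cdot)$ and $D_w^\alpha m(w,\cdot)$ are bounded by $K$ in $L^\infty$, so after integrating in $v$ the constant-coefficient singular integral estimate of Lemma \ref{lem:singintest} applies to both and yields $\norm{F(\cdot,w)}_{L^p(\R^n)}+\norm{D_w^\alpha F(\cdot,w)}_{L^p(\R^n)}\le CK\norm{D_v^\beta u}_{L^p(\R^n)}$.

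The point where care is needed --- and the main obstacle --- is that Lemma \ref{lem:singintest} governs a nonlocal (Fourier-multiplier) operator, so its right-hand side is the \emph{global} norm $\norm{D_v^\beta u}_{L^p(\R^n)}$; running the scheme above naively, with $w$ ranging over all of $\R^n$, therefore produces a divergent constant. I would resolve this exactly as in Section \ref{sec:proofmain}: interpose a partition of unity $(\psi_k)_k$ in $v$ with bounded overlap and derivative bounds uniform in $k$, consider $\psi_k R_v^{m,<}u$, and move the cutoffs through the operator and through $D_v^\beta$ by the product rules $A^m(\psi_k u)=\psi_k A^m u+\A^m(\psi_k,u)+uA^m\psi_k$ and $D_v^\beta(\psi_k u)=\psi_k D_v^\beta u+\mathcal D_v^\beta(\psi_k,u)+uD_v^\beta\psi_k$, estimating the commutator and remainder terms by the arguments of Lemma \ref{lem:sumlower} and Lemma \ref{lem:sumcomm}; the $C^\alpha$-regularity of $m$ in $v$, which follows from $\norm{m}_\infty,\norm{D_v^\alpha m}_\infty\le K$ via properties of the Riesz potential since $\alpha\in(0,1)$, is what controls the commutator generated when the cutoff is moved past $D_v^\alpha$. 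Because $\sum_k\abs{\psi_k}^p\le1$, the leading terms assemble as $\sum_k\norm{\psi_k D_v^\beta u}_{L^p}^p\le\norm{D_v^\beta u}_{L^p}^p$, while the commutator terms contribute at most $\epsilon\norm{D_v^\beta u}_{L^p}^p+C\norm{u}_{L^p}^p$. Collecting the far part, the frozen part and these remainders and fixing $\epsilon$ harmlessly yields
\begin{equation*}
	\norm{R_v^m u}_{L^p(\R^n)}^p\le C(\alpha,\beta,n,p)K^p\bigl(\norm{D_v^\beta u}_{L^p(\R^n)}^p+\norm{u}_{L^p(\R^n)}^p\bigr)\le C(\alpha,\beta,n,p)K^p\norm{u}_{H^{\beta,p}(\R^n)}^p,
\end{equation*}
which is the claim.
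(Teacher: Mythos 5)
The paper proves this lemma by a one-line citation to \cite[Corollary 2]{mikunew}, so yours is an independent reconstruction; the architecture you choose — Sobolev embedding in an auxiliary variable $w$ to pass from the diagonal $F(v,v)$ to $L^p_w$-averages of $F$ and $D_w^\alpha F$, followed by the constant-coefficient estimate of Lemma \ref{lem:singintest} — is exactly that of the first half of the proof of Lemma \ref{lem:keypart}, and you correctly identify the obstruction: the $w$-integral of $\sup_h\left(\abs{m(w,h)}^p+\abs{D_w^\alpha m(w,h)}^p\right)$ diverges when $m$ is merely bounded. But the localization you propose does not remove this divergence. Pushing $\psi_k$ onto the argument via $\psi_k A^m u = A^m(\psi_k u)-\A^m(\psi_k,u)-uA^m(\psi_k)$ localizes $u$, not the kernel density: the main term $A^m(\psi_k u)$ still carries the globally supported coefficient $m(\cdot,h)$, so the Sobolev-in-$w$ step applied to it reproduces the same infinite integral. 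What is actually needed — and what Lemma \ref{lem:sumfrozen} does — is a second cutoff $\tilde\psi_k$ with $\tilde\psi_k\equiv1$ on $\supp\psi_k$ together with the \emph{other} commutation identity $\tilde\psi_k A^m=A^{\tilde\psi_k m}$: writing $\psi_k A^m u=\tilde\psi_k\psi_k A^m u$ and applying this identity replaces $m$ by the compactly supported $\tilde\psi_k m$, so that $\int_{\R^n}\sup_h\left(\abs{\tilde\psi_k m}^p+\abs{D_w^\alpha(\tilde\psi_k m)}^p\right)\dx w$ is finite and uniform in $k$ while $D_v^\beta$ lands on $\psi_k u$; only then does your bookkeeping $\sum_k\norm{\psi_k D_v^\beta u}_{L^p}^p\lesssim\norm{D_v^\beta u}_{L^p}^p$ kick in.

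A secondary inaccuracy: you invoke $C^\alpha$-regularity of $m$ as a consequence of $\norm{m}_\infty,\norm{D_v^\alpha m}_\infty\le K$. This inference is shaky, and even granting it, the estimates of Lemma \ref{lem:keypart} (see the term $S_2$, whose integral $\int_{\abs{j}\le\rho}\abs{j}^{-n-\alpha+\alpha_0}\dx j$ must converge) require H\"older regularity with exponent $\alpha_0$ \emph{strictly} larger than $\alpha$, which certainly would not follow. Fortunately no H\"older continuity of $m$ is needed here: once the coefficient is localized as above, $D_w^\alpha(\tilde\psi_k m)$ can be treated directly via the nonlocal Leibniz decomposition into $\tilde\psi_k D_w^\alpha m$, $m D_w^\alpha\tilde\psi_k$, and a bilinear remainder, each of which is controlled using only $\norm{m}_\infty,\norm{D_w^\alpha m}_\infty\le K$, the smoothness and compact support of $\tilde\psi_k$, and the decay $\abs{D_w^\alpha\tilde\psi_k(w)}\lesssim(1+\abs{w})^{-n-\alpha}$. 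With these two repairs your outline would become a correct self-contained proof.
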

	
	\begin{proof}
		\cite[Corollary 2]{mikunew}.
	\end{proof}

\bibliographystyle{amsplain}

\providecommand{\bysame}{\leavevmode\hbox to3em{\hrulefill}\thinspace}
\providecommand{\MR}{\relax\ifhmode\unskip\space\fi MR }
                          
\providecommand{\MRhref}[2]{%
  \href{http://www.ams.org/mathscinet-getitem?mr=#1}{#2}
}
\providecommand{\href}[2]{#2}

\end{document}